\documentclass[11pt]{article}
\usepackage[top=1.1in, left=1in, right=1in, bottom=1.1in]{geometry}
\usepackage{authblk}
\usepackage{bbm}
\usepackage[dvipsnames]{xcolor}
\usepackage{soul}
\usepackage{subcaption}
\usepackage{amssymb}
\usepackage{amsthm}
\usepackage{amsmath}
\usepackage{multicol}
\usepackage{mathrsfs}
\usepackage{graphicx}
\usepackage{enumerate}
\usepackage{enumitem}
\usepackage{bm}
\usepackage{setspace}
\usepackage{verbatim}
\usepackage[export]{adjustbox}
\usepackage{multicol}
\usepackage{bbm}
\usepackage{lipsum}
\usepackage[ruled, vlined, linesnumbered]{algorithm2e}
\SetKwProg{custom}{}{}{}

\let\oldnl\nl
\newcommand{\nonl}{\renewcommand{\nl}{\let\nl\oldnl}}
\usepackage{bm}
\usepackage{enumitem}
\usepackage[]{algorithm2e}
\usepackage{lipsum}
\usepackage[utf8]{inputenc}
\usepackage{chngcntr}
\usepackage{apptools}
\AtAppendix{\counterwithin{lemma}{section}}

\usepackage{hyperref}
\usepackage{xfrac}
\usepackage{epstopdf}
\usepackage{threeparttable}
\usepackage{booktabs}
\usepackage{mathtools}

\captionsetup{justification=raggedright,singlelinecheck=false}

\SetKw{KwInitialization}{Initialization}
\SetKw{KwTheMainBody}{The main body}

\newtheorem{lemma}{Lemma}[section]

\newtheorem{corollary}{Corollary}[section]
\newtheorem{assumption}{Assumption}
\newtheorem{theorem}{Theorem}[section]

\newtheorem{definition}[theorem]{ Definition}

\newtheorem{remark}{Remark}[section]

\definecolor{red}{rgb}{1,0.2,0.2}



\newcommand{\R}{\mathbb R}

\DeclareMathOperator{\E}{\mathbb{E}}

\def\R{\mathbb{R}}





\usepackage{ulem}
\normalem

\newcommand\Lukesout{\bgroup\markoverwith{\textcolor{blue}{\rule[0.5ex]{2pt}{0.4pt}}}\ULon}

\title{Computing committors in collective variables \\via Mahalanobis diffusion maps}%
\author[1]{Luke Evans\thanks{evansal@umd.edu}}
\author[1]{Maria K. Cameron\thanks{mariakc@umd.edu}}
\author[2]{Pratyush Tiwary\thanks{ptiwary@umd.edu}}
\affil[1]{\small{Department of Mathematics, University of Maryland, College Park, MD 20742, USA}}
\affil[2]{\small{Department of Chemistry and Biochemistry, University of Maryland, College Park, MD 20742, USA}}                               

\begin{document}

\maketitle

\abstract{
The study of rare events in molecular and atomic systems such as conformal changes and 
cluster rearrangements has been one of the most important research themes in chemical physics. 
Key challenges are associated with long waiting times rendering molecular simulations inefficient, 
high dimensionality impeding the use of PDE-based approaches, 
and the complexity or breadth of transition processes limiting the predictive power of asymptotic methods. 
Diffusion maps are promising algorithms to avoid or mitigate all these issues. 
We adapt the diffusion map with Mahalanobis kernel proposed by Singer and Coifman (2008) 
for the SDE describing molecular dynamics in collective variables in which the 
diffusion matrix is position-dependent and, unlike the case considered by Singer and Coifman, 
is not associated with a diffeomorphism. We offer 
an elementary proof showing that one can approximate the generator for this 
SDE discretized to a point cloud via the Mahalanobis diffusion map. 
We use it to calculate the 
committor functions in collective variables for two benchmark systems: alanine dipeptide, and Lennard-Jones-7 in 2D. 
For validating our committor results, we compare our committor functions to the finite-difference solution or
by conducting a ``committor analysis" as used by molecular dynamics practitioners. We contrast the outputs of the 
Mahalanobis diffusion map with those of the standard diffusion map with isotropic kernel and 
show  that the former gives significantly more 
accurate estimates for the committors than the latter.}
\section{Introduction}
\label{sec:intro}
{ \subsection{Motivation}}
\label{sec:motivation}
Molecular simulation commonly deals with high-dimensional systems that reside in stable
states over very large timescales and transition quickly between these states on extremely
small scales. These transitions, rare events such as protein folding or
conformational changes in a molecule, are crucial to molecular simulations
but difficult to characterize due to the timescale gap. 
Transition path theory (TPT) is a mathematical framework for direct study of rare transitions
in stochastic systems, and it is particularly utilized for metastable systems
arising in molecular dynamics (MD)~\cite{weinan2004}. The key function of TPT is
the committor function, a mathematically well-defined reaction coordinate with
which one can compute reaction channels and expected transition times between a
reactant state $A$ and a product state $B$ in the state space. 
However,
the committor is the solution to an elliptic PDE that can be solved
using finite difference or finite element methods only in low dimensions.
As a result, the typical use of
transition path theory for high-dimensional systems consists of finding an estimate for 
a zero-temperature asymptotic transition path and then using techniques like 
umbrella sampling to access the committor~\cite{weinan2006towards,EVE2010}.
While this approach is viable, it relies on the assumption that the transition process is localized to a narrow tube around the found path. 
In practice, the transition process may be broad and complex, and the found asymptotic path may give a poor prediction.

%

One can utilize intrinsic dimensionality of the system that is typically much lower than 3$N_a$, where $N_a$ is the number of atoms, 
and analyze
transition paths in terms of {\it collective variables}~\cite{cameron2013estimation,
2006string}. However, a large number of internal coordinates such as contact distances and dihedral angles 
may be required for a proper representation of a biomolecule~\cite{ravindra2020automatic,rohrdanz2011determination, piana2008advillin,sittel2018perspective}. 
Hence, one still may be unable to leverage traditional mesh-based PDE solvers to even dimensionally-reduced data given either in physics-informed or machine-learned collective variables.

{ \subsection{An overview}}
\label{sec:overview}
Meshless approaches to solving the committor PDE discretize it to point clouds obtained from MD simulations.
The two most promising approaches utilize neural networks \cite{khoo2019solving,ren2019,rotskoff2020} and/or diffusion maps~\cite{trstanova2020local}.
The approach based on neural networks is more straightforward in its implementation, while the one based on diffusion maps is more interpretable, visual, 
and intuitive, and we will focus on it in this work. 
We also would like to acknowledge the work of Lai and Lu (2018) \cite{lai2018point} on computing committors discretized to point clouds.
They proposed and advocated the ``local mesh" algorithm and highlighted its advantage over an approach utilizing diffusion maps. Contrary to diffusion maps, the ``local mesh" does not require the input data to be sampled from an invariant distribution, which is indeed a very appealing feature. However, the ``local mesh''
implementation only considers SDEs with constant noise and has no theoretical guarantees of convergence, while we found diffusion maps simple, robust, reliable, and deserving further development.

The diffusion map
algorithm introduced in 2006 in the seminal work by Coifman and Lafon~\cite{coifman2006diffusion} is a widely used
manifold learning algorithm. 
Like its predecessors such as  
locally linear embedding~\cite{roweis2000nonlinear}, isomap~\cite{tenenbaum2000global}, and the Laplacian
eigenmap~\cite{belkin2003laplacian},
diffusion map relies on the assumption that the dataset lies in the vicinity of a certain low-dimensional manifold, 
while the dimension of the ambient space can be high. 
Importantly, this manifold does not need to be known a priori.
Diffusion map inherits the use of a kernel for learning the local geometry from its predecessors and 
upgrades it with the remarkable ability to approximate a class of differential operators  discretized to the dataset.
These include the backward Kolmogorov operator (a.k.a. the generator) needed for computing the committor function.

More specifically, the standard diffusion map with isotropic Gaussian kernel~\cite{coifman2006diffusion} yields
an approximation to the backward Kolmogorov operator if the input data 
comes from an Ito diffusion process with a gradient drift and an isotropic additive noise. The problem of finding the committor then reduces
to solving a system of linear algebraic equations of a manageable size. This strategy would be suitable for MD data in the original 
$\mathbb{R}^{3N_a}$-dimensional space of atomic positions, and has been utilized heavily in previous work~\cite{rohrdanz2011determination,trstanova2020local, ferguson2010systematic, ferguson2011nonlinear, zheng2011polymer, coifman2008diffusion}.
As mentioned above, biophysicists prefer to keep track of collective variables rather than positions of atoms, as it lowers the dimension and gives more useful   
information about the molecular configuration.
Unfortunately, the transformation to
collective variables induces anisotropy and position-dependence on the noise term~\cite{weinan2004, cameron2013estimation, 2006string, berez2014multidim}.
The  resulting SDE describing the dynamics in collective variables has a non-gradient drift and a multiplicative anisotropic noise: {
\begin{equation}
\label{eqn:cvsde}
dx_t = \left[-M(x_t)\nabla F(x_t) + \nabla\cdot M(x_t)\right]dt + \sqrt{2\beta^{-1}} M^{1/2}(x_t)dw_t.
\end{equation}
Here,  the \emph{diffusion matrix} $M(x)$, is a symmetric positive definite matrix function, $\nabla F(x)$ is the \emph{mean force}, the gradient of the \emph{free energy}, and $d w$ is the increment of the standard multidimensional Brownian motion. The definitions and the computation of $M(x)$ and $\nabla F(x)$ are detailed in \ref{sec:diffusion_estimation}.
}

{\subsection{The goal and a brief summary of main results}}
The goal of the present work is to extend  { the diffusion map algorithm for computing the committor functions  for data sampled from the invariant density of SDE \eqref{eqn:cvsde} and prove theoretical guarantees for its correctness.}
We emphasize that our application of diffusion maps is not for learning order parameters or doing dimensional reduction as in 
many previous works, e.g. \cite{rohrdanz2011determination, ferguson2010systematic, ferguson2011nonlinear,coifman2008diffusion}. We assume that we already have
a dimensionally reduced system due to the use of collective variables. Instead, we are going to approximate the 
generator of SDE \eqref{eqn:cvsde} by means of diffusion maps and use it to compute the committor.

If { the diffusion matrix $M(x)$ in SDE \eqref{eqn:cvsde}} arose from a diffeomorphism, we could straightforwardly apply the diffusion map with the Mahalanobis kernel introduced by Singer and Coifman in 2008~\cite{singer2008}. 
However, the transformation to collective variables is not a diffeomorphism.  
Typically, the number of collective variables is much smaller than $3N_a$ and the computation of collective variables involves an averaging with respect to the invariant probability density. 
As a result, all that can be guaranteed is that the diffusion matrix $M(x)$ in \eqref{eqn:cvsde} 
is symmetric positive definite. { Furthermore, it is known that not every symmetric positive definite matrix function $M(x)$ is decomposable to $M(x)=J(x)J^\top(x)$ 
where $J(x)$ is the Jacobian matrix for some diffeomorphism \cite{risken1996fokker}, as required for the formalism introduced in \cite{singer2008}.}

{ Our results are the following:
\begin{itemize}
\item We offer an elementary proof showing that a smooth symmetric positive definite $d\times d$ matrix function defined in an open set $\Omega$ is not necessarily decomposable as $JJ^\top(x)$ where $J(x)$ is the Jacobian matrix of some smooth vector-function $f(x)$ in $\Omega$. Moreover, we establish a criterion for the existence of such a decomposition for a special class of $2\times 2$ matrix functions of the form $M(x) = m^2(x)I_{2\times 2}$ where $m(x)$ is a nonnegative twice continuously differentiable function in a simply connected open set $\Omega\subset\mathbb{R}^2$.
    \item We prove a theorem establishing a family of differential operators approximated by the family of diffusion maps with Mahalanobis kernel with an arbitrary smooth symmetric positive definite matrix function $M(x)$ parametrized by the renormalization parameter $\alpha\in\mathbb{R}$. We also prove that if $\alpha = \sfrac{1}{2}$ the corresponding diffusion map with the Mahalanobis kernel approximates exactly the generator for SDE \eqref{eqn:cvsde}. Our proofs involve only elementary tools such as multivariable calculus and linear algebra. We will refer to the resulting diffusion map algorithm with the Mahalanobis kernel as {\tt mmap}. We will compare its results to the original diffusion map algorithm with isotropic Gaussian kernel and refer to it as {\tt dmap}.

\item We apply {\tt mmap} with $\alpha=\sfrac{1}{2}$ to two common test systems: the
alanine dipeptide molecule~\cite{rohrdanz2011determination, trstanova2020local, ferguson2010systematic, chen2018collective, stamati2010application, wang2021state}
and the Lennard-Jones cluster of 7
particles in 2 dimensions (LJ7)~\cite{dellago1998efficient, wales2002discrete, passerone2001action}. 
For both systems, we compute the committor function on trajectory data and provide validation for the results. 
For alanine dipeptide, we compare the committor obtained using {\tt mmap} with the one computed using 
a finite difference method and demonstrate good agreement between these two. 
We contrast the committor obtained from {\tt mmap} with the one obtained by {\tt dmap} and show that the latter is notably less accurate.
We also compute the reactive currents for both {\tt mmap} and {\tt dmap} and obtain an estimate for the transition rate.
In addition, we investigate the dependence of the error in the estimate for the committor on the scaling parameter 
$\epsilon$ in the kernel of {\tt mmap} and {\tt dmap} and conclude that {\tt mmap} is consistently 
more accurate and at least as robust as {\tt dmap}.
For LJ7, we conduct a committor analysis, a simulation-based validation technique for the committor~\cite{2006string,peters2016reaction,geissler1999kinetic}.
Our results indicate that {\tt mmap} produces a reasonable approximation for the 
$\sfrac{1}{2}$-isocommittor surface, while {\tt dmap} places this surface at an utterly wrong place in the collective variable space. 
\end{itemize}
}

The rest of the paper is organized as follows.
In Section~\ref{sec:background} we review relevant concepts from MD in collective variables, transition path
theory and diffusion
maps. Our theoretical results are  presented in 
Section~\ref{sec:main}.
Applications to alanine dipeptide and LJ7  are detailed in 
Section~\ref{sec:examples}. 
Concluding remarks are given in Section
\ref{sec:conclusion}. 
Various technical points and proofs are worked out in the appendices.


\section{Background}
\label{sec:background}
In this section, we give a quick overview on collective variables, transition path theory, and diffusion maps.

\subsection{Effective dynamics and collective variables} 
\label{sec:colvar}

Our primary interest in this work is in datasets arising in MD simulations.
We consider the \emph{overdamped Langevin equation}, a
simplified model for molecular motion which describes the molecular configuration in terms of the  
positions $y$ of its atoms:
\begin{equation}
\label{eqn:overdamped_sde}
dy_t = -\nabla V(y_t) dt + \sqrt{2 \beta^{-1}} dw_t, 
\end{equation}
where $y \in \R^{m} $, $V:\R^{m} \to \R$ is a potential
function, $\beta^{-1} = k_b T$ is temperature in units of
Boltzmann's constant, $t$ is time, and $w_t$ is a Brownian motion in
$\R^{m}$. 
Given certain conditions on the potential $V$, a system
governed by overdamped Langevin dynamics is ergodic with respect to the \emph{Gibbs
distribution} $\mu(y) = Z_y^{-1} e^{-\beta V(y)},$ where $Z_y$ is a normalizing
constant.  The overdamped Langevin dynamics has infinitesimal generator 
\begin{equation}
    \label{eqn:overdamped_generator}
    \mathcal{L}f = \beta^{-1}\Delta f - \nabla f \cdot \nabla V = \beta^{-1} e^{\beta V} \nabla \cdot (e^{-\beta V} \nabla f)
\end{equation}
defined for twice continuously differentiable, square-integrable functions $f$.

As mentioned in the introduction, the number of atoms in biomolecules is typically very large. Even for such a small molecule as alanine dipeptide,
the number of atoms is 22 and results in a 66-dimensional configuration space ($m=66$).
Furthermore, to describe the state of a biomolecule one does not need atomic positions $y$ per se but rather certain functions in $y$
specifying desired geometric characteristics. 
Therefore, to reduce the dimensionality and obtain a more useful and comprehensive description of the system-at-hand, one uses
 \emph{collective variables} (CVs).
CVs are functions of the atomic coordinates
designed to give a coarse-grained description of the system's dynamics,
preserving transitions between metastable states but erasing small-scale
vibrations. Physical intuition has traditionally driven the choice of
collective variables including
dihedral angles, intermolecular distances, macromolecular distances and
various experimental measurements.

We denote the set of CVs as the vector-valued function { $x = \theta(y).$}
Since our goal is to compute the committor  (its precise definition is given in Section \ref{sec:tpt}),  a chosen set of CVs is good if the committor is well-approximated by a function that depends only on $\theta(y)$.
The dynamics in collective variables is approximated
by a diffusion process governed by  \cite{2006string} SDE \eqref{eqn:cvsde}: 
\begin{equation*}
dx_t = \left[-M(x_t)\nabla F(x_t) + \beta^{-1} \nabla \cdot M(x_t)\right]dt + \sqrt{2\beta^{-1}}M(x_t)^{1/2} dw_t.
\end{equation*}
{ Here, $F(x)$ is the \emph{free energy} and $M(x)$ is the \emph{diffusion matrix}, a symmetric positive definite matrix function. They are computed by averaging the appropriate functions of $x=\theta(y)$ over all $y\in\mathbb{R}^m$ with respect to the invariant density $\mu(y) = Z_y^{-1} e^{-\beta V(y)}$. The exact formulas for $F(x)$ and $M(x)$ and their evaluation in practice are detailed in \ref{sec:diffusion_estimation}.}

The generator for SDE \eqref{eqn:cvsde} is given by 
\begin{equation}
\label{eqn:mgen}
\mathcal{L}f = (-M\nabla F + \beta^{-1}(\nabla \cdot M))^{\top}\nabla f + \beta^{-1}{\sf tr}[M\nabla \nabla f],
\end{equation}
which can also be written in divergence form as 
\begin{equation}\label{eqn:mgen_divergence}
\mathcal{L}f = \beta^{-1} e^{\beta F} \nabla \cdot(e^{-\beta F}M \nabla f).
\end{equation}

One can check \cite{weinan2006towards} that the process given by an SDE of the form \eqref{eqn:cvsde}
is reversible and the invariant probability measure for \eqref{eqn:cvsde} is $\rho(x)=Z^{-1} e^{-\beta F(x)}$, the Gibbs measure.
Moreover, we would like to remark that any reversible diffusion process must be of the form \eqref{eqn:cvsde} \cite{pavliotis2014stochastic}.


\subsection{Transition path theory in collective variables}
\label{sec:tpt}
Throughout this section we assume that the system under consideration is governed by the overdamped Langevin SDE in collective variables \eqref{eqn:cvsde}. Suppose we have an infinite trajectory $\{x_t\}_{t=0}^{\infty}$. Further, suppose that we have designated a
priori two minima $x_A,$ $x_B$ of the potential $F$ with corresponding
disjoint open subsets $A \ni x_A, B\ni x_B$ which we refer to as the \emph{reactant} and
\emph{product} sets respectively. 
\begin{figure}[htbp]
\begin{center}
    \includegraphics[width=0.7\textwidth]{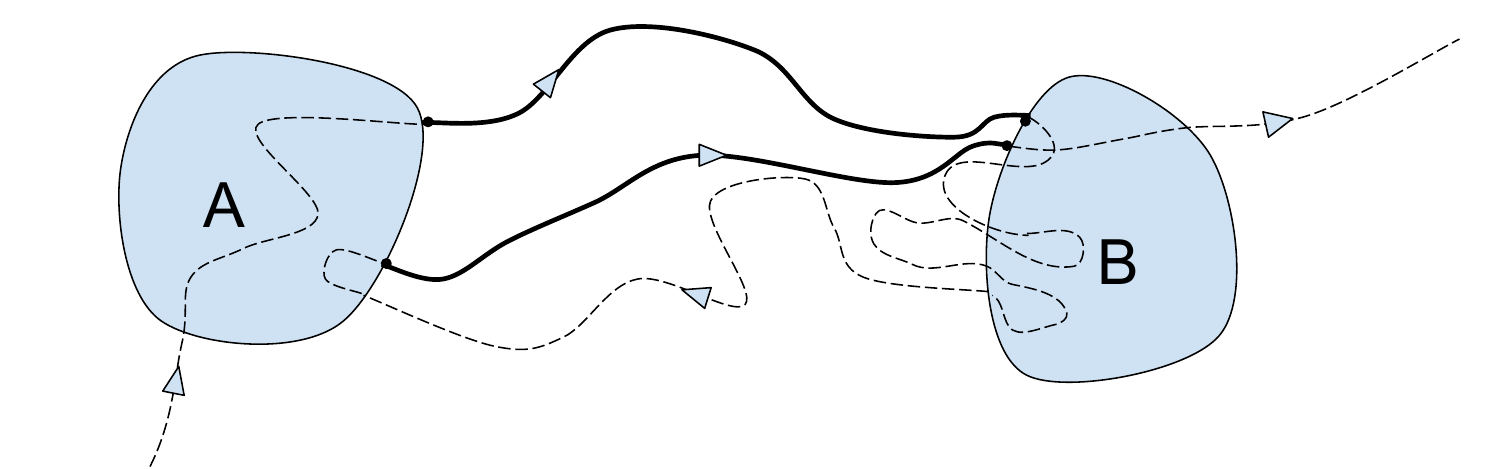}
\caption{
{\small A segment of a long trajectory. Reactive pieces from reactant state $A$ to product state $B$ are shown with solid lines.}
} 
\label{fig:reactive}
\end{center}
\end{figure}
Transition path theory (TPT)~\cite{weinan2006towards,EVE2010} is a mathematical framework to analyze statistics of transitions 
between the reactant $A$ and the product $B.$
The subject of TPT is the ensemble of \emph{reactive trajectories}, defined as any
continuous pieces of the trajectory $x_t$ which start at $\partial A$ and
end at $\partial B$ without returning to $\partial A$ in-between (see
Figure~\ref{fig:reactive}).
Key concepts of TPT are the
\emph{forward and backward committor functions} with
respect to $A$ and $B$. Since the governing SDE \eqref{eqn:cvsde} is reversible, 
the forward $q_{+}$ and backward $q_{-}$ committors are related via  $q_{-}=1-q_{+}$ \cite{weinan2006towards}.
Hence, for brevity, we will refer to the forward committor as \emph{the committor} and denote it merely by $q(x)$.
The committor $q$ has a straightforward probabilistic interpretation: 
\begin{equation}
\label{eqn:comm_def}
q(x) = \mathbb{P}(\tau_B < \tau_{A} \mid x_0=x),
\end{equation}
where $\tau_A:= \inf \lbrace t > 0 \mid x_t \in A \rbrace$ and $ \tau_B = \inf \lbrace t > 0 \mid x_t \in B \rbrace$ are
 the \emph{first entrance times} of the sets $A$ and $B$, respectively.
In words, $q(x)$ is the probability that a trajectory starting at $x$ will arrive at the product set $B$ before arriving at the reactant set $A$.
One can show that $q$ satisfies the boundary value problem~\cite{weinan2006towards}
\begin{equation}\label{eqn:comm_pde}
\begin{cases}
\mathcal{L}q(x) =  0 &  x \notin (A \cup B) \\
q(x) = 0 & x \in \partial A \\
q(x) = 1 & x \in \partial B, \\
\end{cases}
\end{equation}
where $\mathcal{L}$ is the infinitesimal generator~\eqref{eqn:mgen}.
Once the committor is computed, one can find the reactive current that reveals the mechanism of the transition process:
\begin{equation}
\label{eqn:cvrcur}
\mathcal{J} = \beta^{-1}Z^{-1}e^{-\beta F(x)}M(x)\nabla q(x).
\end{equation}
The integral of the flux of the reactive current through any hypersurface $\Sigma$
separating the sets $A$ and $B$ gives the reaction rate:
\begin{equation}
\label{eqn:nuAB}
\nu_{AB} :=\lim_{t\rightarrow\infty} \frac{N_{AB}}{t}= \int_{\Sigma}\mathcal{J}\cdot{\hat{n}}d\sigma,
\end{equation}
where $N_{AB}$ is the total number of transitions from $A$ to $B$ performed by the system 
within the time interval $[0,t]$, and $\hat{n}$ is the unit normal to the surface $\Sigma$ pointing in the direction of $B$.

Transition path theory has been extended to Markov jump processes~\cite{metzner2009,cameron2014flows}
on a finite state space $\mathcal{S}$, $|\mathcal{S}|=n$, defined by the generator matrix $L$ satisfying
\begin{equation}
\label{eqn:Ldiscrete}
\begin{cases}
\sum_{j\in \mathcal{S} } L_{ij} = 0,  &i\in\mathcal{S} \\
L_{ij} \ge 0, &i \neq j,~i,j\in\mathcal{S}.
\end{cases}
\end{equation}
The settings and concepts in discrete TPT mimic those from its continuous counterpart.
In particular, the committor is the vector $[q] = [q_1, \ldots, q_n ]^{\top}$ with
$$
[q]_i = \mathbb{P}(\tau_B < \tau_{A} \mid X_0 = i).
$$ 
Analogously, $[q]$ 
solves the matrix equation 
\begin{equation}\label{eqn:comm_matrix}
\begin{cases}
[Lq]_i =  0 &  i \in \mathcal{S}\backslash(A\cup B), \\
[q]_i = 0 & i \in A, \\
[q]_i = 1 & i \in B. 
\end{cases}
\end{equation}

Our goal is the following.
Let $\{x_i\}_{i=1}^{n}$ be a dataset sampled from SDE~\eqref{eqn:cvsde}.
We need to construct a \emph{discrete} generator $L$ such that, given a smooth scalar function $f(x)$, we have
$$
\sum_{j=1}^n L_{ij} f(x_j) \approx\mathcal{L}f(x_i)\quad \text{for each}\quad x_i,~1\le i\le n,
$$  
where $\mathcal{L}$ is the generator of~\eqref{eqn:cvsde} defined in~\eqref{eqn:mgen}.
We refer to this approximation as a
\emph{pointwise approximation of the generator} with respect to the dataset.
Given a pointwise approximate generator matrix $L$, we can then pointwise approximate the continuous
committor $q$ via our solution to the discrete committor equation~\eqref{eqn:comm_matrix}. 
In this work, we will show that the Mahalanobis diffusion map ({\tt mmap}) yields the desired approximation.


\subsection{Diffusion Maps}
\label{sec:diffusion_maps}
The diffusion map algorithm takes as input a dataset $X = \{x_i\}_{i=1}^n \subset \R^{d}$
of independent samples drawn from a distribution $\rho(x)$ that does not need to be known in advance. 
The manifold learning framework assumes that $X$ lies near a manifold $\mathcal{M}$ which has low intrinsic dimension.
Pairwise similarity of data is encoded through a kernel function
$k_{\epsilon}(x,y)$ whose simplest form is given by
\begin{equation}
\label{eqn:iso_gaussian}
k_{\epsilon}(x,y) =
\exp\left(-\frac{||x-y||^2}{2\epsilon}\right).
\end{equation}
The user-chosen parameter $\epsilon >0$ is the \emph{kernel bandwidth parameter} (or the \emph{scaling  parameter}). 
The original diffusion map algorithm~\cite{coifman2006diffusion} requires an
isotropic kernel $h_{\epsilon}(||x - y||^{2})$ with exponential decay as $\|x-y\|\rightarrow\infty$. 
Let us describe the construction of a diffusion map following the steps in~\cite{coifman2006diffusion}.
 The kernel $k_{\epsilon}(x,y)$ is used to define  
an $n \times n$ similarity matrix $K_{\epsilon}$ with 
$[K_{\epsilon}]_{ij} = k_{\epsilon}(x_i, x_j)$.
The strong law of
 large numbers implies that for a scalar $f(x)$ on $\R^{d}$ we have:
\begin{equation}
\label{eqn:montecarlo} 
 \lim \limits_{n \to \infty}\frac{1}{n} \sum\limits_{j = 1}^{n}
 k_{\epsilon} (x_i, x_j) f(x_j) = \int_{\R^{d}} k_{\epsilon} (x_i, y) f(y)
 \rho(y) dy ~~\text{ almost surely.}
\end{equation}
It follows from the Central Limit Theorem that the error of this estimate decays as $\mathcal{O}(n^{-\frac{1}{2}})$. 
The action of the kernel
on sufficiently large datasets is approximated by an integral operator
$\mathcal{G}_{\epsilon}$ defined by 
\begin{equation}
\label{eqn:aniso_integral}
(\mathcal{G}_{\epsilon}f)(x) :=\int_{\R^{d}} k_{\epsilon}(x, y) f(y) dy.
\end{equation}
Namely, for a sufficiently large dataset, 
\begin{equation}
    \label{eqn:kernel_limit_density}
 \lim \limits_{n \to \infty}\frac{1}{n} \sum\limits_{j = 1}^{n}
 k_{\epsilon} (x_i, x_j) f(x_j) 
 = \mathcal{G}_{\epsilon}(f\rho)(x_i)
 \text{ almost surely}.
\end{equation}

The main innovation of the diffusion map algorithm \cite{coifman2006diffusion} in comparison with Laplacian eigenmap \cite{belkin2003laplacian}
is the introduction of the parameter $\alpha\in\R$ allowing us to control the influence of the density $\rho$ and approximate a whole family of differential operators.
Let us review the construction of diffusion maps.
The first step is to compute the normalizing factor $\rho_{\epsilon}(x)$, where
\begin{equation}
    \label{eqn:mmap_density}
\rho_{\epsilon}(x) = \int_{\R^{d}} k_{\epsilon}(x, y) \rho(y) dy.
\end{equation}
The \emph{right-normalized} kernel is
defined as
\begin{equation}
\label{eqn:rnkernel}
k_{\epsilon,\alpha}(x,y) = \frac{k_{\epsilon}(x,y)}{\rho_{\epsilon}^{\alpha}(y)}.
\end{equation}
We note that one can write the action of the right-normalized kernel on density-weighted functions $f(x)\rho(x)$ as
$$
\int_{\R^d} k_{\epsilon, \alpha}(x, y) f(y) \rho(y) dy = \int_{\R^d} k_{\epsilon}(x, y)
f(y) \rho(y)\rho_{\epsilon}^{-\alpha}(y) dy.
$$
Hence the right normalization regulates the influence of the density $\rho(x)$ in Monte Carlo integrals.
We define vector $p_{\epsilon}$ as the vector of row sums of the matrix $K_{\epsilon}$: 
\begin{equation}
\label{eqn:disc_pe}
    [p_{\epsilon}]_{i} = \sum\limits_{j = 1}^{n}
   [K_{\epsilon}]_{i,j}.
\end{equation}
We define the diagonal matrix  $D_{\epsilon} = {\sf diag}(p_{\epsilon})$.
Then, the discrete counterpart for the  \emph{right-normalized} kernel \eqref{eqn:rnkernel} is
\begin{equation}
\label{eqn:Kea}
K_{\epsilon,\alpha} :=
K_{\epsilon} D_{\epsilon}^{-\alpha}.
\end{equation}
Next, we fix 
\begin{equation}
\label{eqn:rhoea}
\rho_{\epsilon, \alpha}(x): = \int_{\R^{d}} k_{\epsilon,\alpha}(x, y) \rho(y) dy
\end{equation}
to \emph{left-normalize} the kernel, and define the Markov operator
$\mathcal{P}_{\epsilon, \alpha}$ on $f$ as
\begin{equation}
\label{eqn:Pea}
\mathcal{P}_{\epsilon, \alpha} f(x) = \frac{\int_{\R^{d}} k_{\epsilon,\alpha}(x,y) f(y) \rho (y) dy}{\rho_{\epsilon,\alpha}(x)}.
\end{equation}
Finally, we define a family of operators $\mathcal{L}_{\epsilon, \alpha}$ as
\begin{equation}
    \label{eqn:Lea}
\mathcal{L}_{\epsilon,\alpha}f = \frac{\mathcal{P}_{\epsilon,\alpha}f - f}{\epsilon}\equiv  \frac{1}{\epsilon}\left(\frac{\mathcal{G}_{\epsilon}(f \rho
\rho_{\epsilon}^{-\alpha})}{\mathcal{G}_{\epsilon}(\rho
\rho_{\epsilon}^{-\alpha})} - f\right).
\end{equation}
To obtain its discrete counterpart, we form a diagonal matrix
$D_{\epsilon,\alpha}$ with row sums of $K_{\epsilon,\alpha}$ along its diagonal and use it to \emph{left-normalize} the matrix $K_{\epsilon, \alpha}$ and 
get the Markov matrix
$P_{\epsilon, \alpha} = D^{-1}_{\epsilon, \alpha} K_{\epsilon, \alpha}.$
Then the family of \emph{discrete operators} $L_{\epsilon,\alpha}$ is defined as 
\begin{equation}
\label{eqn:discrete_gen}
L_{\epsilon,\alpha} := \frac{P_{\epsilon, \alpha} - I}{\epsilon}.
\end{equation}

The discrete  and continuous operators $L_{\epsilon,\alpha}$ and $\mathcal{L}_{\epsilon,\alpha}$ relate via the pointwise infinite data limit.
Let us fix an arbitrary point $x_1$ drawn from the invariant density $\rho$ and keep adding more points drawn from $\rho(x)$ into a dataset containing $x_1$. Then
\begin{equation}
\label{eqn:LL}
\lim_{n\rightarrow\infty}\left[L_{\epsilon,\alpha}[f]\right]_1 =\left[ \mathcal{L}_{\epsilon,\alpha}\right]f(x_1),\quad\text{almost surely}
\end{equation}
with error decaying as $O(n^{-\frac{1}{2}})$.
It is proven in \cite{coifman2006diffusion} that\footnotemark[1]
\begin{equation}
\label{eqn:Lelim}
\lim_{\epsilon\rightarrow 0} \mathcal{L}_{\epsilon,\alpha}f = \frac{1}{2}\left(\frac{\Delta(\rho^{1-\alpha}f) - f\Delta(\rho^{1-\alpha})}{\rho^{1-\alpha}}\right).
\end{equation}
\footnotetext[1]{We believe that a multiplicative constant is missing in Theorem 2 and in Proposition 10 in \cite{coifman2006diffusion}.}
If the input dataset $X$ comes from the overdamped Langevin SDE \eqref{eqn:overdamped_sde}, and hence
the invariant measure is Gibbs, i.e., $\rho(x) = Z^{-1}e^{-\beta V(x)}$, and the kernel is given by \eqref{eqn:iso_gaussian}, then
\begin{equation}
\label{eqn:dmap_generator_alpha}
\lim_{\epsilon\rightarrow 0} \mathcal{L}_{\epsilon,\alpha}f  = \frac{1}{2}\left[ \Delta f - 2\beta (1-\alpha) \nabla f^{\top} \nabla V\right].
\end{equation}
{ The series of steps described here leading to the construction of the matrix operator $L_{\epsilon,\alpha}$  \eqref{eqn:discrete_gen} will be referred to as {\tt dmap} (an abbreviation for the diffusion map algorithm).}

The renormalization parameter $\alpha$ tunes the
influence of the density $\rho$ in the operator $\mathcal{L}_{\epsilon,\alpha}$. Setting $\alpha =
0$ yields, up to a multiplicative constant $\sfrac{1}{2}$, the standard graph Laplacian, which converges to the Laplace-Beltrami operator
only if $\rho$ is uniform. With $\alpha = 1$ the density $\rho(x)$ is reweighted
so that the limiting density for $\epsilon \to 0, n \to \infty$ is uniform and
the Laplace-Beltrami operator $\mathcal{L}f =\tfrac{1}{2} \Delta f$ is recovered. Setting
$\alpha = \sfrac{1}{2}$ recovers the backward Kolmogorov operator, 
\begin{equation}
\label{eqn:bk}
\lim_{\epsilon\rightarrow 0} \mathcal{L}_{\epsilon,\sfrac{1}{2}}f = \frac{\beta}{2}\left[ \beta^{-1}\Delta f - \nabla f^{\top} \nabla V\right] \equiv \frac{\beta}{2}\mathcal{L}f,
\end{equation}
which is needed for computing the committor.

The diffusion map algorithm has also seen many other modifications and
improvements which are used heavily in practice.
A primary development concerns the kernel bandwidth parameter $\epsilon,$ which usually
requires extensive tuning in practice. Diffusion map variations such as
locally scaled diffusion maps~\cite{rohrdanz2011determination} and variable
bandwidth diffusion kernels~\cite{berry2016variable} utilize a bandwidth which
varies at each data point and can improve stability as well as accuracy at the boundary points of a dataset.

%
%
\subsection{Diffusion maps for data coming from SDEs with multiplicative noise}
\label{sec:dmmn}
An important limitation of the original class of diffusion maps with isotropic kernels \cite{coifman2006diffusion} 
is that it can only approximate infinitesimal generators of the form~\eqref{eqn:dmap_generator_alpha}
that are relevant only for gradient flows~\cite{banisch2020,berry2016local}.
This limitation is caused by the fact that the construction relies on the sampling density of the data but not
dynamical properties of the data. 
For example, the reversible diffusion process in collective variables~\eqref{eqn:cvsde} has the Gibbs distribution $\rho(x)$ and
diffusion matrix $M(x)$,
but application of diffusion maps will
approximate the generator of gradient dynamics with density $\rho(x)$ and
\emph{constant} diffusion matrix~\cite{banisch2020}. 
Hence, to approximate
generators for diffusion
processes with multiplicative noise, a modified diffusion maps algorithm is
required~\cite{singer2008,banisch2020}. 


\subsubsection{Mahalanobis diffusion maps}
\label{sec:mmap}
The foundational approach for applying diffusion maps to diffusion processes with multiplicative noise is proposed by Singer and Coifman (2008)~\cite{singer2008}.
They consider a diffusion process  
\begin{equation}
    \label{eqn:coifsing_orig_sde}
    dz_t = b(z_t)dt + \sqrt{2}dw_t,\quad z_t\in\mathcal{M},
\end{equation}
where $\mathcal{M}$ is a $d$-dimensional manifold. The generator for this process is given by 
\begin{equation}
\label{eqn:coifsing_orig_generator}
    \mathcal{L}_z = \Delta + b\cdot \nabla.
\end{equation}
The
dynamics~\eqref{eqn:coifsing_orig_sde} are considered as the unobserved intrinsic
dynamics of interest, while the observed dynamics is the process $x_t =
\zeta(z_t),$
where $\zeta$ is an injective, smooth function from $\mathcal{M}$ to $\R^{m},$
where $m \ge d.$ 
To elucidate the key idea from \cite{singer2008}, we assume that $\mathcal{M}\equiv\R^d,$ $m=d,$ and that the mapping $x = \zeta(z)$ is a diffeomorphism.
From Ito's Lemma, the differential of $\zeta$ is
\begin{equation}
\label{eqn:itos}
d\zeta_i(z_t) = \sum\limits_{j=1}^d\left(\frac{\partial \zeta_i
(z_t)}{\partial z_j} b_j(z_t) + \frac{\partial^2 \zeta_i(z_t)}{\partial
z_j^2}\right) dt + \sqrt{2}\sum\limits_{j=1}^{d} \frac{\partial
\zeta_i(z_t)}{\partial z_j}[dw_t]_j.
\end{equation}
Thus, the noise term for $d\zeta(z_t)$ is given by $\sqrt{2} J(z_t) dw_t,$ where $J(z_t)$ is the Jacobian of $\zeta$ with entries $[J(z)]_{ij} = \frac{\partial \zeta_i}{\partial z_j}.$

The crucial fact utilized in~\cite{singer2008} is 
the following relationship between the {$(JJ^{\top})^{-1}$}-weighted quadratic form in the space of observed variables $x = \zeta(z)$
and the Euclidean distance in the $z$-space:
\begin{align}
& 
\frac{1}{2}(\zeta(z) - \zeta(z'))^{\top}\left[(JJ^{^\top})^{-1}(x) + (JJ^{^\top})^{-1}(y)\right](\zeta(z) - \zeta(z')) \notag\\
&\quad \ = ||z-z'||^{2} + \mathcal{O}(||z - z'||^{4}). \label{eqn:mahal_jacob}
\end{align}
This relationship motivated the introduction of the anisotropic kernel
\begin{equation}
    \label{eqn:singerkernel}
    k_{\epsilon}(x, y) =
\exp\Big(-\frac{1}{4\epsilon}(x - y)^{\top}((JJ^{\top})^{-1}(x) + (JJ^{\top})^{-1}(y))(x - y) \Big).
\end{equation}
The diffusion map with kernel \eqref{eqn:mkernel} approximates the generator $\mathcal{L}_z$ for the unknown latent dynamics $z_t$ in the case where $b(z_t) = -\nabla U(z_t)$ for some potential $U(z)$.
{The fact that the diffusion matrix is of the form  $JJ^{\top}$ is essential for the proof of this approximation presented in \cite{singer2008}. }
Relationship \eqref{eqn:mahal_jacob} essentially reduces this to the proof for diffusion maps with rotationally symmetric Gaussian kernel in \cite{coifman2006diffusion}.

The { algorithm proposed in \cite{singer2008}} and variants have been applied to multiscale fast slow-processes~\cite{singer2008, dsilva2016data}, nonlinear filtering problems~\cite{talmon2013empirical}, optimal transport and data fusion problems~\cite{moosmuller2020geometric, dietrich2020manifold}, chemical reaction
networks~\cite{singer2009detecting}, localization in sensor
networks~\cite{schwartz2019}, and molecular dynamics~\cite{dsilva2013nonlinear}.
Further,  kernel \eqref{eqn:mkernel} has recently been {used for isometric embeddings to high-dimensional latent spaces~\cite{peter2020}
and for deep learning
frameworks~\cite{schwartz2019, peter2020}}.

{ This work addresses the case where diffusion matrix $M(x)$ in SDE \eqref{eqn:cvsde} \emph{is not necessarily decomposable as $M(x)=\left(JJ^{\top}\right)(x)$} and hence there is no relationship of the form \eqref{eqn:mahal_jacob} to utilize. Following \cite{singer2008}, we use the Mahalanobis kernel
\begin{equation}
    \label{eqn:mkernel}
    k_{\epsilon}(x, y) =
\exp\Big(-\frac{1}{4\epsilon}(x - y)^{\top}(M^{-1}(x) + M^{-1}(y))(x - y) \Big).
\end{equation} 
Other than the choice of the kernel, the Mahalanobis diffusion map algorithm ({\tt mmap}) follows the steps of the diffusion map algorithm ({\tt dmap}) detailed in Section \ref{sec:diffusion_maps}. For the reader's convenience, we summarize {\tt mmap} in Algorithm \ref{alg:mmap}. The family of differential operators approximated by {\tt mmap} will be derived in Section \ref{sec:main}.
}

{
\begin{algorithm}
  \SetAlgoNoLine
  \DontPrintSemicolon
  \KwIn{data $X = \{x_i\}_{i=1}^n$, diffusion matrices $\{M(x_i)\}_{i=1}^n$, bandwidth $\epsilon$, renormalization parameter $\alpha$ }
  \KwOut{Matrix operator $L_{\epsilon,\alpha}$}
    \nonl \custom{\textnormal{Construct kernel using~\eqref{eqn:mkernel}}}{
  $\displaystyle [K_{\epsilon}]_{i,j}  = k_{\epsilon}(x_i, x_j), ~~i,j=1,\ldots,n $ \;}
    \nonl \custom{\textnormal{Find row sums of the kernel matrix and form a diagonal matrix}}{
   $\displaystyle [p_{\epsilon}]_i = \sum_{j=1}^n [K_{\epsilon}]_{ij},~~ i=1,\ldots, n$\;
   $\displaystyle D_{\epsilon} = {\sf diag}\{[p_{\epsilon}]_1,\ldots,[p_{\epsilon}]_n \}$\;      
  }
  \nonl\custom{\textnormal{Right normalize the kernel}}{
  $\displaystyle[K_{\epsilon,\alpha}] :=K_{\epsilon}D_{\epsilon}^{-\alpha}$ }
   \nonl\custom{\textnormal{Left normalize the kernel}}{
     $\displaystyle [p_{\epsilon,\alpha}]_i = \sum_{j=1}^n [K_{\epsilon,\alpha}]_{ij}, ~~i=1,\ldots, n$\;
   $\displaystyle D_{\epsilon,\alpha} = {\sf diag}\{[p_{\epsilon,\alpha}]_1,\ldots,[p_{\epsilon,\alpha}]_n \}$\;      
  $\displaystyle P_{\epsilon, \alpha}:=  D_{\epsilon,\alpha}^{-1}K_{\epsilon,\alpha}$}
  \nonl\custom{\textnormal{Construct generator}}
  {$\displaystyle L_{\epsilon,\alpha} = \frac{P_{\epsilon,\alpha} - I}{\epsilon}$ \;}
  \caption{Mahalanobis Diffusion Map Algorithm ({\tt mmap}) }\label{alg:mmap}
\end{algorithm}
}

\begin{remark}
The term \emph{Mahalanobis kernel} is related to the Mahalanobis distance. If data points $x$ and $y$ are sampled from a multivariate Gaussian distribution with covariance matrix {$C$, then 
$$
(x-y)^\top C^{-1} (x-y)
$$
is the squared Mahalanobis distance between $x$ and $y$. In the orthonormal basis of eigenvectors of $C$},
the difference between each component of $x$ and $y$ is normalized by the corresponding variance, which reflects the difficulty to deviate along each direction. 
Hence,  kernel \eqref{eqn:mkernel} is a decaying exponential function of an approximate squared Mahalanobis distance. 
Therefore, it is designed to account for anisotropy of the diffusion process where the data is coming from.
\end{remark}

\subsubsection{Local kernels}
\label{sec:lk}
A further development facilitating data-driven analysis of anisotropic diffusion processes was  done by Berry and Sauer (2016) \cite{berry2016local}. 
Their local kernels theory generalizes theoretical results of~\cite{coifman2006diffusion,singer2008} to 
a class of anisotropic kernels which utilize user-defined drift
 vectors $b(x_i)$ and diffusion matrices $A(x_i)$. 
 The local kernel approach has been extended to related work in solving elliptic PDEs with diffusion maps~\cite{gilani2019} 
 and to computing reaction coordinates for molecular simulation~\cite{banisch2020}.
 In~\cite{banisch2020} the authors incorporate arbitrary sampling densities into the local kernel 
 approach~\cite{berry2016local} and prove that {for user-defined drift vector $b(x)$ and user-defined diffusion matrix $A(x)$}, kernels of the form 
\begin{equation} 
    \label{eqn:local_kernel}
 k_{\epsilon}^{A,b}(x,y) = \exp\left(-\frac{1}{4 \epsilon}(x - y + \epsilon b(x))^{\top} A^{-1}(x)(x - y + \epsilon b(x)) \right),
\end{equation}
applied to data $\{x_i\}_{i=1}^{n}$ with arbitrary density can be normalized (similarly to the diffusion map with $\alpha = 1$) to approximate the differential operator
\begin{equation}
 \label{eqn:generator}
 \mathcal{L}f(x) = b(x) \nabla f(x) + {\sf tr}{[A(x) \nabla\nabla f(x)]}.
\end{equation}
Equation~\eqref{eqn:generator} describes a broader class of
 generators than \eqref{eqn:mgen}, which is advantageous. 
 On the downside, the local kernel diffusion map
 algorithm of~\cite{banisch2020} requires drift estimates at all data points, as well as a second kernel 
 $k_{\tilde{\epsilon}}(x,y)$ with additional scaling parameter $\tilde{\epsilon}$
in order to normalize the density of the dataset. 
As a result, implementation of the local kernel approach requires adjustment of two scaling parameters $\epsilon$ and $\tilde{\epsilon}$, which can be challenging.

We are primarily interested in the reversible dynamics in collective variables coming from MD simulations.
On the other hand, the density of the data is far from being uniform, and may change by orders of magnitude thereby complicating the tuning of scaling parameters.
Therefore, we choose to use {\tt mmap} rather than the local kernel approach. 

%
%

\section{Theoretical results}
\label{sec:main}
{
Our goal is to prove that the {\tt mmap} algorithm (Algorithm \ref{alg:mmap}) with $\alpha = \sfrac{1}{2}$ approximates the generator \eqref{eqn:mgen} for SDE \eqref{eqn:cvsde}, the overdamped Langevin dynamics in collective variables. First we show that not every symmetric positive definite smooth matrix function $M(x)$ admits the decomposition $JJ^\top(x)$ where $J(x)$ is the Jacobian matrix function for some smooth vector-function. This will justify the lengthy calculation conducted in our proof of the main theorem (Theorem \ref{thm:main-theorem} below). Next, we derive the family of differential operators approximated by {\tt mmap} with an arbitrary $\alpha\in\mathbb{R}$ (Theorem \ref{thm:main-theorem}). Finally, we evaluate the resulting differential operator at $\alpha = \sfrac{1}{2}$ and show that it is the generator for \eqref{eqn:cvsde} (Corollary \ref{thm:main_corollary}).
}

{
\subsection{Not every diffusion matrix is associated with a variable change}
\label{sec:MnotJJ}
The fact that not every smooth symmetric positive definite matrix function $M(x)$ can be decomposed as
\begin{equation}
\label{eqn:Mdec}
M(x) = J(x)J(x)^\top\quad{\rm where}\quad J = \left(\frac{\partial f_i}{\partial x_j}\right)_{i,j = 1}^d
\end{equation}
for some smooth vector-function $f:\Omega\rightarrow\R^d$, where $\Omega$ is an open set in $\mathbb{R}^d$, is not new but it is not widely known. The non-existence of decomposition \eqref{eqn:Mdec} is pointed out for the position-dependent diffusion matrix in the Moro-Cardin 2D example \cite{MoroCardin1998} by  M. Johnson and G. Hummer \cite{JohnsonHummer2012} with a reference to the textbook by H. Risken \cite{risken1996fokker} (Section 4.10), where the general criterion for the existence of decomposition \eqref{eqn:Mdec} consisting in vanishing a certain complicated differential form is presented. 

Here we will give a simple proof of the fact that not every symmetric positive definite smooth matrix function admits decomposition \eqref{eqn:Mdec} by establishing a necessary condition for a class of $2\times 2$ symmetric positive definite matrix functions
\begin{equation}
\label{eqn:Mclass}
 M(x,y) = m^2(x,y)I_{2\times 2},\quad m(x,y)\in C^2(\Omega),\quad m(x,y) > 0~\forall(x,y)\in\Omega,
\end{equation}
for decomposition \eqref{eqn:Mdec} to exist. Precisely, the necessary condition requires the function $\log m(x,y)$ to be \emph{harmonic}. Moreover, if the open set $\Omega$ is simply connected, this condition is also sufficient for the class \eqref{eqn:Mclass}. 
\begin{theorem}
\label{thm:lem6}
Let $M(x,y)$ be a symmetric positive definite matrix function of the form \eqref{eqn:Mclass} where $m(x,y)$ is a positive twice continuously differentiable function in an open set $\Omega\subset \mathbb{R}^2$ and $I_{2\times 2}$ is a $2\times 2 $ identity matrix. Suppose that $M$ admits decomposition $M(x,y) = JJ^\top(x,y)$ where $J(x,y)$ is the Jacobian matrix of some twice continuously differentiable vector-function $f:\Omega\rightarrow\mathbb{R}^2$. Then $\log m(x,y)$ must be harmonic, i.e.,
\begin{equation}
\label{eqn:harmonic}
\left(\frac{\partial^2}{\partial x^2} +\frac{\partial^2}{\partial y^2} \right)\log m(x,y) = 0\quad\forall(x,y)\in\Omega.
\end{equation}
Moreover, if the open set $\Omega$ is simply connected, then  \eqref{eqn:harmonic} is also a sufficient condition for the existence of decomposition \eqref{eqn:Mdec}. If $\Omega$ is not simply connected, \eqref{eqn:harmonic} is not a sufficient condition. 
\end{theorem}
A proof of Theorem \ref{thm:lem6} is given in \ref{appendix:not_every_diffusion}.

Thus, any matrix function of the form \eqref{eqn:Mclass} where $\log m(x,y)$ is not harmonic in $\Omega$ does not admit  decomposition \eqref{eqn:Mdec} in $\Omega$. For example, the function $m$ in the Moro-Cardin example \cite{MoroCardin1998} 
\begin{equation}
\label{eqn:MoroCardin}
m(x,y) = \left(1+e^{-\frac{x^2+y^2}{2}}\right)^{-1/2}
\end{equation}
is such that its logarithm is not harmonic:
$$
\Delta\log m(r) = \frac{1}{2}\left[\frac{(2-r^2)(1+e^{-r^2/2})e^{-r^2/2}+r^2e^{-r^2}}{(1+e^{-r^2/2})^2}\right],\quad r=\sqrt{x^2+y^2}.
$$

Furthermore, any $d\times d$ twice continuously differentiable matrix function $M(x,y)$ that has a principal $2\times 2$ submatrix of the form \eqref{eqn:Mclass} where $\log m(x,y)$ is not harmonic in $\Omega$ does not admit decomposition \eqref{eqn:Mdec}.
}

 \subsection{The family of differential operators approximated by {\tt mmap}}
We will adopt three technical assumptions.
The first one deals with the space of collective variables $x$:
{
\begin{assumption}
\label{Ass1}
The range of $x$ representing the set of collective variables constitutes a $d$-dimensional manifold $\mathcal{M}$ which is either $\mathbb{R}^d$, or the $d$-dimensional torus $\mathbb{T}^d$, or a direct product of torus $\mathbb{T}^{k}$ and $\mathbb{R}^{d-k}$ . In all cases, $\mathcal{M}$ is of the form
\begin{equation}
\label{eqn:manifold}
\mathcal{M} =\mathbb{T}^{k}\times\mathbb{R}^{d-k} ,
\quad\text{for some}\quad 0\le k\le d.
\end{equation}
By the torus $\mathbb{T}^k$, $1\le k\le d$, we mean the ``flat" torus, i.e., the direct product of intervals with periodic boundary conditions, i.e.,
\begin{align*}
&\mathbb{T}^k\times \mathbb{R}^{d-k} = [a_1,b_1]\times[a_2,b_2]\times\ldots\times[a_k,b_k]\times \mathbb{R}^{d-k},\\
&(x_1,\ldots,x_{l-1},a_l,x_{l+1},\ldots,x_d) = (x_1,\ldots,x_{l-1},b_l,x_{l+1},\ldots,x_d),
~~1\le l\le k.
\end{align*}
The metric on such a torus is locally Euclidean \cite{NikShaf}, i.e., within any open ball of radius 
\begin{equation}
\label{eqn:Reuc}
R_{Euc}: = \min_{1\le l\le k}\frac{|b_l-a_l|}{2}.
\end{equation} 
Therefore, the metric on $\mathcal{M}$ is Euclidean if $\mathcal{M}=\mathbb{R}^d$ or locally Euclidean within any ball of radius $R_{Euc}$ if $\mathcal{M} =  \mathbb{T}^{k}\times\mathbb{R}^{d-k}$ for some $1\le k\le d$.
\end{assumption}
}

Assumption \ref{Ass1} is  nonrestrictive in view of chemical physics applications, as usually collective variables are dihedral angles or distances between certain atoms.
For example, the alanine dipeptide molecule is represented in two or four dihedral angles, $\mathcal{M} = \mathbb{T}_2$ or $ \mathbb{T}_4$, a 2D or a 4D torus respectively. 
Assumption \ref{Ass1} allows us to prove our main theoretical result from scratch using only elementary tools.

The second and third assumptions impose integrability and differentiability conditions on the diffusion matrix $M(x)$ and a class of functions $f:\mathcal{M}\rightarrow\R$ 
to which we apply the constructed family of operators. 
We need the following definition:
\begin{definition}
We say that a continuous function $f:\R^d\rightarrow \R$ grows not faster than a polynomial as $\|x\|\rightarrow\infty$ if
there exist constants $A\ge 0$, $B\ge 0,$ and $l\in\mathbb{N}$ such that 
$$
|f(x)|\le A + B\|x\|^l \quad\forall x\in\R^d.
$$
\end{definition}
\begin{assumption}
\label{Ass2}
The diffusion matrix $M(x)$ is symmetric positive definite. Its inverse $M^{-1}(x)$ is a four-times continuously differentiable matrix-valued function $M^{-1}:\mathcal{M}\rightarrow\R^{d\times d}$ and the determinant of $M^{-1}(x)$ is bounded away from zero. 
If the manifold $\mathcal{M}$ is unbounded {(i.e., $0\le k\le d-1$ in \eqref{eqn:manifold})}, then the entries $(M^{-1})_{ij}(x)$ and their first derivatives $\frac{\partial (M^{-1})_{ij}(x)}{\partial x_{\ell}}$ grow not faster than a polynomial as $\|x\|\rightarrow\infty$.
\end{assumption}
\begin{assumption}
\label{Ass3}
The function $f(x)$ is four-times continuously differentiable. If $\mathcal{M}$ is unbounded then $f(x)$ grows not faster than a polynomial as $\|x\|\rightarrow\infty$.
\end{assumption}

Now we are ready to formulate our convergence results for {\tt mmap}.
\begin{theorem}
\label{thm:main-theorem}
Suppose a manifold $\mathcal{M}$ and a diffusion matrix $M(x):\mathcal{M}\rightarrow\R^{d\times d}$ satisfy Assumptions \ref{Ass1} and  \ref{Ass2} respectively.
Let $\alpha\in\R$ be fixed and the kernel $k_{\epsilon,\alpha}$ be the Mahalanobis kernel \eqref{eqn:mkernel}, and 
the operator $\mathcal{L}_{\epsilon,\alpha}$ be constructed according to  \eqref{eqn:mmap_density},
\eqref{eqn:rnkernel},
\eqref{eqn:rhoea},
\eqref{eqn:Pea}, and 
\eqref{eqn:Lea}.
Then for any function $f(x):\mathcal{M}\rightarrow \R$ satisfying Assumption \eqref{Ass3} we have
\begin{align}
    \label{eqn:limiting_generator}
\lim_{\epsilon\rightarrow 0} \mathcal{L}_{\epsilon, \alpha}f(x) &= 
\frac{1}{2}\left(
\frac{{\sf tr}
\left(M 
\left[ \nabla\nabla \left[\rho^{1-\alpha}f\right] - f\nabla\nabla \rho^{1-\alpha}\right]   
\right)
}{\rho^{1-\alpha}} 
\right) 
\nonumber\\
&+
\frac{\alpha}{2}\left(
\frac{{\sf tr}
\left(M
\left[ 
\nabla\left[\rho^{1-\alpha}f\right]  - 
f\nabla\left[\rho^{1-\alpha}\right] 
\right]   \frac{\nabla|M|^{-\top}}{|M|^{-1}}
\right)
}{\rho^{1-\alpha}} 
 \right) \nonumber \\
 &-\left(\frac{ [\nabla(f\rho^{1-\alpha})-f\nabla(\rho^{1-\alpha})]^\top \omega_1]}{\rho^{1-\alpha}}
\right)\quad \forall x\in\mathcal{M},
\end{align}
where $|M|$ denotes the determinant of $M$, and $\omega_1(x)$ is a vector-valued function defined by
\begin{equation}
\label{eqn:omega1def}
\omega_{1,i}(x):=
 \frac{|M(x)|^{-1/2}}{(2\pi\epsilon)^{d/2}} \frac{1}{4\epsilon^2}\int_{\mathcal{M}}e^{-\frac{z^\top M(x)^{-1} z}{2\epsilon} }z_i \left[z^\top [\nabla M^{-1}(x){z}]z\right]dz.
\end{equation}
\end{theorem}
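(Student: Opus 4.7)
The plan is to establish the limit by applying Laplace's method (small-$\epsilon$ Gaussian asymptotics) to the two integrals $\mathcal{G}_\epsilon(f\rho\rho_\epsilon^{-\alpha})$ and $\mathcal{G}_\epsilon(\rho\rho_\epsilon^{-\alpha})$ that build $\mathcal{L}_{\epsilon,\alpha}$ through \eqref{eqn:Lea}. Under Assumption \ref{Ass1}, a neighborhood of any $x\in\mathcal{M}$ is isometric to a subset of $\R^d$, so I can work with the displacement $z = y-x$. Since the Mahalanobis kernel decays as a Gaussian with covariance $\epsilon M(x)$, contributions from outside a ball of radius $\sqrt{\epsilon}\log(1/\epsilon)$ are exponentially small in $\epsilon$ and can be discarded using the polynomial growth bounds of Assumption \ref{Ass2}, reducing each $\mathcal{G}_\epsilon$ integral to a Gaussian-weighted integral over $\R^d$ to all polynomial orders in $\epsilon$.

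The core step is to prove, for any smooth $g$ satisfying Assumption \ref{Ass2}, an expansion of the form $\mathcal{G}_\epsilon g(x) = (2\pi\epsilon)^{d/2}|M(x)|^{1/2}\bigl[g(x) + \epsilon\,\mathcal{L}_1 g(x) + O(\epsilon^2)\bigr]$ for a suitable operator $\mathcal{L}_1$. To do this, I Taylor expand $M^{-1}(x+z) = M^{-1}(x) + [\nabla M^{-1}(x)]\!\cdot\! z + \tfrac{1}{2}[\nabla\nabla M^{-1}(x)]\!:\!zz + O(\|z\|^3)$ so that the kernel factors as
\begin{equation*}
k_\epsilon(x,x{+}z) = e^{-\frac{1}{2\epsilon}z^\top M^{-1}(x) z}\exp\!\Bigl(-\tfrac{1}{4\epsilon}z^\top[\nabla M^{-1}(x)\!\cdot\! z]z - \tfrac{1}{8\epsilon}z^\top[z^\top\nabla\nabla M^{-1}(x) z]z + \cdots\Bigr),
\end{equation*}
and Taylor expand $g(x+z) = g(x) + \nabla g(x)\!\cdot\! z + \tfrac{1}{2}z^\top\nabla\nabla g(x)\,z + \cdots$. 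Expanding the second exponential as a power series and collecting terms as $(2\epsilon)^{-1}z^\top M^{-1}(x)z$-weighted polynomials in $z$, I use the Gaussian moment identities (after $u=z/\sqrt\epsilon$) to integrate term by term. Because odd moments vanish, the surviving $O(\epsilon)$ contributions come from (i) the quadratic Taylor term of $g$, producing $\tfrac\epsilon2\mathrm{tr}(M(x)\nabla\nabla g(x))$; (ii) the cross-term between $\nabla g(x)\!\cdot\! z$ and the cubic-in-$z$ correction in the kernel exponent, producing exactly $-\epsilon\,\omega_1(x)^\top\nabla g(x)$ with $\omega_1$ as defined in \eqref{eqn:omega1def}; and (iii) purely $g(x)$-multiplicative contributions coming from the quartic-in-$z$ exponent correction together with $\tfrac12((\text{cubic})/4\epsilon)^2$, which depend only on $M(x)$ and its derivatives.

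With this expansion in hand, I apply it to $g = \rho$ to read off $\rho_\epsilon(x) = (2\pi\epsilon)^{d/2}|M(x)|^{1/2}\rho(x)(1 + O(\epsilon))$, hence $\rho_\epsilon^{-\alpha}(x) = (2\pi\epsilon)^{-d\alpha/2}|M(x)|^{-\alpha/2}\rho^{-\alpha}(x)(1+O(\epsilon))$. Setting $\tilde h := |M|^{-\alpha/2}\rho^{1-\alpha}$, the $(2\pi\epsilon)^{\pm d\alpha/2}$ prefactors cancel between numerator and denominator of \eqref{eqn:Lea}, and the remaining $(1+O(\epsilon))$ factors contribute only at $O(\epsilon^2)$ to the ratio, hence disappear after multiplication by $1/\epsilon$. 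I therefore arrive at
\begin{equation*}
\lim_{\epsilon\to0}\mathcal{L}_{\epsilon,\alpha}f(x) = \frac{\mathcal{L}_1(f\tilde h)(x) - f(x)\,\mathcal{L}_1(\tilde h)(x)}{\tilde h(x)},
\end{equation*}
and the purely $g$-multiplicative contributions of type (iii) above cancel identically in this difference. Substituting the formula for $\mathcal{L}_1$ and using the identity $\nabla\tilde h/\tilde h = \tfrac{\alpha}{2}\,\nabla|M|^{-1}/|M|^{-1} + \nabla\rho^{1-\alpha}/\rho^{1-\alpha}$ (from $\tilde h = |M|^{-\alpha/2}\rho^{1-\alpha}$) together with the product rule for $\nabla(f\tilde h)$ and $\nabla\nabla(f\tilde h)$, the three groups of terms in the statement separate out: the trace term from $\tfrac12\mathrm{tr}(M\nabla\nabla \cdot)$, the $\alpha/2$-weighted $\nabla|M|^{-1}$ term from the $|M|^{-\alpha/2}$ factor in $\tilde h$, and the $\omega_1$ term from (ii).

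The main obstacle is the bookkeeping of the $O(\epsilon)$ contributions. The cubic exponent correction is naively of size $\sqrt\epsilon$ after rescaling, yet it contributes at $O(\epsilon)$ in two genuinely different ways (coupled to $\nabla g$, and squared coupled to $g(x)$), while the quartic exponent correction also contributes at $O(\epsilon)$. One must show that the purely $g(x)$-multiplicative pieces assemble into a quantity that is the same whether one takes $g=\rho\rho_\epsilon^{-\alpha}$ or $g=f\rho\rho_\epsilon^{-\alpha}$, so that they cancel in the difference $\mathcal{L}_1(f\tilde h) - f\mathcal{L}_1(\tilde h)$; this is what makes only $\omega_1$, the trace, and the $|M|$-gradient pieces survive. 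Conceptually, the cancellation reflects the fact that the generator \eqref{eqn:mgen} kills constants, and the entire proof reduces to verifying this cancellation carefully through the Gaussian moment identities.
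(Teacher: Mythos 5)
Your proposal is correct and follows essentially the same route as the paper's proof in Appendix~\ref{appendix:main_proof}: an exponential tail cutoff justified by the polynomial-growth hypotheses (the paper's Lemma~\ref{thm:lem0}), a Gaussian-moment expansion of $\mathcal{G}_{\epsilon}$ obtained by Taylor-expanding $M^{-1}(x+z)$ in the exponent (Lemma~\ref{thm:lem2}, with your contributions (i), (ii), (iii) matching the paper's ${\sf tr}(M\nabla\nabla f)$, $\omega_1$, and $\omega_2$ terms), and cancellation of the purely multiplicative pieces in the ratio defining $\mathcal{P}_{\epsilon,\alpha}$. The only difference is bookkeeping: you absorb the $|M(y)|^{-\alpha/2}$ factor from the right-normalization into a single auxiliary density $\tilde h$ and recover the $\nabla|M|^{-1}$ term via the product rule at the end, whereas the paper Taylor-expands $|\tilde M(y)|^{\alpha/2}$ as a separate integral (equation~\eqref{eqn:qmaTaylor}); both yield the stated formula.
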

A proof of this theorem is done by a direct calculation of limit \eqref{eqn:limiting_generator}. 
It is carried out from scratch and involves only elementary tools from linear algebra and multivariable calculus. 
It is found in~\ref{appendix:main_proof}.
We remark that, in turn, the corresponding discrete operator applied to $f(x)$ discretized to a point cloud drawn from the invariant density $\rho(x)$ and with $L_{\epsilon,\alpha}[f]$ defined by
\eqref{eqn:disc_pe},
\eqref{eqn:Kea},
\eqref{eqn:discrete_gen}
 converges pointwise with probability one to $\mathcal{L}_{\epsilon,\alpha}f$ as the number of data points tends to infinity.

Equation \eqref{eqn:limiting_generator} defines a family of differential operators parametrized by $\alpha\in\R$. 
Since our goal is to compute committors, we are primarily concerned with approximating the generator \eqref{eqn:mgen} 
for the overdamped Langevin SDE in collective variables \eqref{eqn:cvsde}.
Setting $\alpha=\sfrac{1}{2}$ approximates the generator that we need:
\begin{corollary}
    \label{thm:main_corollary}
Let $\mathcal{M}$ and $M(x)$ be as in Theorem \ref{thm:main-theorem}.  
Suppose that the invariant density $\rho(x)$ takes the form of the Gibbs distribution $\rho(x) = Z^{-1} e^{-\beta F(x)}$ 
for free energy $F,$ temperature parameter $\beta^{-1},$ and normalizing constant $Z = \int_{\mathcal{M}} e^{- \beta F(x)} dx$. 
Then for $\alpha=\sfrac{1}{2}$ the limit \eqref{eqn:limiting_generator} reduces to
\begin{equation}
\label{eqn:main-eq}
\lim_{\epsilon\rightarrow 0} \mathcal{L}_{\epsilon,\sfrac{1}{2}} f(x) = \frac{\beta}{2}\mathcal{L}f(x)\quad\forall x\in\mathcal{M},
\end{equation}
where 
\begin{equation}
\label{eqn:Lgen}
\mathcal{L}f = \left(- M\nabla F + \beta^{-1}\left(\nabla\cdot M\right)\right)^\top\nabla f + \beta^{-1}{\sf tr}[M\nabla\nabla f]
\end{equation}
is the generator for the SDE 
\begin{equation}
\label{eqn:main-sde}
dx_t =\left[ -M(x_t)\nabla F(x_t) + \beta^{-1}\nabla\cdot M(x_t)\right]dt +\sqrt{2\beta^{-1}}M^{1/2}(x_t)dw_t.
\end{equation}
\end{corollary}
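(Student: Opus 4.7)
The plan is to specialize \eqref{eqn:limiting_generator} to $\alpha=\sfrac{1}{2}$ with $\rho(x)=Z^{-1}e^{-\beta F(x)}$, simplify each of its three summands using the product rule and Gaussian moment identities, and verify that they assemble into $\tfrac{\beta}{2}\mathcal{L}f$ with $\mathcal{L}$ as in \eqref{eqn:Lgen}. The key algebraic simplification at this value of $\alpha$ is that $\rho^{1-\alpha}=\rho^{1/2}$ satisfies $\nabla\rho^{1/2}/\rho^{1/2}=-(\beta/2)\nabla F$, so by the product rule $\nabla[\rho^{1/2}f]-f\nabla\rho^{1/2}=\rho^{1/2}\nabla f$ and
\[
\nabla\nabla[\rho^{1/2}f]-f\nabla\nabla\rho^{1/2}=\rho^{1/2}\nabla\nabla f+\nabla\rho^{1/2}(\nabla f)^\top+\nabla f(\nabla\rho^{1/2})^\top.
\]
Using the symmetry of $M$, the first (Hessian) summand of \eqref{eqn:limiting_generator} collapses to $\tfrac{1}{2}{\sf tr}(M\nabla\nabla f)-\tfrac{\beta}{2}(\nabla f)^\top M\nabla F$, already providing two of the three pieces of $\tfrac{\beta}{2}\mathcal{L}f$. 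The second summand becomes $\tfrac{1}{4}(\nabla f)^\top M w$ with $w:=\nabla|M|^{-1}/|M|^{-1}=-\nabla\log|M|$, and the third summand becomes $-(\nabla f)^\top\omega_1$.

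The heart of the proof is evaluating $\omega_1$ in \eqref{eqn:omega1def}. Recognizing the normalized Gaussian weight as the density of $z\sim\mathcal{N}(0,\epsilon M(x))$, I would apply Wick's (Isserlis') theorem
\[
\mathbb{E}[z_iz_jz_kz_l]=\epsilon^2\bigl(M_{ij}M_{kl}+M_{ik}M_{jl}+M_{il}M_{jk}\bigr)
\]
to the quartic integrand $z_iz_jz_kz_l\,\partial_l(M^{-1})_{jk}$, obtaining three Wick contractions. Using the matrix-calculus identity $\partial_l M^{-1}=-M^{-1}(\partial_l M)M^{-1}$ together with $MM^{-1}=I$, the first two contractions each telescope to $-(\nabla\cdot M)_i$, while Jacobi's formula $\partial_l\log|M|={\sf tr}(M^{-1}\partial_l M)$ reduces the third to $-[M\nabla\log|M|]_i$. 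This yields
\[
\omega_{1,i}(x)=-\tfrac{1}{2}(\nabla\cdot M)_i-\tfrac{1}{4}[M\nabla\log|M|]_i.
\]

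Assembling the three summands, the $\tfrac{1}{4}M\nabla\log|M|$ contribution from $-(\nabla f)^\top\omega_1$ exactly cancels the second summand, while the $-\tfrac{1}{2}\nabla\cdot M$ contribution supplies the missing piece $\tfrac{1}{2}(\nabla f)^\top\nabla\cdot M$. The total becomes $\tfrac{\beta}{2}\bigl[(-M\nabla F+\beta^{-1}\nabla\cdot M)^\top\nabla f+\beta^{-1}{\sf tr}(M\nabla\nabla f)\bigr]=\tfrac{\beta}{2}\mathcal{L}f$, establishing \eqref{eqn:main-eq}. I expect the main obstacle to be the bookkeeping in the $\omega_1$ computation: one must correctly identify which index contracts with which in each of the three Wick pairings so that the two copies of $\nabla\cdot M$ and the single copy of $M\nabla\log|M|$ emerge from the collapse, after which the rest is mechanical substitution.
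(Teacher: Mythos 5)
Your proposal is correct and follows essentially the same route as the paper's proof: the same term-by-term specialization at $\alpha=\sfrac{1}{2}$, the same product-rule collapse of the Hessian term, Jacobi's formula for the $\nabla\log|M|$ contribution, the same closed form $\omega_{1,i}=-\tfrac12(\nabla\cdot M)_i-\tfrac14[M\nabla\log|M|]_i$, and the same cancellation of the log-determinant pieces between the second and third summands. The only cosmetic difference is that you evaluate the quartic Gaussian integral by applying Isserlis'/Wick's theorem directly to $z\sim\mathcal{N}(0,\epsilon M)$, whereas the paper first whitens via $t=\epsilon^{-1/2}M^{-1/2}z$ and computes the fourth moments of the standard Gaussian by hand; the three contractions you identify are exactly the paper's $2R_k+I\,{\sf tr}R_k$.
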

A proof of Corollary \ref{thm:main_corollary} is found in~\ref{appendix:alpha0.5}. { Our main interest is in solving the committor PDE \eqref{eqn:comm_pde}. Our approach consists in approximating the generator $\mathcal{L}$ in  \eqref{eqn:comm_pde} by the matrix operator $L_{\epsilon,1/2}$ which converges to $\mathcal{L}_{\epsilon,1/2}$ as the number of data points $n$ tends to infinity as $O(n^{-1/2})$.}

Finally, we remark that the use of the symmetric Mahalanobis kernel \eqref{eqn:mkernel} is essential for the convergence of 
$\mathcal{L}_{\epsilon,\sfrac{1}{2}} f(x) $ to the generator \eqref{eqn:Lgen}. 
If one processes data sampled from a long trajectory of SDE \eqref{eqn:main-sde} with {\tt dmap}, i.e., 
implements the isotropic Gaussian kernel \eqref{eqn:iso_gaussian} in the diffusion map algorithm,
one obtains an approximation to the generator for the diffusion process governed by 
$$
dx_t =-\nabla F(x_t)  +\sqrt{2\beta^{-1}}dw_t
$$
which has the same invariant density $Z^{-1}\exp(-\beta F(x))$ as  \eqref{eqn:main-sde}  but a different drift and a different diffusion matrix. 
If one replaces the half-sum $\tfrac{1}{2}(M(x)+M(y))$ in  \eqref{eqn:mkernel} with $M(x)$, all terms containing derivatives of 
$M$ in \eqref{eqn:limiting_generator} do not arise, and only the first term in \eqref{eqn:limiting_generator} remains.
For $\alpha=\sfrac{1}{2}$ this yields $-M\nabla F\cdot\nabla f + \beta^{-1}{\sf tr}[M\nabla\nabla f]$, the generator for the dynamics 
$$
dx_t = -M(x_t)\nabla F(x_t)dt +\sqrt{2\beta^{-1}}M^{1/2}(x_t)dw_t,
$$
which approximates \eqref{eqn:main-sde} only if $M$ is constant or $\beta^{-1}$ is small.
In our examples presented in the next section, $M$ varies considerably and $\beta^{-1}$ is not so small, rendering the term $\beta^{-1}\nabla\cdot M(x_t)$ non-negligible.

%
%
\section{Examples}
\label{sec:examples}
In this section, we test {\tt mmap} on two examples: alanine dipeptide and Lennard-Jones-7 in 2D.
The results obtained with {\tt mmap} will be validated by comparing them to results of other established methods and contrasted to those 
of the diffusion map with isotropic Gaussian kernel ({\tt dmap}).
In view of the remark at the end of Section \ref{sec:main}, the fact that the committors obtained using {\tt dmap} 
are significantly less accurate than the {\tt mmap} committors is unsurprising.

%
%

\subsection{Transitions between metastable states C5 and C7eq in alanine dipeptide}
\label{sec:aladip}

\begin{figure}[htbp]
\begin{center}
(a)\includegraphics[width = 0.45\textwidth]{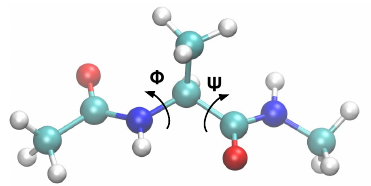}
(b)\includegraphics[width = 0.45\textwidth]{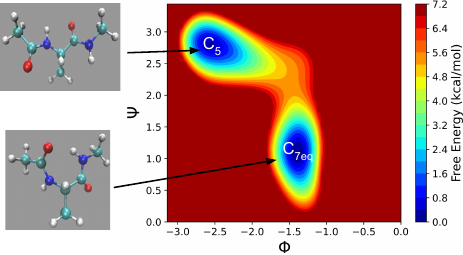}
\caption{        (a):
        Structure of alanine dipeptide and dihedral  angles $\Phi$ and  $\Psi$ serving as collective variables. 
        (b): Free energy surface of alanine dipeptide in vacuum at temperature $T=300$K in vicinity of C5 and C7eq minima, in $\Phi, \Psi$ coordinates.
}
\label{fig:aladip}
\end{center}
\end{figure}
Alanine dipeptide, a small biomolecule comprising  22 atoms, is  a popular test
example in chemical
physics~\cite{2006string,cameron2013estimation,plumed2,trstanova2020local}.
A typical set of collective variables effectively representing its motion consists of four or just two dihedral angles.
We choose the set of only two dihedral angles $\Phi$ and $\Psi$ shown in Figure~\ref{fig:aladip}(a). Their range comprises a two-dimensional torus, i.e the manifold $\mathcal{M}$ is $\mathbb{T}^{2}.$

\subsubsection{Obtaining input data}
\label{sec:ADdata}
\begin{figure}[htbp]
\begin{center}
 \includegraphics[width = 0.6\textwidth]{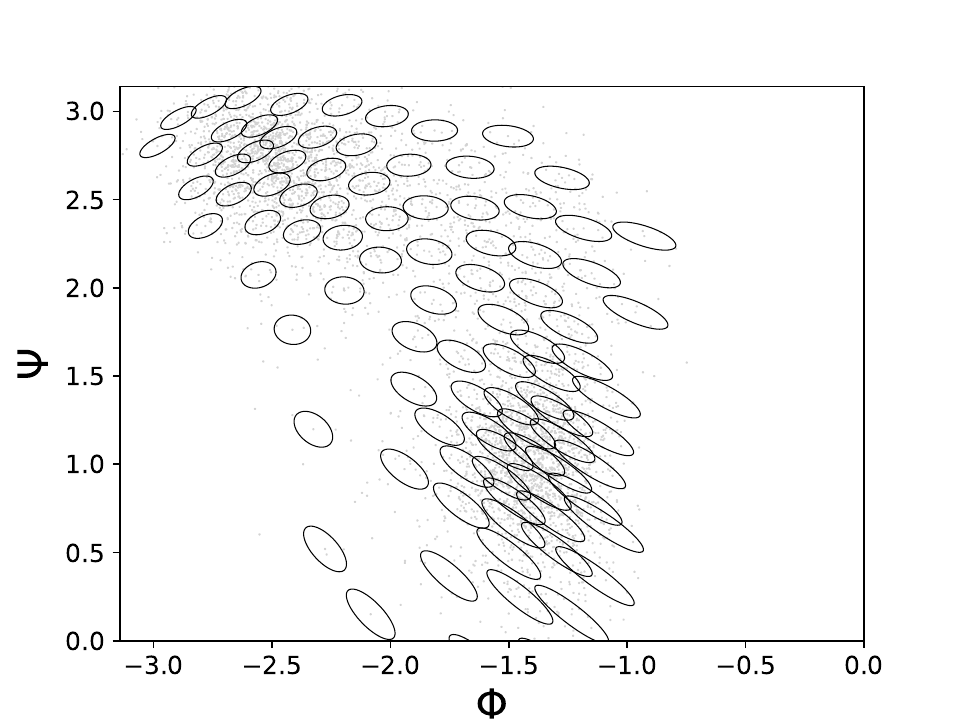}
\caption{Ellipses corresponding to the principal components of the estimated diffusion
        matrices for the alanine dipeptide data (faint grey dots). Each ellipse is plotted with
        center on the point whose diffusion matrix it represents. The ellipses are
        plotted on a representative subsampling of the trajectory data.}
\label{fig:c5c7_diffusion_tensors}
\end{center}
\end{figure}

We used a velocity-rescaling thermostat to set the temperature to 300K in a vacuum and ran a 1 nanosecond trajectory under constant number, volume, and temperature (NVT) conditions, integrating Newton's equations of motion with timestep 
$2$ femtoseconds using the molecular dynamics software GROMACS~\cite{van2005}.
For use with diffusion maps, we subsampled the trajectory at equispaced intervals of timesteps to obtain $n=5000$ data points $\{x_i\}_{i = 1}^{5000}$ with $x_i \in \mathbb{T}^{2}$. 
The diffusion matrices $M(x_i)$ were obtained { following the methodology of~\cite{2006string} (see~\ref{sec:diffusion_estimation})}
and are visualized in
Figure~\ref{fig:c5c7_diffusion_tensors}. 
The reactant and product sets $A$ and $B$ are the small ellipses centered at the 
the C5 and C7eq minima in the $(\Phi, \Psi)$-space shown in Figure \ref{fig:aladip}(b).
In $\Phi, \Psi$ coordinates the C5 and C7eq minima are $(-2.548, 2.744)$ and $(-1.419, 1.056)$ respectively, and the ellipses shown are the level sets of the free energy $F$ at $1.4$ kcal/mol.

To compare the committors computed via {\tt mmap} and {\tt dmap} 
with the one obtained by a traditional PDE solver, we discretized 
the range $[-\pi,\pi]^2$ of $(\Phi,\Psi)$ into a uniform square mesh $128\times 128$ as in \cite{cameron2013estimation}
and generated $M(x)$ and $\nabla F(x)$ using the procedure from~\cite{2006string}{, summarized in~\ref{sec:diffusion_estimation}.} We then posed a boundary-value problem for the committor PDE from \eqref{eqn:mgen}, 
\eqref{eqn:comm_pde} and solved it using a finite difference scheme with central second-order accurate approximations to the derivatives.

Often (see e.g. \cite{2006string,ren2019} and many other works) a much rarer transition in alanine dipeptide is studied: 
the one between the combined metastable state comprising C5 and C7eq 
and the metastable state called C7ax located near $\Phi=75^\circ$, $\Psi = -75^\circ$.
We chose the transition between  C5 and C7eq  for our tests because  it can be easily sampled at room temperature $T=300$K.
It is essential for {\tt mmap} to have sufficient 
data coverage of the transition region, and the data must be sampled from the invariant distribution.
The study of this transition gives us another benefit: unlike that for the transition between  
(C5,C7eq) and C7ax, the
free energy barrier between C5 and C7eq is not large in comparison with $k_bT$.
This renders the term $\beta^{-1}\nabla\cdot M(x)$ in SDE \eqref{eqn:cvsde} non-negligible which is nonzero if and only if $M(x)$ is nonconstant.
As a result, the contrast between the results of {\tt mmap} and {\tt dmap} is amplified.
We leave the task of upgrading {\tt mmap} to make it applicable to datasets obtained using enhanced sampling techniques for the future.


\subsubsection{Results and Validation}
\label{sec:c5c7_results}

We computed the committor using the {\tt mmap} and {\tt dmap} algorithms with a large range of values of the scaling parameter $\epsilon$. This range is naturally bounded from above  and below 
by the diameter of the point cloud and by the minimal distance between data points, respectively.
In addition, we computed the committor by solving the boundary-value problem for the committor PDE  using finite differences as mentioned in Section \ref{sec:ADdata} and took it as a ground truth $q_{true}$.
To quantify the error of the {\tt mmap} and {\tt dmap} committors, we evaluated the root-mean-square (RMS) error
$$
\text{RMS error} = \sqrt{\frac{\sum_{j=1}^n (q_{true}(x_j) - q_{approx}(x_j))^2}{n}},
$$
where $\{x_j\}_{j=1}^{n}$ are the data points. 
The finite difference solution $q_{true}$ was evaluated on the data through bilinear interpolation.
It is clear that the committor computed with diffusion maps cannot be expected to be accurate on the outskirts of the dataset where the data coverage is insufficient.
On the other hand, the {\tt mmap} and {\tt dmap} committors are exact at $A$ and $B$ by construction and highly accurate near them, 
and these are the regions containing the majority of data points as they are sampled from the invariant density.
We care the most about the accuracy of the {\tt mmap} and {\tt dmap} committors in the transition region. 
Therefore, we select the subset of points marked with magenta dots in Figure~\ref{fig:ADerr}(a).
The graphs of the RMS errors for  {\tt mmap} (red) and {\tt dmap} (blue) over this subset as functions of $\epsilon$ are displayed in Figure~\ref{fig:ADerr}(b). The epsilon values minimizing the RMS error of the {\tt mmap} and {\tt dmap} committor are $\epsilon=0.01$ and $\epsilon=0.003$ with RMS errors $0.014$ and $0.036$ respectively.
We know that the range of $\epsilon$ used for {\tt mmap} is shifted with respect to the range of {\tt dmap} due to the fact that the eigenvalues of $M^{-1}$ range from $1.16$ to $8.11$ and average to $4.06$. As a result, the optimal $\epsilon$ for {\tt mmap} is approximately larger than that for {\tt dmap} by a factor of $3.33$. Moreover, for all $\epsilon$-values in the overlap of ranges the error for {\tt mmap} is smaller than that of {\tt dmap}.
\begin{figure}[htbp]
\begin{center}
(a)\includegraphics[width=0.45\textwidth]{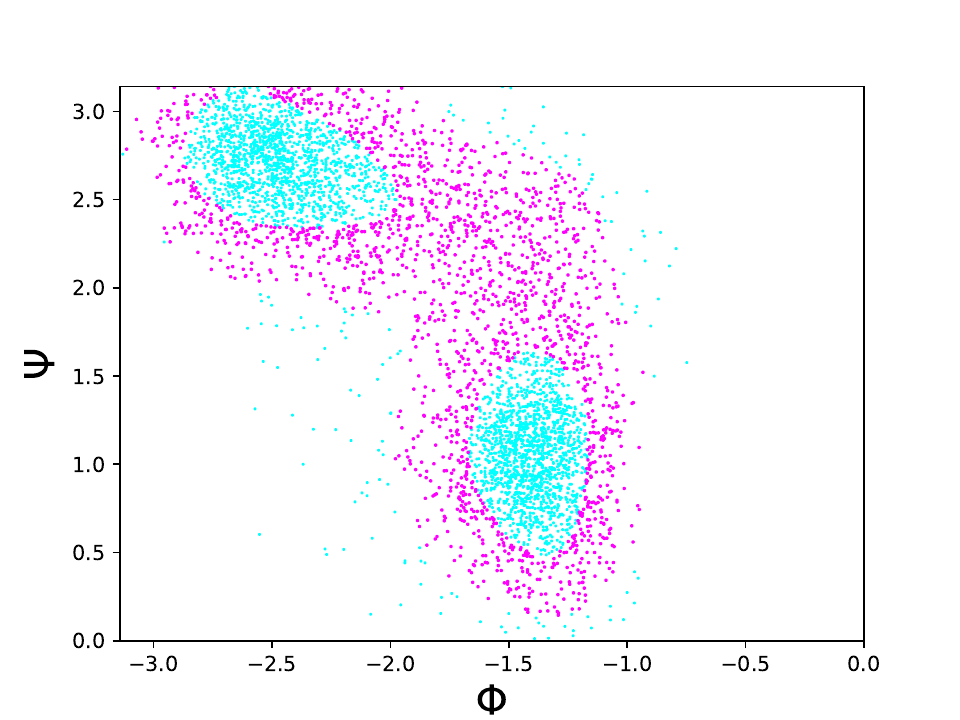}
(b)\includegraphics[width=0.45\textwidth]{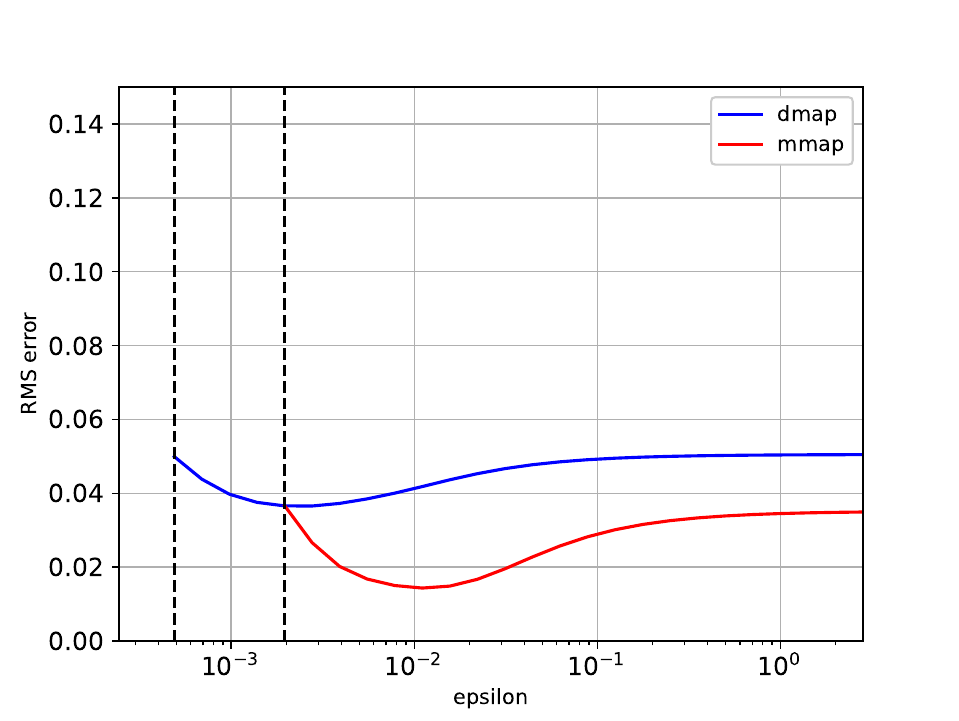}
\caption{(a): Alanine dipeptide dataset. Its subset lying in the region of interest marked with magenta is used for computing the RMS error.
(b): RMS errors for {\tt dmap} and {\tt mmap} committors as functions of the scaling parameter $\epsilon$.  The dotted lines indicate
        lower bound for $\epsilon$-values related to the minimal distance between data points.
}
\label{fig:ADerr}
\end{center}
\end{figure}
The level sets of the computed committor using {\tt mmap} and {\tt dmap} for the values of $\epsilon$ minimizing the error are shown, respectively, in Figure~\ref{fig:c5c7_committor}(a) and~\ref{fig:c5c7_committor}(b) with dashed lines. 
The solid lines are the corresponding level sets of the committor $q_{true}$ computed by finite differences.
The  level sets  of the {\tt mmap} committor closely match those of $q_{true}$, while the level sets of the {\tt dmap} committor notably deviate from them.
\begin{figure}[htbp]
\begin{center}
(a)\includegraphics[width=0.45\textwidth]{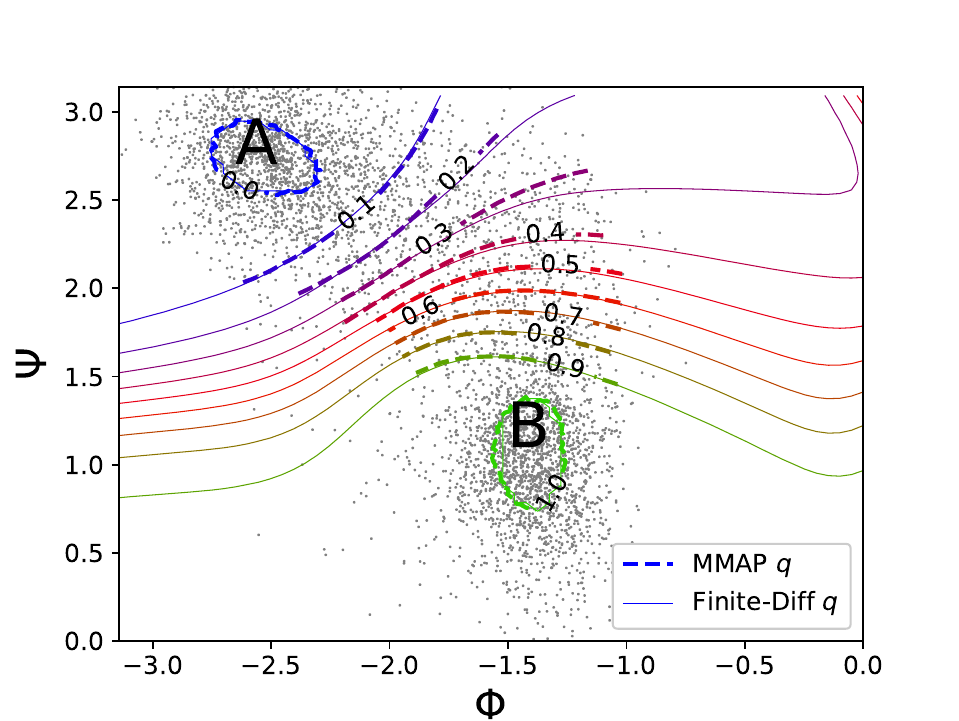}
(b)\includegraphics[width=0.45\textwidth]{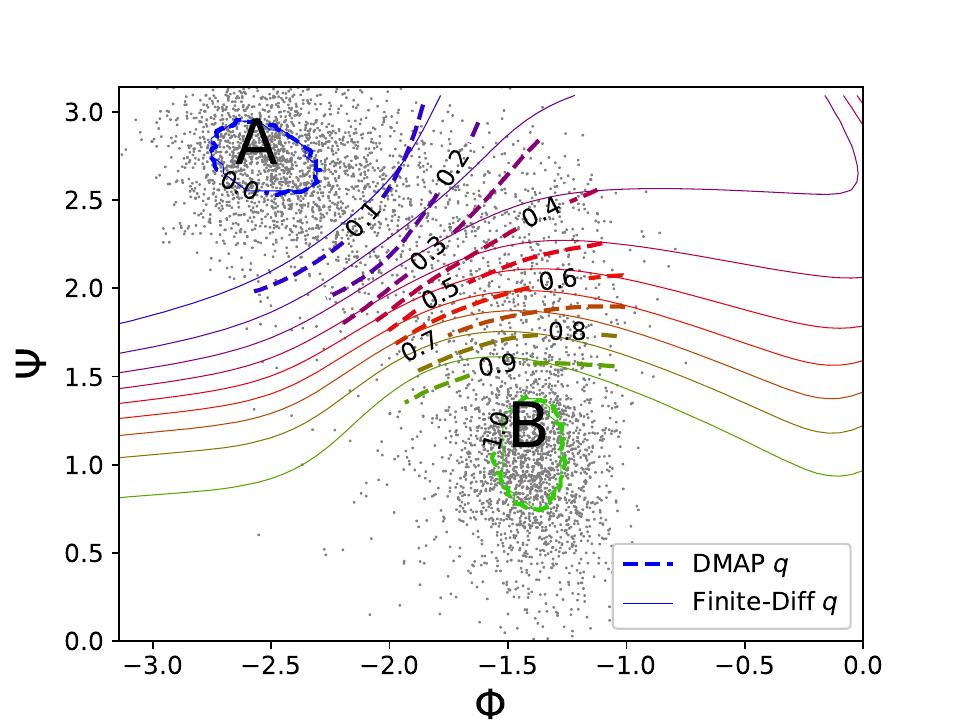}
\caption{
    Level sets for the approximate committor functions obtained from {\tt mmap} (a)
    and {\tt dmap} (b) on the point cloud (grey dots), with $A$ as the reactant region and $B$
    the product region. The dotted lines represent the committor level sets obtained by {\tt mmap} (a) and {\tt dmap} (b), while the solid lines depict  the committor level sets obtained by the
    finite-difference method.
}
\label{fig:c5c7_committor}
\end{center}
\end{figure}

As we have explained in Section \ref{sec:tpt} the committor allows us to compute the reactive current and the transition rate.
The calculation of the reactive current and the reaction rate is detailed in  \ref{appendix:current}.
The reactive currents computed using the {\tt mmap} and {\tt dmap} committors, respectively, are visualized in Figure~\ref{fig:ADcurr} (a) and (b). Notably, the intensity for the respective currents differs by an order of magnitude.
The corresponding reaction rates for {\tt mmap} and {\tt dmap} are, respectively, $\nu_{AB} = 0.092 \times 10^{-12}\text{s}^{-1}$ and  $\nu_{AB}=0.31 \times 10^{-12}\text{s}^{-1}$. 
To verify the rate, we ran 10 long trajectories and for each calculated the transition rate as the ratio of the number of transitions from $A$ to $B$ over the elapsed time. The mean rate over the trajectories is 
$\nu_{AB} = 0.093 \times 10^{-12}\text{s}^{-1}$ (standard deviation $0.003 \times 10^{-12}\text{s}^{-1})$ which is very close to the {\tt mmap} rate and notably differs from the {\tt dmap} rate. 
\begin{figure}[htbp]
\begin{center}
(a)\includegraphics[width=0.45\textwidth]{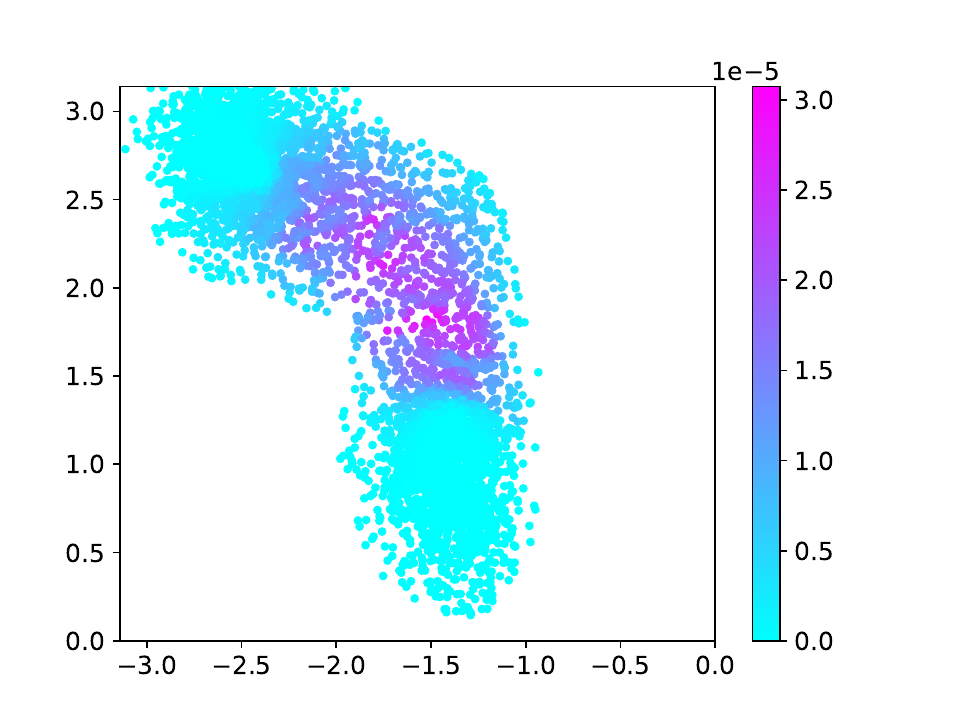}
(b)\includegraphics[width=0.45\textwidth]{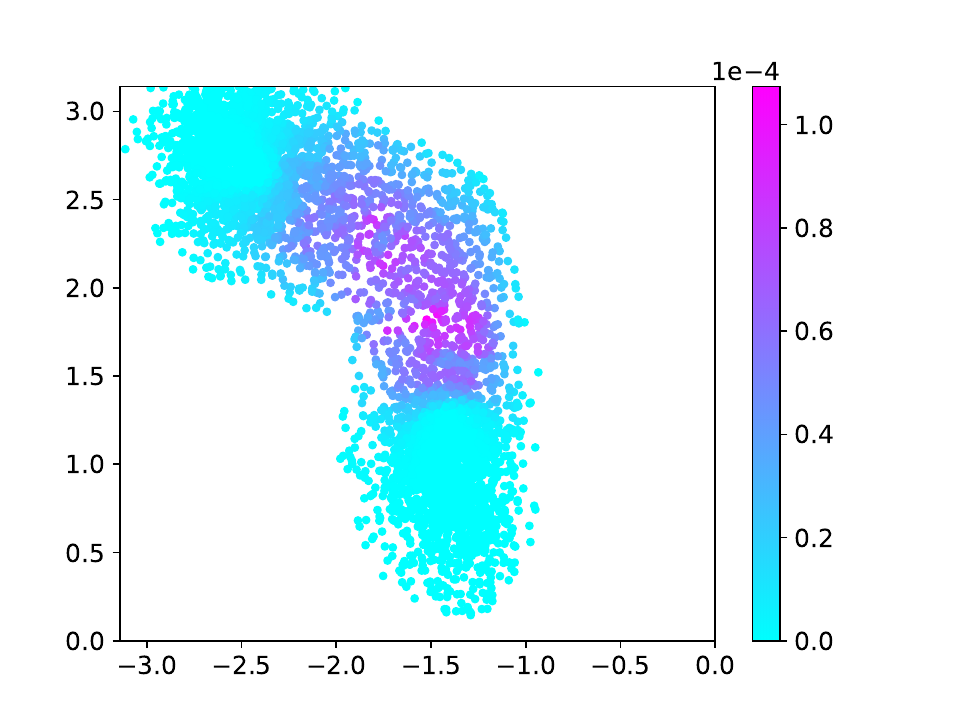}
\caption{ The intensity of the reactive current computed using the {\tt mmap} (a) and {\tt dmap} (b) committors.
}
\label{fig:ADcurr}
\end{center}
\end{figure}

%
%

\subsection{Transitions between the trapezoid and the hexagon in Lennard-Jones 7 in 2D}
\label{sec:LJ}

\begin{figure}[htbp]
\begin{center}
\includegraphics[width=0.8\textwidth]{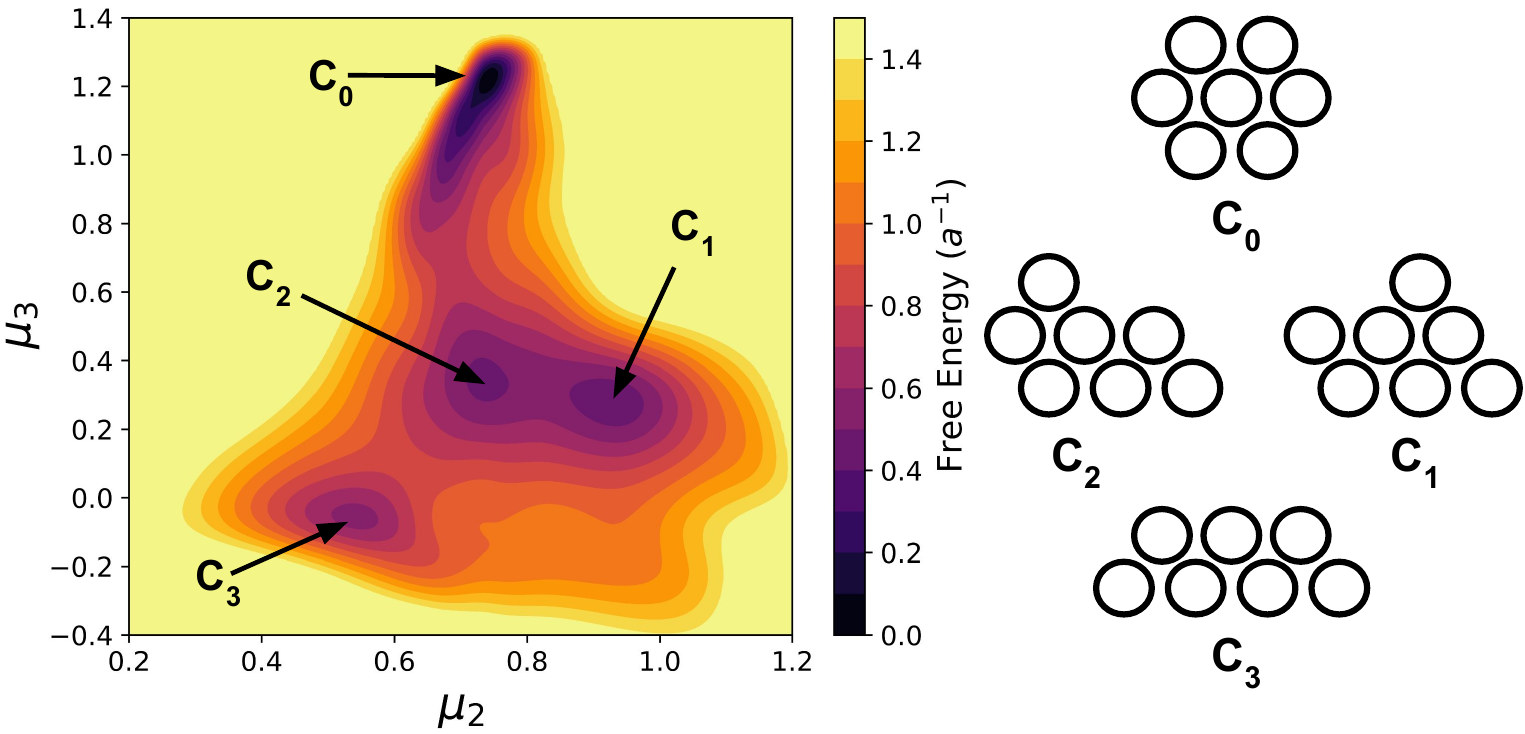}
\caption{ Free energy surface of LJ7 system with respect to 2nd and 3rd moment of coordination numbers CVs. 
The four minima  C$_k$, $k=0,1,2,3$, are marked in the free energy plot and depicted on the right.
}
\label{fig:LJ7_energy_clusters}
\end{center}
\end{figure}

The cluster of seven 2D particles interacting according to the Lennard-Jones pair potential
$$
V_{pair}(r) = 4a\left[ \Big(\frac{\sigma}{r}
\Big)^{12} - \Big(\frac{\sigma}{r}
\Big)^{6}\right]
$$
where $\sigma>0$ and $a>0$ are parameters controlling, respectively, range and strength of interparticle interaction,
has been another benchmark problem in chemical physics~\cite{ coifman2008diffusion, dellago1998efficient,wales2002discrete,passerone2001action}.
If the particles are treated as indistinguishable, the potential energy surface 
$$
V(x) = \sum_{i<j}V_{pair}(\|x_i-x_j\|),\quad 1\le i,j\le 7,
$$
has four local minima denoted by $C_0$ (hexagon), $C_1$ (capped parallelogram 1), $C_2$ (capped parallelogram 2), 
and $C_3$ (trapezoid) -- see Figure~\ref{fig:LJ7_energy_clusters}.

\subsubsection{Choosing collective variables}
Following~\cite{tribello2010self, tsai2019reaction}, we chose the 2nd and 3rd central moments of the
distribution of coordination numbers as collective variables (CVs). These CVs allow us to separate all four minima in a 2D space.
The coordination number of particle $i$, $1\le i\le 7$, is a smooth function approximating the number of nearest neighbors of $i$:
\begin{equation}
\label{eqn:cnum}
c_i(x) = \sum_{j \neq i} \frac{1 - \big(\frac{r_{ij}}{1.5 \sigma} \big)^{8}}{
1 - \big(\frac{r_{ij}}{1.5 \sigma} \big)^{16}},\quad{\rm where}\quad r_{ij}:=\|x_i-x_j\|.
\end{equation}
Let us elaborate on it. The interparticle distance minimizing $V_{pair}(r)$ is $r^{\ast}=2^{1/6}\sigma$. 
We would like to treat particles as nearest neighbors if the distance between them is  close to $r^{\ast}$. If four particles arranged into a square, 
the diagonal particles at distance $r^{\ast}\sqrt{2}\approx 1.5874\sigma$ should be ``not quite" nearest neighbors. Particles at distance $2r^{\ast}$ should not count as nearest neighbors. 
Normalizing the distance to $1.5\sigma$ 
in  \eqref{eqn:cnum} makes the desired distinction. Indeed, we have:
$$
 \frac{1 - \big(\frac{r^{\ast}}{1.5 \sigma} \big)^{8}}{
1 - \big(\frac{r^{\ast}}{1.5 \sigma} \big)^{16}} \approx 0.91\sim 1,\quad
 \frac{1 - \big(\frac{r^{\ast}\sqrt{2}}{1.5 \sigma} \big)^{8}}{
1 - \big(\frac{r^{\ast}\sqrt{2}}{1.5 \sigma} \big)^{16}} \approx 0.39,\quad
 \frac{1 - \big(\frac{2r^{\ast}}{1.5 \sigma} \big)^{8}}{
1 - \big(\frac{2r^{\ast}}{1.5 \sigma} \big)^{16}} \approx 0.04\sim 0.
$$
The $kth$ central moment of $c_i(x)$ is
\begin{equation}
\mu_k(x):=\frac{1}{7}\sum_i (c_i(x) - \bar{c}(x))^{k},\quad{\rm where}\quad \bar{c}(x) = \frac{1}{7}\sum_j (c_j(x)).
\end{equation}
The moments $\mu_k$ are invariant with respect to permutation of particles, which is important as the particles are indistinguishable.
 The space of the chosen collective variables $(\mu_2,\mu_3)$ is $\R^2$.

\subsubsection{Obtaining data}

\begin{figure}[htbp]
\begin{center}
\includegraphics[width=0.6\textwidth]{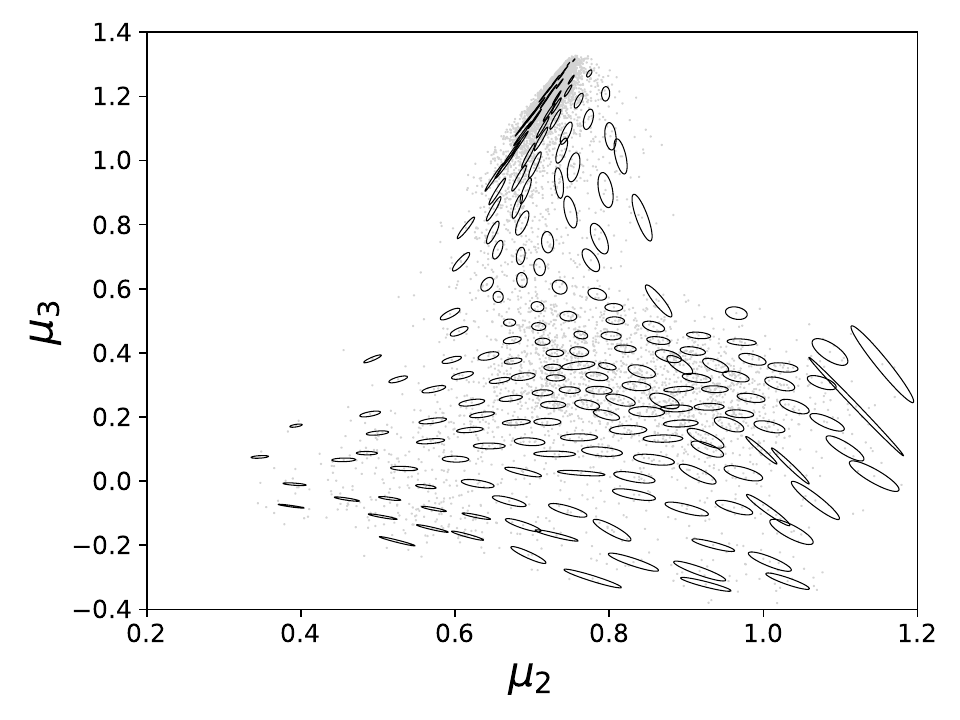}
\caption{Ellipses corresponding to the principal components of the estimated diffusion matrices $M(x_j)$ for a subsampled set of the LJ7 dataset depicted with faint grey dots. 
}
\label{fig:LJ7_diffusion_tensors}
\end{center}
\end{figure}
We set the temperature for the simulation to $0.2 \frac{k_bT}{a}$ and used a
Langevin thermostat with relaxation time $0.1\sqrt{\frac{a}{m\sigma^2}}.$ To
prevent clusters from evaporating, we imposed restraints to
keep the atoms from moving further than $2\sigma$ from the center of mass from
the cluster. Then we simulated the trajectory at timestep
$0.005\sqrt{\frac{a}{m\sigma^2}}$ for $10^{7}$ steps using the velocity Verlet
algorithm as implemented in the PLUMED software~\cite{plumed2}. For use with
diffusion maps, we subsampled the trajectory at regular intervals of time
to obtain $7500$ data points. 
\begin{figure}[htbp]
\begin{center}
(a)\includegraphics[width=0.45\textwidth]{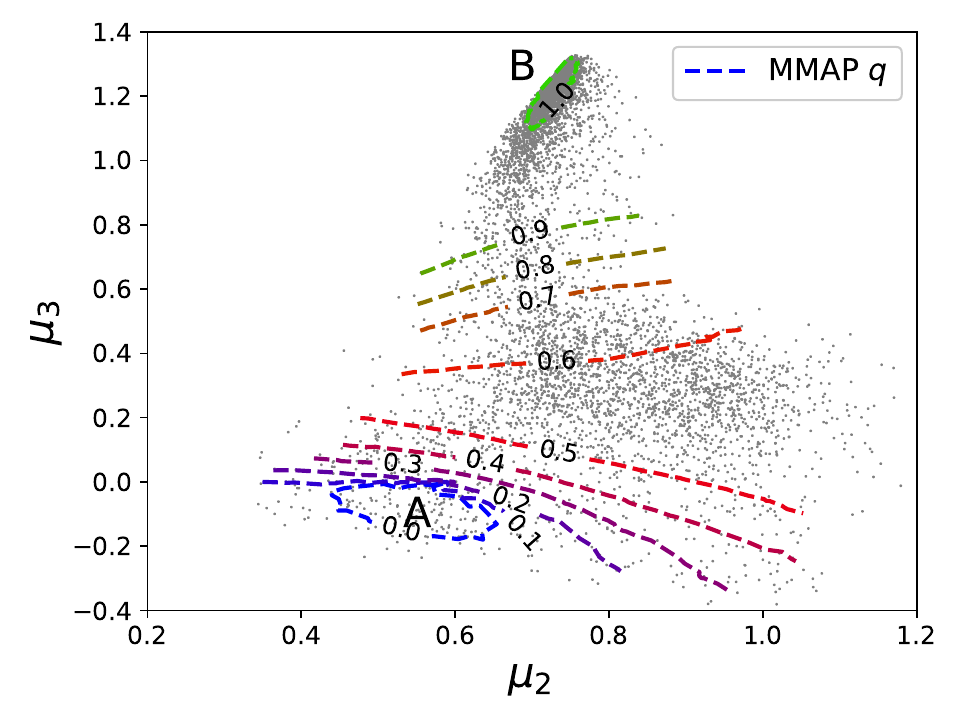}
(b)\includegraphics[width=0.45\textwidth]{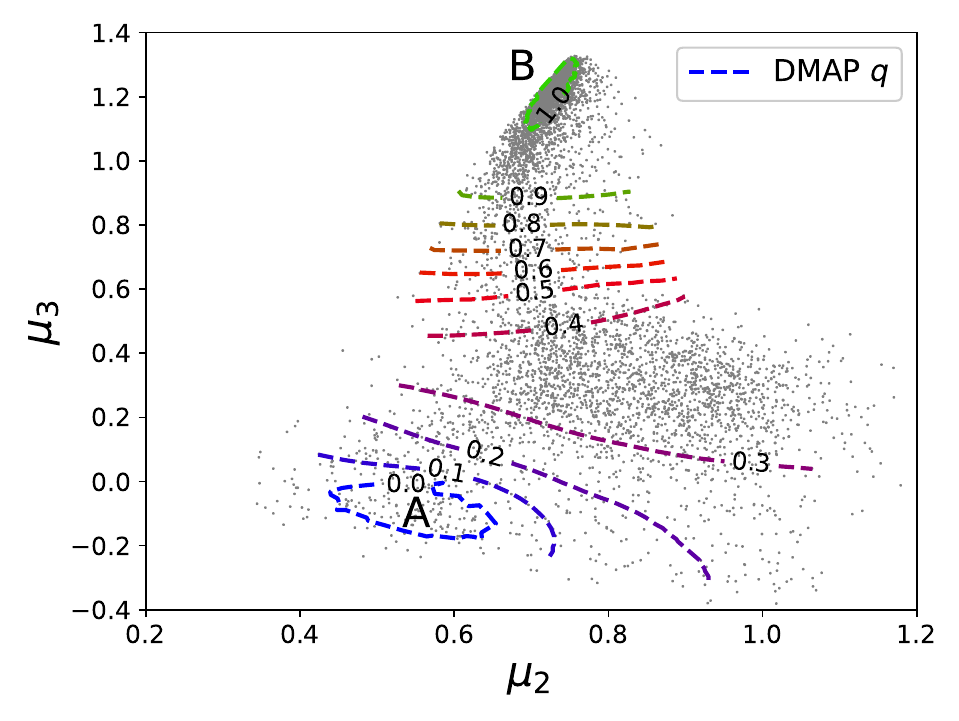}
\caption{ 
Level sets for the {\tt mmap} (a) and {\tt dmap} (b) committors are depicted with dashed lines. The set of data points is shown with grey dots.
}
\label{fig:LJ7_committor}
\end{center}
\end{figure}
The diffusion matrices obtained as described in~\cite{2006string} {and~\ref{sec:diffusion_estimation}}, and are visualized in
Figure~\ref{fig:LJ7_diffusion_tensors}. The reactant set $A$ and product
set $B$ are chosen to be the energy minima $C_3$ and $C_0$ respectively. This
choice was motivated by the fact that $C_3$ and $C_0$ are the most distant pair of metastable states, separated by the highest potential energy barrier~\cite{dellago1998efficient,wales2002discrete},
and 
connected with a wide transition channel passing through the basins of  $C_1$
and $C_2$. It is worth noting that such a situation where the region between two
stable states of interest is interspersed with other metastable states is quite
common in practical situations~\cite{martini2017variational}. 
Furthermore, transitions between $C_0$ and $C_3$ occur very infrequently, and 
it takes much longer time to accumulate statistics for them in numerical simulation than for the transition in alanine dipeptide considered in Section~\ref{sec:aladip}.

The scaling parameters $\epsilon$ for {\tt mmap} and {\tt dmap} were set, respectively, to
\begin{align}
\epsilon&= \max_{i} \min_{j\neq i} s(i,j),\quad{\rm where}\label{eqn:heuristic}\\
s(i,j)&:= \frac{1}{2}(x_i -x_j)^{\top}(M^{-1}(x_i) + M^{-1}(x_j))(x_i - x_j)\quad\text{for {\tt mmap}},\\
s(i,j)&:= ||x_i - x_j||_2^2\quad\text{for {\tt dmap}}.
\end{align}
This simple procedure for choosing $\epsilon$ worked remarkably well.

\subsubsection{Results and validation}
\label{sec:LJ_results}
The level sets of the {\tt mmap} and {\tt dmap} committors are shown in Figure \ref{fig:LJ7_committor}.
Notably, these  committors significantly differ from each other. In particular, the $q=0.5$
level sets for the {\tt mmap} and {\tt dmap} committors
lie on opposite sides of the dynamical trap surrounding the basins of $C_1$ and $C_2$ minima.
{ The reactive currents  computed using the {\tt mmap} and {\tt dmap} committors respectively, are displayed in Figure~\ref{fig:LJ7_current} (a) and (b). 
}
\begin{figure}[htbp]
\begin{center}
(a)\includegraphics[width=0.45\textwidth]{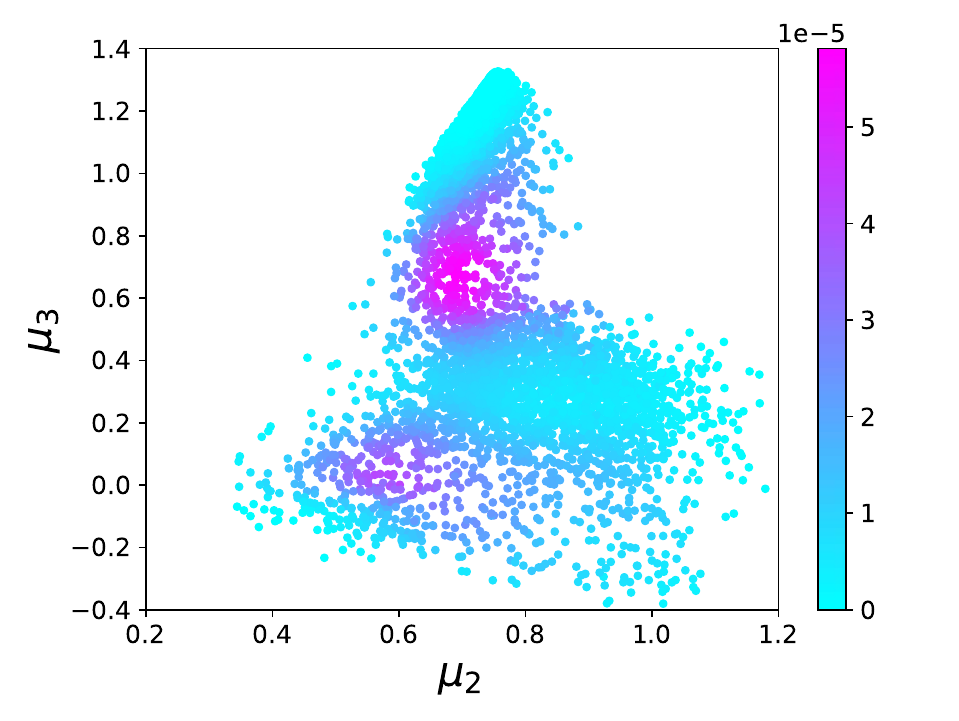}
(b)\includegraphics[width=0.45\textwidth]{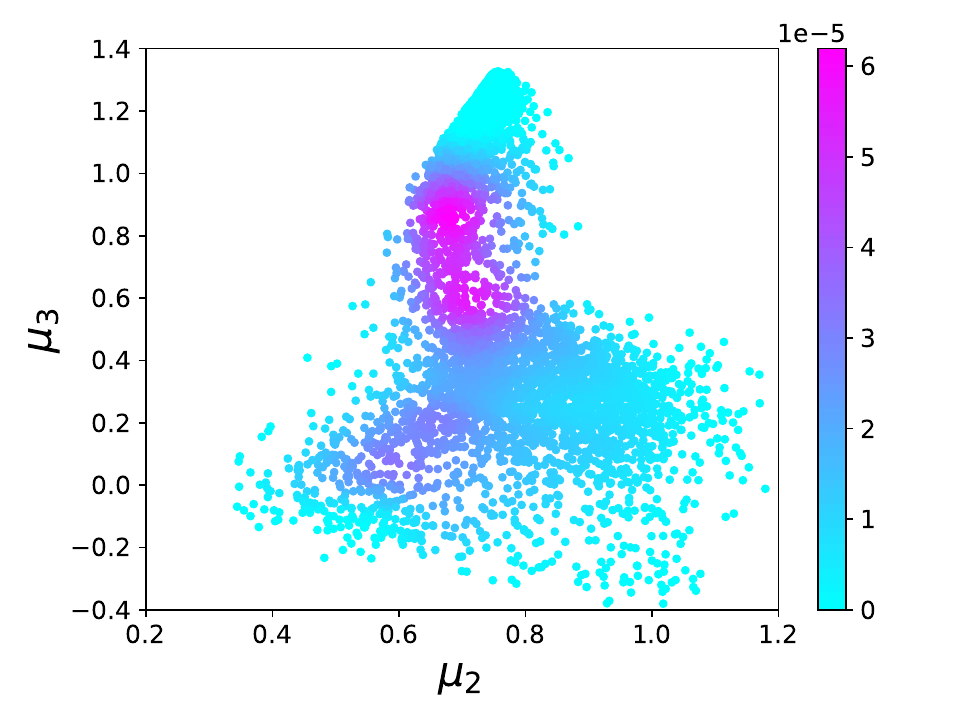}
\caption{ 
The intensity of the reactive current obtained with the {\tt mmap} (a) and {\tt dmap} (b) committors. 
}
\label{fig:LJ7_current}
\end{center}
\end{figure}

To validate our results and determine which of the {\tt mmap} or {\tt dmap} committors is more accurate, we performed committor
analysis \cite{2006string,peters2016reaction,geissler1999kinetic}, a common statistical
validation technique for committors in collective variables. 
Committor analysis checks a particular committor
level set by using the definition of the committor at $x$ as the probability that a stochastic trajectory starting at $x$ first reaches $B$ rather than $A$. 
We verified the most important level set $q=0.5$, the \emph{transition state}.
We sampled a set of  $N_{pt}=1000$ points $x_j$ along this level set and launched an ensemble of $N_e = 200$
trajectories from each of them. 
For each $x_j$, we counted the number of trajectories $N_B$ that reached first C$_0$ rather than C$_3$ and denoted the ratio $\sfrac{N_B}{N_e}$
by $p_B(x_j)$. 
We then plotted a histogram with each bin defined by a $p_B$ value and counts
determined by the number of selected points in the level set $q=0.5$ with that $p_B$ value normalized by $N_{pt}$ {(Figure~\ref{fig:LJ7_committor_analysis})}.
A well-approximated $q=0.5$ level set should have a unimodal histogram
with a sharp peak at $p_B=0.5$. We see that the distribution for {\tt mmap} peaks at $0.5$ as expected, while the {\tt dmap} distribution peaks at $0.75,$ missing the correct statistical behavior by a large margin.
Therefore, we conclude that {\tt mmap} produces a good approximation for the committor while {\tt dmap} gives a qualitatively wrong result.

\begin{figure}[htbp]
\begin{center}
\includegraphics[width=0.6\textwidth]{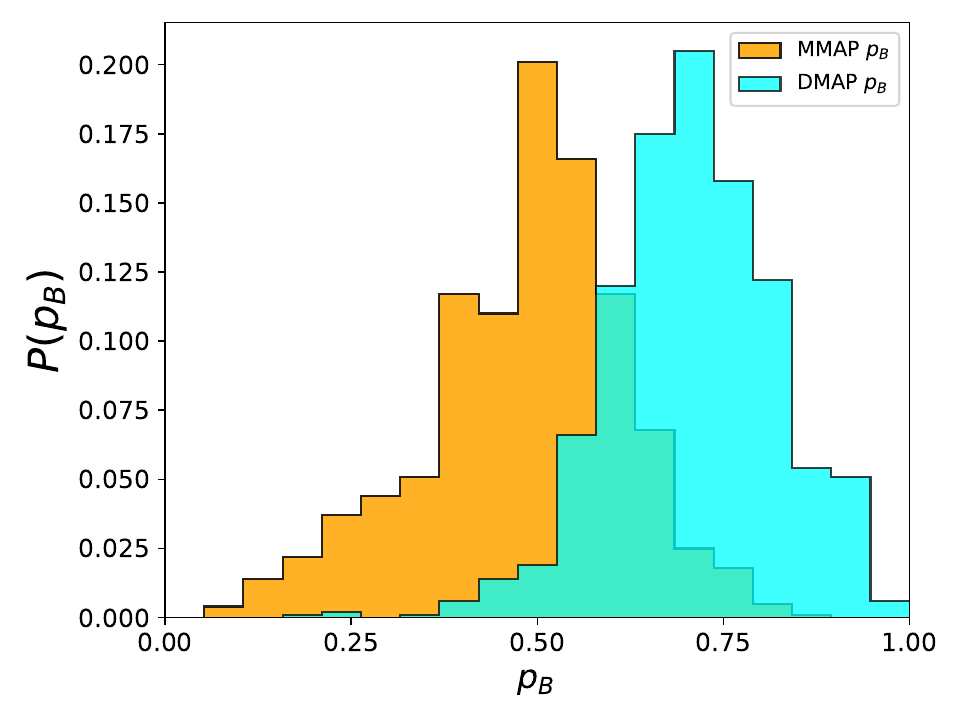}
\caption{Committor analysis of the $q=0.5$ level sets from Figure~\ref{fig:LJ7_committor}, with orange corresponding to {\tt mmap} and cyan corresponding to {\tt dmap}.
    }
\label{fig:LJ7_committor_analysis}
\end{center}
\end{figure}

%
%

\section{Conclusion}
\label{sec:conclusion}
The main conclusion of this work is that the Mahalanobis diffusion map algorithm ({\tt mmap}) is a provably correct, robust and reliable tool for computing the committor 
in collective variables discretized to a point cloud of data generated by MD simulations. The dynamics in collective variables is governed by a reversible
SDE with anisotropic and position-dependent diffusion matrix $M(x)$. 
The Mahalanobis kernel proposed in \cite{singer2008} accurately captures this anisotropy regardless of whether 
$M(x)$ is decomposable or not into a product $J(x)J^{\top}(x)$  where $J(x)$ is the Jacobian matrix for some diffeomorphism. 

Specifically, we have calculated the limiting family of differential operators converged to by the $\alpha$-indexed {\tt mmap} family of matrix operators, where convergence is with respect to the number of data points tending to infinity and scaling parameter $\epsilon$ tending to 0.
If $\alpha=\sfrac{1}{2}$, the limiting operator is the generator for the overdamped Langevin 
SDE in collective variables. In our derivation, we have discarded the key assumption of \cite{singer2008} that $M(x)$ is associated with a diffeomorphism.

We have chosen two benchmark chemical physics systems as test problems: transitions in alanine dipeptide and rearrangement of an LJ7 cluster of 2D particles.
On these examples, we have demonstrated that {\tt mmap} is easy to implement and gives good results for any reasonable choice of the scaling parameter epsilon. 
We have validated our results by comparing the committors  computed with {\tt mmap} to the one obtained using a traditional finite difference method or by conducting committor analysis.
We have contrasted the committor by {\tt mmap} with the one by the diffusion map with isotropic Gaussian kernel and shown that the latter can  
lead to a wrong placement of the transition state and highly inaccurate estimate for the reaction rate.

In the current setting, {\tt mmap} has a significant limitation: it requires the input data to be sampled from the invariant probability density. This prevents us from
using enhanced sampling techniques such as temperature acceleration~\cite{maragliano2006temperature} and metadynamics~\cite{valsson2016} which are standard 
techniques applied to promote transitions between metastable states in MD simulations. 
We plan to address this problem in our future work.

\section{Acknowledgements}
We thank Prof. Y. Kevrekidis for encouraging us to look into Mahalanobis diffusion maps and for providing valuable feedback on this manuscript. { We thank Prof. T. Berry for a valuable discussion on whether every symmetric positive definite matrix function is associated with a variable change. We are grateful to the anonymous reviewers for their thorough reviews and helpful suggestions. }
This work was  partially supported by NSF CAREER grant DMS-1554907 (MC), AFOSR MURI grant FA9550-20-1-0397 (MC),
and  NSF CAREER grant CHE-2044165 (PT). 
Computing resources Deepthought2, MARCC and XSEDE (project TG-CHE180053) were used to generate molecular simulation data.

 \appendix
\setcounter{equation}{0}
\renewcommand{\theequation}{\Alph{section}-\arabic{equation}}
    \setcounter{lemma}{0}
    \renewcommand{\thelemma}{\Alph{section}\arabic{lemma}}

{

\section{The free energy and the diffusion tensor}
\label{sec:diffusion_estimation}
The free energy $F(x)$ and the diffusion matrix $M(x)$ in SDE \eqref{eqn:cvsde} are defined, respectively, as follows:
\begin{align}
\label{eqn:free_energy_formula}
    F(x) = - \beta^{-1} \ln \left(\int_{\R^{m}} Z_y^{-1} e^{-\beta V(y)} \prod\limits_{l = 1}^{d}\delta(\theta_{l}(y) - x_l) dy\right),
\end{align}
\begin{align}
    \label{eqn:diffusion_tensor_formula}
M(x) = e^{\beta F(x)} \int_{\R^{m}} J(y) J^{\top}(y) Z_{y}^{-1} e^{-\beta V(y)}\prod\limits_{l = 1}^{d}\delta(\theta_{l}(y) - x_l) dy.
 \end{align}

In \eqref{eqn:diffusion_tensor_formula}, $J(x)$ is the Jacobian matrix whose entries are 
$$
J_{ij}(y) = \frac{\partial\theta_i(y)}{\partial y_j} \qquad 1 \le i \le d, \quad 1 \le j \le m.
$$
To apply the {\tt mmap} algorithm, we do not need to know the free energy. However, we do need evaluate the diffusion matrix $M(x)$ at the data points. 
A method for estimating the diffusion matrix $M(x)$ of~\eqref{eqn:diffusion_tensor_formula} was described in~\cite{2006string}. Here we outline it for the reader's convenience.

First, we approximate the distribution $\prod_{l=1}^d \delta(\theta_l(y) - x_l)$ with a Gaussian. 
We fix $x \in \R^{d}$, choose a large spring constant $\kappa>0$, and consider a constrained system with the ``extended potential'' given by
\begin{align}
U(y;\kappa,x) = V(y) + \frac{\kappa}{2}||\theta(y) - x||^2,
\end{align}
evolving according to the overdamped Langevin dynamics
\begin{align}\label{eqn:constr_langevin}
dy_t &= -\nabla U(y_t;\kappa,x) dt + \sqrt{2 \beta^{-1}} dW_t \nonumber \\
&=\left[-\nabla V(y) - \kappa J(y)^{\top}(\theta(y) - x)\right]dt + \sqrt{2\beta^{-1}}dW_t.
\end{align}
The restrained dynamics~\eqref{eqn:constr_langevin} has stationary distribution 
$$
\rho(y;\kappa,x):= Z(\kappa,x)^{-1}e^{-\beta U(y;\kappa,x)}\quad{\rm where}\quad Z(\kappa,x):= \int_{\R^d} e^{-\beta U(y;\kappa,x)} dy,
$$
with limiting distribution 
$$\lim\limits_{\kappa \to \infty} \int_{\R^d} f(y) \rho(y;\kappa, x) dy = \int_{\R^d} f(y) Z_y^{-1} e^{-\beta U(y)}\prod\limits_{l=1}^d \delta(\theta_l(y) - x_l) dy $$
Next, we generate trajectory data $\{y_{t_i}\}_{i=1}^n$ for the restrained dynamics \eqref{eqn:constr_langevin}. These data enable us to estimate the conditional expectation for an arbitrary function $f$ as follows:
\begin{align}
\lim\limits_{\kappa \to \infty}\lim\limits_{n \to \infty} \frac{1}{n} \sum\limits_{i=1}^n f(y_{t_i}) 
&= \lim\limits_{\kappa \to \infty} \lim\limits_{n \to \infty} \frac{1}{n\Delta t} \int_{0}^{n\Delta t} f(y_t) \rho(y_t;\kappa,x) dt \nonumber \\
&= e^{\beta F(x)} \int_{\R^{n}} f(y) Z_y^{-1} e^{-\beta V(y)} \prod\limits_{l=1}^d \delta(\theta_l(y) - x_l) dy  \nonumber \\
&=  \E[f \vert \theta(y) = x].
\end{align}
In particular, the diffusion tensor $M(x)$ for the collective variables 
$x = \theta(y)$ is estimated according the formula:
\begin{equation}
    \label{eqn:Mcompute}
    M_{ij}(x)\approx \frac{1}{n}\sum_{k=1}^n\sum_{l=1}^m \frac{\partial\theta_i(y_{t_k})}{\partial y_l}\frac{\partial\theta_j(y_{t_k})}{\partial y_l}.
\end{equation}
The mean force, i.e., the gradient of the free energy $\nabla F(x)$, also can be estimated in a similar manner:
\begin{equation}
\label{eqn:meanforcecompute}
\nabla F(x) = \frac{\kappa}{n}\sum_{i=1}^n\left(x - \theta(y_{t_i})\right).
\end{equation}
We evaluate $M(x)$ by \eqref{eqn:Mcompute} in both applications presented in this work. This procedure is an outgrowth of well-established uses for constrained dynamics within the molecular dynamics community, particularly in fundamental works for computing free energy differences~\cite{carter1989constrained,kirkwood1935statistical} and
position-dependent friction~\cite{straub1987calculation}.

For a general diffusion matrix not necessarily of form~\eqref{eqn:diffusion_tensor_formula} or for when the Jacobian $J(y)$ is not available, one can utilize local covariances as described in~\cite{singer2008, banisch2020,talmon2013empirical,singer2009detecting, peter2020}. These approaches utilize that for~\eqref{eqn:cvsde},
\[M(x) = \lim\limits_{\Delta t \to 0}\frac{\mathbb{E}\big[(x_{t + \Delta t} - x_t)(x_{t+ \Delta t} - x_t)^{\top} 
\vert x_t = x\big]}{2 \beta^{-1}\Delta t}, \]
and derive estimators of $M(x)$ by approximating the right-hand side
through short simulation bursts initiated at $x$ or from small neighborhoods of the trajectory near $x.$  


}

\section{Proof of Theorem \ref{thm:lem6}}
\label{appendix:not_every_diffusion}

We will need two auxiliary lemmas.
The first lemma gives a necessary condition for a matrix-function to be Jacobian. 
\begin{lemma}
\label{thm:lem4}
Let $z = f(x)$ be a twice continuously differentiable coordinate change $f:\R^d\rightarrow\R^d$ with Jacobian matrix
\begin{equation}
\label{eqn:Jac1}
J(x) = \left[\begin{array}{ccc}\frac{\partial f_1}{\partial x_1}&\cdots&\frac{\partial f_1}{\partial x_d}\\
\vdots&&\vdots\\\frac{\partial f_d}{\partial x_1}&\cdots&\frac{\partial f_d}{\partial x_d}\end{array}\right].
\end{equation}
Then, the entries of $J$ satisfy
\begin{equation}
\label{eqn:Jac2}
 \frac{\partial J_{ij}}{\partial x_k} = \frac{\partial J_{ik}}{\partial x_j}\quad\forall 1\le i,j,k\le d,~ j\neq k.
\end{equation}
\end{lemma}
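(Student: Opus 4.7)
The plan is to unfold the definition of the Jacobian entries and reduce the claim to the classical equality of mixed partial derivatives. By definition of $J$ in \eqref{eqn:Jac1}, we have $J_{ij}(x) = \partial f_i / \partial x_j$ and $J_{ik}(x) = \partial f_i / \partial x_k$. Taking one further partial derivative of each with respect to the remaining variable yields
\[
\frac{\partial J_{ij}}{\partial x_k} = \frac{\partial^2 f_i}{\partial x_k\, \partial x_j}, \qquad \frac{\partial J_{ik}}{\partial x_j} = \frac{\partial^2 f_i}{\partial x_j\, \partial x_k}.
\]

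The key step is then to invoke Clairaut's (Schwarz's) theorem on the equality of mixed partial derivatives, which requires exactly the hypothesis of the lemma, namely that $f$ is twice continuously differentiable on $\mathbb{R}^d$. Under that hypothesis the two mixed second partial derivatives above coincide for each fixed index $i$ and each ordered pair $(j,k)$ with $j \neq k$, which gives \eqref{eqn:Jac2}. The restriction $j \neq k$ in the statement is not actually needed for the identity to hold, but it is the only case in which the identity conveys nontrivial information (when $j = k$ the equality is tautological).

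There is essentially no obstacle: once the definitions are substituted, the lemma reduces to a one-line application of a standard multivariable calculus theorem. The only thing to take care of is to state explicitly that the $C^2$ regularity of $f$ is what licenses the exchange of the order of differentiation, as this is the precise place where the smoothness assumption on $f$ enters, and it is this necessary condition on Jacobians that will be leveraged in the subsequent construction of the counterexample showing that not every symmetric positive definite $M(x)$ admits a decomposition $M = J J^{\top}$ with $J$ a true Jacobian.
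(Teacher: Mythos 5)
Your proof is correct and follows essentially the same route as the paper's: substitute the definition $J_{ij} = \partial f_i/\partial x_j$ and invoke the equality of mixed partials (Clairaut/Schwarz) under the $C^2$ hypothesis. The additional remarks about where the regularity is used and why $j \neq k$ is harmless are accurate but not needed.
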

\begin{proof} 
Indeed, the left and right-hand side of \eqref{eqn:Jac2} are the mixed partials that are equal as they are continuous:
$$
 \frac{\partial J_{ij}}{\partial x_k}  = \frac{\partial^2 f_i}{\partial x_k\partial x_j}  = 
  \frac{\partial^2 f_i}{\partial x_j\partial x_k} = \frac{\partial J_{ik}}{\partial x_j}.
  $$
\end{proof}

The second lemma shows that any decomposition $M=AA^\top$ of a symmetric positive definite matrix
relates to $M^{1/2}$ via an orthogonal transformation.
\begin{lemma}
\label{thm:lem5}
Let $M$ be a symmetric positive definite matrix, and $A$ be any matrix such that
$M = AA^\top$.
Then, there exists an orthogonal transformation $O$ such that $A = M^{1/2}O$.
\end{lemma}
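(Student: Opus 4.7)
The plan is to reduce the identity $A=M^{1/2}O$ to verifying that the candidate matrix $O:=M^{-1/2}A$ is orthogonal, which is immediate from $M=AA^{\top}$. Because $M$ is symmetric positive definite, the spectral theorem produces a unique symmetric positive definite square root $M^{1/2}$ (diagonalize $M=Q\Lambda Q^{\top}$ with $\Lambda$ diagonal and strictly positive, and set $M^{1/2}:=Q\Lambda^{1/2}Q^{\top}$). This $M^{1/2}$ is invertible, with inverse $M^{-1/2}=Q\Lambda^{-1/2}Q^{\top}$, so the candidate $O$ is well-defined.

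Next, I would compute directly
\begin{equation*}
OO^{\top}=M^{-1/2}A\,A^{\top}M^{-1/2}=M^{-1/2}MM^{-1/2}=M^{-1/2}M^{1/2}M^{1/2}M^{-1/2}=I,
\end{equation*}
using that $M^{-1/2}$ is symmetric (so $(M^{-1/2})^{\top}=M^{-1/2}$) and commutes with $M^{1/2}$. This shows $O$ is orthogonal, and multiplying the defining relation $O=M^{-1/2}A$ by $M^{1/2}$ on the left yields the desired $A=M^{1/2}O$.

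There is essentially no obstacle beyond fixing the convention that $M^{1/2}$ denotes the symmetric positive definite root; the argument is purely algebraic once that choice is made. The only thing worth flagging explicitly in the write-up is that the lemma is stated for a square matrix $A$ (which is the case of interest here, since later $A$ plays the role of the Jacobian of a putative diffeomorphism $f:\mathbb{R}^{d}\to\mathbb{R}^{d}$), so $M^{-1/2}A$ is indeed a $d\times d$ matrix for which orthogonality $OO^{\top}=I$ is equivalent to $O^{\top}O=I$.
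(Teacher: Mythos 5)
Your proposal is correct and follows essentially the same route as the paper: both define $O:=M^{-1/2}A$, verify $OO^{\top}=M^{-1/2}AA^{\top}M^{-1/2}=I$ using the symmetry of $M^{-1/2}$, and conclude $A=M^{1/2}O$. Your added remarks on the spectral-theorem construction of $M^{1/2}$ and the squareness of $A$ are sensible clarifications but do not change the argument.
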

\begin{proof}
We have: 
$$
M = M^{1/2}M^{1/2} = AA^\top.
$$
Multiplying this identity by $M^{-1/2}$ on the right and on the left we get:
$$
I = M^{-1/2}AA^\top M^{-1/2} = M^{-1/2}A\left(M^{-1/2}A\right)^\top.
$$
Hence $O:=M^{-1/2}A$ is orthogonal.
Therefore, $A = M^{1/2}O$ as desired.
\end{proof}

{
\begin{proof} (Proof of Theorem \ref{thm:lem6}.)
First we prove that \emph{\eqref{eqn:harmonic} is necessary for the existence of decomposition \eqref{eqn:Mdec}}.
We observe that $M^{1/2}(x,y) = m(x,y)I_{2\times 2}$ is a Jacobian of a vector-function $f:\Omega\rightarrow\mathbb{R}^2$ if and  only if $m(x,y)$ is constant. Indeed, condition \eqref{eqn:Jac2} applied to $M^{1/2}(x,y) = m(x,y)I_{2\times 2}$ reduces to $m_y = 0$ and $m_x = 0$. Note that any constant function is harmonic.

Suppose that $m(x,y)$ is not constant. In this case, by Lemma \ref{thm:lem5}, if $M$ admits decomposition \eqref{eqn:Mdec} then $J(x,y)$ must be of the form $M^{1/2}(x,y)O(x,y)$ for some orthogonal matrix $O(x,y)$. There are two families of orthogonal $2\times 2$ matrix functions:
\begin{equation}
\label{eqn:O}
O(x,y) =  \left[\begin{array}{cc}\cos\phi(x,y) &\sin\phi(x,y)\\-\sin\phi(x,y)&\cos\phi(x,y)\end{array}\right]~~{\rm and}~~
O(x,y) =  \left[\begin{array}{cc}\cos\phi(x,y) &\sin\phi(x,y)\\\sin\phi(x,y)&-\cos\phi(x,y)\end{array}\right].
\end{equation}
Hence, $M^{1/2}(x,y)O(x,y)$ is of the form
$$
m(x,y)\left[\begin{array}{cc}\cos\phi(x,y) &\sin\phi(x,y)\\-\sin\phi(x,y)&\cos\phi(x,y)\end{array}\right]~~{\rm or}~~
m(x,y)\left[\begin{array}{cc}\cos\phi(x,y) &\sin\phi(x,y)\\\sin\phi(x,y)&-\cos\phi(x,y)\end{array}\right].
$$
Condition \eqref{eqn:Jac2} applied to $M^{1/2}(x,y)O(x,y)$ requires the following equalities to hold:
\begin{align*}\frac{\partial}{\partial y}\left(m\cos\phi\right) &= \frac{\partial}{\partial x}\left(m\sin\phi\right),\\ 
\frac{\partial}{\partial y}\left(m\sin\phi\right) &= -\frac{\partial}{\partial x}\left(m\cos\phi\right).
\end{align*}
Performing differentiation, we get:  
\begin{align*}
m_y\cos\phi -m\phi_y\sin\phi &=m_x\sin\phi + m\phi_x\cos\phi\\
m_y\sin\phi + m\phi_y\cos\phi &=-\left(m_x\cos\phi - m\phi_x\sin\phi\right)
\end{align*}
Regrouping the terms, we obtain: 
\begin{align}
m_y\cos\phi - m_x\sin\phi &= m\left[\phi_x\cos\phi + \phi_y\sin\phi\right] \label{eqn:aux1} \\
-m_y\sin\phi - m_x\cos\phi &= m\left(\phi_y\cos\phi - \phi_x\sin\phi\right).\label{eqn:aux2}
\end{align}
The last set of identities can be rewritten in a matrix form:
\begin{equation}
\label{eqn:matrix_form1}
\left[\begin{array}{cc}\cos\phi &\sin\phi\\
-\sin\phi&\cos\phi\end{array}\right]
\left[\begin{array}{r}m_y\\-m_x\end{array}\right] =
m\left[\begin{array}{cc}\cos\phi &\sin\phi\\
-\sin\phi&\cos\phi\end{array}\right]
\left[\begin{array}{r} \phi_x\\ \phi_y\end{array}\right] 
\end{equation}
The matrix in \eqref{eqn:matrix_form1} is orthogonal. Hence, multiplying \eqref{eqn:matrix_form1} by its transpose we get:
\begin{equation}
\label{eqn:harmonic2}
\frac{m_y}{m} \equiv [\log m]_y = \phi_x,\quad -\frac{m_x}{m} \equiv  -[\log m]_x= \phi_y.
\end{equation}
The fact that the mixed partials of $\phi$ must be equal, implies that 
\begin{equation}
\label{eqn:lap0}
[\log m]_{xx} + [\log m]_{yy} = 0.
\end{equation}
This completes the proof that \eqref{eqn:harmonic} is necessary for the existence of decomposition \eqref{eqn:Mdec}.

Next, we prove that \emph{\eqref{eqn:harmonic} is sufficient for the existence of decomposition \eqref{eqn:Mdec} if $\Omega$ is simply connected.} This immediately follows from the theorem of calculus saying that if the components of a two-dimensional continuously differentiable vector field $[p,q]^\top$ satisfy $p_y=q_x$ in a simply connected domain $\Omega$ then this vector field is conservative. Indeed,  we define the vector field $[p,q]^\top$  by $p = [\log m]_y$ and $q=-[\log m]_x$. This vector field satisfies the condition $p_y=q_x$ as $\log m$ is harmonic. We choose a point $(x_0,y_0)\in\Omega$, fix it, and for any other point $(x,y)\in\Omega$ choose a path $\gamma\subset\Omega$ from $(x_0,y_0)$ to $(x,y)$ and integrate the vector field $(p,q)$ along it (see the integral in the right-hand side of equation \eqref{eqn:phidef} below). We claim that the value of this integral is independent of the path $\gamma$ from $(x_0,y_0)$ to $(x,y)$. Indeed, if $\gamma'$ is some other path connecting these points, then we define a closed contour by reversing the path $\gamma'$ and apply Green's theorem 
\begin{equation}
\label{eqn:green}
\oint_{C} pdx + qdy = \iint_{U}(p_y-q_x)dxdy,
\end{equation}
where $U$ is the region bounded by the simple closed contour $C$. Since $p_y-q_x$ is identically zero, the contour integral also must be zero. If the contour formed by the paths $\gamma$ and the reverse of $\gamma'$ is self-intersecting, we apply Green's theorem to all simple closed contours formed by these paths. Therefore, we can define a function $\phi(x,y)$ by
\begin{equation}
\label{eqn:phidef}
\phi(x,y) = \int_{\gamma}[\log m]_ydx - [\log m]_xdy
\end{equation}
where $\gamma\subset\Omega$ is any path from $(x_0,y_0)$ to $(x,y)$ set
$J(x,y) = m(x,y)O(x,y)$ where $O(x,y)$ is any orthogonal matrix function of the form \eqref{eqn:O} with $\phi$ defined by \eqref{eqn:phidef}.

Finally, we show that \emph{if $\Omega$ is not simply connected, the condition \eqref{eqn:harmonic} is not sufficient for the existence of decomposition \eqref{eqn:Mdec}}. We adapt the famous counterexample of a non-conservative vector field. Consider the vector field 
\begin{equation}
\left[\begin{array}{r}p\\q\end{array}\right]=\frac{a}{x^2+y^2}\left[\begin{array}{r}-y\\x\end{array}\right],
\end{equation}
where $a\neq 0$ is a constant,
with the property that $p_y = q_x$. It is smooth in $\mathbb{R}^2\backslash\{(0,0)\}$. Let 
\begin{equation}
\label{eqn:ex1}
m(x,y) = \left(x^2+y^2\right)^{-a/2}.
\end{equation}
It is easy to check that $[\log m]_y = p$ and $-[\log m]_x = q$ and hence $\log m$ is harmonic everywhere except for the origin. Let us choose $\phi(x,y)$ satisfying $\phi_x = p$ and $\phi_y = q$ to be\footnotemark[1]
\footnotetext[1]{Acknowledgement of lecture notes by E.~L. Lady: \\\href{http://www.math.hawaii.edu/~lee/calculus/potential.pdf}{http://www.math.hawaii.edu/$\sim$lee/calculus/potential.pdf}}
\begin{equation}
\label{eqn:ex2}
\phi(x,y) = \begin{cases}a\arctan\left(\sfrac{y}{x}\right),& x > 0\\
\frac{\pi a}{2},& x = 0\\
\pi a+a\arctan\left(\sfrac{y}{x}\right),& x < 0
\end{cases}
\end{equation}
The function $\phi$ is smooth everywhere except for the origin and the negative $y$-axis. It has a jump discontinuity of size $2\pi a$ along the negative $y$-axis. If $a\notin\mathbb{Z}$, the functions $\sin\phi$ and $\cos\phi$ will be discontinuous along the negative $y$-axis. Hence decomposition \eqref{eqn:Mdec} does not exist.  
\end{proof}
\begin{remark}
However, if $a\in\mathbb{Z}$ in \eqref{eqn:ex1}, the orthogonal matrices \eqref{eqn:O} with $\phi$ given by \eqref{eqn:ex2} are smooth everywhere except for the origin. In particular, if $a=1$, we have:
\begin{equation}
\label{eqn:ex3}
\frac{1}{\sqrt{x^2+y^2}}\left[\begin{array}{rr}x&y\\-y&x\end{array}\right]\quad{\rm and}\quad
\frac{1}{\sqrt{x^2+y^2}}\left[\begin{array}{rr}x&y\\y&-x\end{array}\right]
\end{equation}
\end{remark}

}

%
%

\section{Proof of Theorem \ref{thm:main-theorem}}
\label{appendix:main_proof}
We fix $r\ge0$, $x\in\R^d$, and a symmetric positive definite matrix $A\in\R^{d\times d}$.
Then $\mathcal{B}_{r}(x;A)$ denotes the ellipsoid
$$
\mathcal{B}_{r}(x;A):=\{y\in\R^d~|~(y-x)^\top A(y-x)\le r^2\}.
$$
The proof of Theorem \ref{thm:main-theorem} includes two technical lemmas.

\begin{lemma}
\label{thm:lem0}
Let $x\in\R^d$ be fixed, $\phi(x+a)$ be a function that grows not faster than a polynomial as $\|a\|\rightarrow\infty$, and 
$A$ be a positive definite matrix. 
Then, for all small enough $\epsilon>0$  and any $\mu\in(0,\sfrac{1}{2})$, we have:
\begin{equation}
I:=
\left|\int_{\R^d\backslash \mathcal{B}_{\epsilon^{\mu}}(x;A)}
e^{-\frac{1}{2\epsilon} (y-x)^\top A(y-x)}\phi(y) dy \right|
 \le \epsilon^{d/2}p(\epsilon^{2\mu-1})e^{-\frac{\epsilon^{2\mu-1}}{2}},\label{eq:lem0}
\end{equation}
where $p$ is a polynomial.
\end{lemma}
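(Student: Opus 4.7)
My plan is to reduce the integral to a one-dimensional tail estimate for a standard Gaussian times a polynomial, using a linear change of variables that diagonalizes $A$ and rescales by $\sqrt{\epsilon}$.

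First I would substitute $z = y - x$ and then $u = \epsilon^{-1/2} A^{1/2} z$. This turns the quadratic form in the exponent into $\|u\|^2$, gives Jacobian factor $\epsilon^{d/2}|A|^{-1/2}$, and maps the complement of the ellipsoid $\mathcal{B}_{\epsilon^\mu}(x;A)$ to the complement of the Euclidean ball of radius $R_\epsilon := \epsilon^{\mu-1/2}$. Since $\mu < 1/2$, $R_\epsilon \to \infty$ as $\epsilon \to 0$. Thus
\begin{equation*}
I \le \epsilon^{d/2}|A|^{-1/2}\int_{\|u\|\ge R_\epsilon} e^{-\|u\|^2/2}\,\bigl|\phi\bigl(x+\sqrt{\epsilon}\,A^{-1/2}u\bigr)\bigr|\,du.
\end{equation*}

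Next I would use the polynomial growth of $\phi$ to dominate the integrand. By hypothesis $|\phi(w)| \le A' + B'\|w\|^k$ for some constants, so, for $\epsilon \le 1$,
\begin{equation*}
\bigl|\phi\bigl(x+\sqrt{\epsilon}\,A^{-1/2}u\bigr)\bigr| \le A' + B'\bigl(\|x\| + \|A^{-1/2}\|\,\|u\|\bigr)^k \le Q(\|u\|),
\end{equation*}
where $Q$ is a fixed polynomial of degree $k$ depending only on $x$, $A$, and the constants in the growth estimate. Passing to spherical coordinates reduces the remaining bound to
\begin{equation*}
\int_{\|u\|\ge R_\epsilon} e^{-\|u\|^2/2}Q(\|u\|)\,du \;=\; \sigma_{d-1}\int_{R_\epsilon}^{\infty} e^{-r^2/2}\,r^{d-1}Q(r)\,dr,
\end{equation*}
with $\sigma_{d-1}$ the surface area of the unit sphere.

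The final step is the classical Gaussian tail estimate: for any polynomial $\widetilde{Q}$ there is a polynomial $\widetilde{P}$ with
\begin{equation*}
\int_R^\infty e^{-r^2/2}\widetilde{Q}(r)\,dr \;\le\; \widetilde{P}(R)\,e^{-R^2/2}\qquad\text{for all }R\ge 1,
\end{equation*}
obtained by repeated integration by parts using $re^{-r^2/2}\,dr = -d\,e^{-r^2/2}$. Applying this to $\widetilde{Q}(r) = r^{d-1}Q(r)$ at $R = R_\epsilon$, and noting that $R_\epsilon^2 = \epsilon^{2\mu-1}$, yields
\begin{equation*}
I \;\le\; C\,\epsilon^{d/2}\,\widetilde{P}(\epsilon^{\mu-1/2})\,e^{-\epsilon^{2\mu-1}/2}.
\end{equation*}
To match the form in the statement I would absorb the half-integer powers: since $\epsilon^{\mu-1/2} = (\epsilon^{2\mu-1})^{1/2}$, for $\epsilon$ small enough that $\epsilon^{2\mu-1}\ge 1$ we have $\widetilde{P}(\epsilon^{\mu-1/2}) \le p(\epsilon^{2\mu-1})$ for a suitably chosen polynomial $p$, giving the claimed bound.

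The main obstacle is not analytic depth but bookkeeping: making sure the polynomial growth constants for $\phi$ (which depend on $x$ but not on $\epsilon$), the matrix factors from the change of variables, and the polynomial produced by integration by parts are all absorbed into a single polynomial $p$ in $\epsilon^{2\mu-1}$. Everything else is a direct application of the Gaussian tail.
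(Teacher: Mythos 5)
Your proposal is correct and follows essentially the same route as the paper's proof: the rescaling $u=\epsilon^{-1/2}A^{1/2}(y-x)$ producing the Jacobian factor $\epsilon^{d/2}|A|^{-1/2}$, the polynomial-growth bound on $\phi$, the passage to spherical coordinates, and the Gaussian tail estimate obtained by repeated integration by parts. The only cosmetic difference is that the paper first substitutes $t=r^2/2$ (choosing an odd power $m$ so the resulting exponent is an integer) before integrating by parts, whereas you integrate by parts directly in $r$ and then absorb the half-integer powers of $\epsilon$ at the end; both handle the same bookkeeping issue.
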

\begin{proof}
We implement two variable changes. First we introduce $z:=\epsilon^{-1/2}A^{1/2}(y-x)$ and then switch to
spherical coordinates in $\R^d$. We calculate:
\begin{align*}
I=&
\left|\int_{\R^d\backslash \mathcal{B}_{\epsilon^{\mu}}(x;A)}
e^{-\frac{1}{2\epsilon} (y-x)^\top A(y-x)}\phi(y) dy \right|\\
=&
\left|\int_{\R^d\backslash \mathcal{B}_{\epsilon^{2\mu-1}}(0;I)} e^{-\frac{1}{2} \|z\|^2}
\phi(x+\sqrt{\epsilon}A^{-1/2}z)\epsilon^{d/2}|A|^{-1/2}dz\right|
\end{align*}
Note that $2\mu - 1 < 0$, hence $\epsilon^{2\mu-1}\rightarrow\infty$ as $\epsilon\rightarrow 0$.
Further, since $\phi(x+a)$ grows not faster than a polynomial as $\|a\|\rightarrow\infty$, there are constants $C_1$ and $C_2$  and a positive integer $k$ such that
$$
\left|\left(\phi(x)+\sqrt{\epsilon} A^{-1/2}z\right)\right| \le C_1 + \sqrt{\epsilon}C_2\lambda_{\max}(A^{-1/2})\|z\|^k.
$$
Therefore, aiming at switching to spherical coordinates in $\R^d$, we write:
$$
\left|\|z\|^{d-1}\left(\phi(x)+\sqrt{\epsilon} A^{-1/2}z\right)\right| \le C_1\|z\|^{d-1} + 
\sqrt{\epsilon}C_2\lambda_{\max}(A^{-1/2})\|z\|^{k+d-1}\le
C\|z\|^{m},
$$
where $C$ is some constant, and $m$ is the smallest odd integer greater or equal to $d+k-1$.
Finally, switching to spherical coordinates and denoting the surface of  the $(d-1)$-dimensional unit sphere by $|S_{d-1}|$, 
we derive the desired estimate:
\begin{align*}
I & \le \frac{C\epsilon^{d/2}|S_{d-1}|}{|A|^{1/2}} 
\int_{\epsilon^{\mu-\sfrac{1}{2}} }^{\infty} e^{-\frac{r^2}{2}}r^mdr\\
=& \frac{C\epsilon^{d/2}|S_{d-1}|}{|A|^{1/2}} 
\int_{\frac{\epsilon^{2\mu-1}}{2} }^{\infty} e^{-t}t^{\frac{m-1}{2}}dt\\
=&  \epsilon^{d/2}p(\epsilon^{2\mu-1})e^{-\frac{\epsilon^{2\mu-1}}{2}},
\end{align*}
where the polynomial $p$ is obtained from  integrating by parts $\sfrac{(m-1)}{2}$ times and multiplying the result by $C|A|^{-1/2}$.
\end{proof}

\begin{lemma}
\label{thm:lem2}
Let $\mathcal{G}_{\epsilon}$ be an integral operator defined by
\begin{equation}
\label{eqn:Ge}
\mathcal{G}_{\epsilon}f(x) = \int_{\R^d} e^{-\frac{1}{4\epsilon}(x-y)^\top [M^{-1}(x) +M^{-1}(y)](x-y)}f(y) dy,
\end{equation}
where the matrix function $M$ and the scalar function $f$ satisfy Assumption \ref{Ass2}.
Let $\tilde{M}\equiv M^{-1}$ and 
\begin{equation}
\label{eqn:MTaylor}
\tilde{M}(y) = \tilde{M}(x) + \nabla \tilde{M}(x)(y-x) + r_2(x;y-x) + r_3(x;y-x) + O(\|y-x\|^4),
\end{equation}
where $\nabla \tilde{M}(x)(y-x)$ is a matrix with entries 
$$
\left(\nabla \tilde{M}(x)(y-x)\right)_{ij} = \nabla \tilde{M}_{ij}^\top(y-x),
$$
and $r_2(x;z)$ and $r_3(x;z)$ are the matrices whose entries are the second and third-order terms in Taylor expansions of $\tilde{M}_{ij}$.
Then
\begin{align}
\mathcal{G}_{\epsilon}f(x) &=\frac{(2\pi\epsilon)^{d/2}}{|\tilde{M}|^{1/2}}\left(f(x) +\epsilon\left[- \nabla f(x)^\top \omega_1(x) -f(x)\omega_2(x) \right.\right.\notag\\
&\left.\left.+\frac{1}{2}{\sf tr}(\tilde{M}(x)^{-1}H(x))\right] + O(\epsilon^2)\right), \label{eqn:Ge1}
\end{align}
where $H(x):=\nabla\nabla f(x)$ is the Hessian matrix for $f$ evaluated at $x$, 
and 
\begin{align}
\omega_{1,i}(x):= &
 \frac{|\tilde{M}|^{1/2}}{(2\pi\epsilon)^{d/2}} \frac{1}{4\epsilon^2}\int_{\R^d}e^{-\frac{z^\top \tilde{M} z}{2\epsilon} }z_i \left[z^\top [\nabla \tilde{M}(x){z}]z\right]dz,
 \quad 1\le i\le d,
  \label{eqn:o10}\\
\omega_2(x):=&
 \frac{|\tilde{M}|^{1/2}} {(2\pi\epsilon)^{d/2}} 
 \int_{\R^d} e^{-\frac{z^\top \tilde{M} z}{2\epsilon} } \left[ \frac{z^\top r_2(x;z)z}{4\epsilon^2} -
  \frac{\left({z}^\top [\nabla \tilde{M}(x){z} ]z\right)^2}{32\epsilon^3}\right]dz. \label{eqn:o20}
\end{align}
\end{lemma}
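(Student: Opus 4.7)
The plan is to perform a Laplace-type asymptotic expansion of $\mathcal{G}_{\epsilon}f(x)$ in powers of $\epsilon$. I would first change variables to $z = y - x$, so that the kernel becomes $\exp\bigl(-\tfrac{1}{4\epsilon} z^\top[\tilde{M}(x)+\tilde{M}(x+z)]z\bigr)$, and apply the Taylor expansion \eqref{eqn:MTaylor} of $\tilde{M}$ to split the exponent as
\begin{equation*}
-\tfrac{1}{2\epsilon}z^\top\tilde{M}(x)z \;-\; \tfrac{1}{4\epsilon}\bigl[A_1(z)+A_2(z)+A_3(z)+O(\|z\|^6)\bigr],
\end{equation*}
where $A_1(z):=z^\top[\nabla\tilde{M}(x) z] z$, $A_2(z):=z^\top r_2(x;z) z$, and $A_3(z):=z^\top r_3(x;z) z$. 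The first piece is a Gaussian in $z$ with covariance $\epsilon\tilde{M}(x)^{-1}$, and the remainder is treated as a small perturbation.

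Next, I would apply Lemma \ref{thm:lem0} with $A = \tilde{M}(x)$ and the polynomially bounded integrand $\phi = f$ (legitimate by Assumption \ref{Ass2}) to reduce integration to the ellipsoid $\mathcal{B}_{\epsilon^\mu}(x;\tilde{M}(x))$ for some fixed $\mu\in(0,\tfrac{1}{2})$, at the cost of an error that is exponentially small in $\epsilon^{2\mu-1}$ and hence negligible to any polynomial order in $\epsilon$. On this ellipsoid $\|z\|\lesssim\epsilon^{\mu}<\sqrt{\epsilon}$, so I can simultaneously Taylor expand the perturbative exponential and the test function,
\begin{align*}
e^{-(A_1+A_2+A_3)/(4\epsilon)} &= 1 - \tfrac{A_1+A_2}{4\epsilon} + \tfrac{A_1^2}{32\epsilon^2} + R(z;\epsilon),\\
f(x+z) &= f(x) + \nabla f(x)^\top z + \tfrac{1}{2}z^\top H(x) z + O(\|z\|^3),
\end{align*}
where $R(z;\epsilon)$ absorbs every term whose Gaussian integral is $O(\epsilon^2)$ after normalization.

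I would then multiply the two expansions, integrate term by term against $e^{-z^\top\tilde{M}(x)z/(2\epsilon)}$, and use the vanishing of odd Gaussian moments to isolate the nontrivial contributions at orders $\epsilon^0$ and $\epsilon^1$. At order $\epsilon^0$ only the constant-in-$z$ term survives, yielding the prefactor $f(x)\,(2\pi\epsilon)^{d/2}/|\tilde{M}|^{1/2}$. At order $\epsilon^1$ three terms contribute: the product $\tfrac{1}{2}z^\top H(x) z$ with the unit factor gives $\tfrac{\epsilon}{2}\mathrm{tr}(\tilde{M}^{-1}H)$ through the second Gaussian moment; the cross term $\nabla f(x)^\top z\cdot\bigl(-A_1(z)/(4\epsilon)\bigr)$ matches \eqref{eqn:o10} verbatim and produces $-\epsilon\,\nabla f(x)^\top\omega_1(x)$; and the pair $f(x)\cdot\bigl(-A_2(z)/(4\epsilon)\bigr)$ together with $f(x)\cdot A_1(z)^2/(32\epsilon^2)$ combine via \eqref{eqn:o20} into $-\epsilon f(x)\omega_2(x)$. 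Dividing out the common Gaussian prefactor produces exactly \eqref{eqn:Ge1}.

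The principal obstacle will be the bookkeeping required to confirm that every other term generated by the two expansions is genuinely absorbed into the $O(\epsilon^2)$ remainder. Some contributions are naively of order $\epsilon^{1/2}$ or $\epsilon^{3/2}$ (for example $A_3/\epsilon$, or the cross product of $A_1$ with the cubic Taylor piece of $f$) and must be shown to vanish at those orders by parity in $z$; others such as $A_1 A_2/\epsilon^2$ and $A_1^3/\epsilon^3$ must be controlled directly using the scaling $z\sim\sqrt{\epsilon}$ on the truncated domain. The polynomial-growth hypotheses and the uniform lower bound on $|\tilde{M}|^{-1}$ in Assumption \ref{Ass2} supply the uniform moment bounds needed to make each of these estimates rigorous.
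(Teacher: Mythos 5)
Your proposal is correct and follows essentially the same route as the paper's proof: factor the exponent into the frozen Gaussian $e^{-z^\top\tilde{M}(x)z/(2\epsilon)}$ times a perturbation via the Taylor expansion \eqref{eqn:MTaylor}, truncate to the ellipsoid $\mathcal{B}_{\epsilon^\mu}(x;\tilde{M}(x))$ using Lemma \ref{thm:lem0}, expand both the perturbative exponential and $f(x+z)$, and collect the order-$\epsilon$ contributions (the Hessian trace, the $\omega_1$ cross term, and the $\omega_2$ combination of $-A_2/(4\epsilon)$ with $A_1^2/(32\epsilon^2)$) by parity and Gaussian moment counting. The identification of which products survive and which are absorbed into $O(\epsilon^2)$ matches the paper's term-by-term treatment of its Integrals 1--3.
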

\begin{proof}
The proof utilizes the 
Taylor expansion $f(x+z)$ around $f(x)$:
\begin{equation}
\label{eqn:fTaylor}
f(x + z) = f(x) + \nabla f(x)^\top z + \frac{1}{2}z^\top\nabla\nabla f(x) z + p_3(z) + p_4(z),
\end{equation}
where $p_3(z)$ is a homogeneous third degree polynomial in $z$, and $p_4(z)$ is $O(z^4)$.
To establish \eqref{eqn:Ge1}, we will need to integrate the products of each of these terms with the Mahalanobis kernel 
\begin{equation}
\label{eqn:m1kernel}
k_{\epsilon}(x,y) = e^{-\frac{1}{4\epsilon}(x-y)^\top [\tilde{M}(x) +\tilde{M}(y)](x-y)}.
\end{equation}
First we eliminate the dependence of the matrix $\tilde{M}$ on the integration variable $y$ in the exponent by using Taylor expansions. 
Let  $z:=y-x$ and $\varrho$ be the residual in the expansion \eqref{eqn:MTaylor}:
$$
\varrho(x,x+z) : = \tilde{M}(x+z)-\left\{\tilde{M}(x) + \nabla \tilde{M}(x)z+ r_2(x;z) + r_3(x;z)\right\} = O(\|z\|^4).
$$
Then
\begin{equation}
\label{eqn:eaux}
e^{-\frac{(y-x)^\top [\tilde{M} (x) + \tilde{M}(y)](y-x)}{4\epsilon}}=
e^{-\frac{{z}^\top \tilde{M} (x){z}}{2\epsilon}} e^{-\frac{{z}^\top [\nabla \tilde{M}(x){z} + r_2(x;z) + r_3(x;z)+ \varrho(x,y)] {z}}{4\epsilon}}.
\end{equation}
The Taylor series for $\exp(-t)$ converges on $\R$.  Hence, 
expanding the second exponent in \eqref{eqn:eaux} we get:
\begin{align*}
&e^{-\frac{ 
{z}^\top [\nabla \tilde{M}(x){z} + r_2(z) + r_3(z)+\varrho(x,y)] z
}{4\epsilon}}\notag \\
=&
1 - \frac{1}{4\epsilon}
\left({z}^\top [\nabla \tilde{M}(x){z} + r_2(x;z) + r_3(x;z)+ \varrho(x,y)] {z}\right)\\
+&\frac{1}{32\epsilon^2}
\left({z}^\top [\nabla \tilde{M}(x){z} + r_2(x;z) + r_3(x;z)+ \varrho(x,y)] {z}\right)^2 -
\ldots\\
+&\frac{(-1)^k}{k!(4\epsilon)^k}
\left({z}^\top [\nabla \tilde{M}(x){z} + r_2(x;z) + r_3(x;z)+ \varrho(x,y)] {z}\right)^k +\ldots.
\end{align*}

Second, we will split the integral of each term in \eqref{eqn:fTaylor} multiplied by $k_{\epsilon}(x,y)$ into the sum
$$
\int_{\R^d} = \int_{\mathcal{B}_{x,\epsilon^{\mu}} } + \int_{\R^d\backslash\mathcal{B}_{x,\epsilon^{\mu}}},
$$
where $\mathcal{B}_{x,\epsilon^\mu}$
denotes the ellipse $\mathcal{B}_{\epsilon^{\mu}}(x;\tilde{M})$ and $\mu\in(0,\sfrac{1}{2})$ is fixed.
Where appropriate, we will apply Lemma \ref{thm:lem0} to the integral over $\R^d\backslash\mathcal{B}_{x,\epsilon^{\mu}}$. 
We will need the following ingredients.
 
{\bf Integral 1}:
\begin{align}
&\int_{\mathcal{B}_{x,\epsilon^{\mu}}}k_{\epsilon}(x,y)dy\label{eqn:int1}\\
=&
\int_{\mathcal{B}_{0,\epsilon^{\mu}}}e^{-\frac{z^\top \tilde{M}(x) z}{2\epsilon}}\left[1- \frac{1}{4\epsilon} \left({z}^\top [\nabla \tilde{M}(x){z} + r_2(x;z) + r_3(x;z)+ O(\|z\|^4)] {z}\right)\right.\notag\\
&+\left.\frac{1}{32\epsilon^2}
\left({z}^\top [\nabla \tilde{M}(x){z} + r_2(x;z) + r_3(x;z)+ \rho(x,y)] {z}\right)^2-
\ldots\right.\notag\\
+&\left.\frac{(-1)^k}{k!(4\epsilon)^k}
\left({z}^\top [\nabla \tilde{M}(x){z} + r_2(x;z) + r_3(x;z)+ \rho(x,y)] {z}\right)^k +\ldots
\right]dz.\notag
\end{align}
We will tackle this integral term-by-term. For brevity, we will omit the argument $(x)$ of $\tilde{M}$. Thus,
$$
\int_{\mathcal{B}_{0,\epsilon^{\mu}}}e^{-\frac{z^\top \tilde{M} z}{2\epsilon}}dz = \frac{(2\pi\epsilon)^{d/2}}{|\tilde{M}|^{1/2}}(1 + \delta_1(\epsilon)),
$$
 where $\delta_1(\epsilon)$ decays exponentially fast as $\epsilon\rightarrow 0$. Further,
\begin{align*}
\int_{\mathcal{B}_{0,\epsilon^{\mu}}}e^{-\frac{z^\top \tilde{M} z}{2\epsilon}} \frac{ \left({z}^\top [\nabla \tilde{M}{z} ]z\right)^{2k+1}}{(4\epsilon)^{2k+1}}dz &= 0,\quad k = 0,1,2,\ldots;\\
\int_{\mathcal{B}_{0,\epsilon^{\mu}}}e^{-\frac{z^\top \tilde{M} z}{2\epsilon}} \frac{ \left({z}^\top [\nabla \tilde{M}{z} ]z\right)^{2k}}{(4\epsilon)^{2k}}dz 
&=  \frac{(2\pi\epsilon)^{d/2}}{|\tilde{M}|^{1/2}} O\left(\epsilon^{k}\right)  ,
\quad k = 1,2,\ldots;\\
\int_{\mathcal{B}_{0,\epsilon^{\mu}}}e^{-\frac{z^\top \tilde{M} z}{2\epsilon}} \frac{\left({z}^\top r_2(x;z) z\right)^{k}}{(4\epsilon)^{k}} dz &= 
 \frac{(2\pi\epsilon)^{d/2}}{|\tilde{M}|^{1/2}} O\left(\epsilon^{2k-1}\right)  ,\quad k = 1,2,\ldots;\\
\int_{\mathcal{B}_{0,\epsilon^{\mu}}}e^{-\frac{z^\top \tilde{M} z}{2\epsilon}}
 \frac{\left({z}^\top [\nabla \tilde{M}{z} ]z\right)^{2k}{z}^\top r_2 (x;z)z}{(4\epsilon)^{2k+1}}dz &= 
 \frac{(2\pi\epsilon)^{d/2}}{|\tilde{M}|^{1/2}} O\left(\epsilon^{k+1}\right)  ,\quad k = 1,2,\ldots.
 \end{align*}
 The rest of the integrals originating from \eqref{eqn:int1} will be either zero or $O(\epsilon^{d/2}\epsilon^k)$ for some $k>2$.
Putting the integrals together and organizing them according to the order in  powers of $\epsilon$, we get:
\begin{equation}
\int_{\mathcal{B}_{x,\epsilon^{\mu}}}k_{\epsilon}(x,y)dy
= \frac{(2\pi\epsilon)^{d/2}}{|\tilde{M}|^{1/2}}\left[1 -\epsilon\omega_2(x) + O(\epsilon^2)\right], \label{I1}
\end{equation} 
where 
\begin{equation}
\label{eqn:o2}
\omega_2(x):= \frac{|\tilde{M}|^{1/2}}{(2\pi\epsilon)^{d/2}}
 \int_{\R^d} e^{-\frac{z^\top \tilde{M} z}{2\epsilon} } \left[ \frac{z^\top r_2(x;z)z}{4\epsilon^2} -
  \frac{\left({z}^\top [\nabla \tilde{M}{z} ]z\right)^2}{32\epsilon^3}\right]dz.
\end{equation}
Note that the value $\omega_2$ is of the order of 1. We have applied Lemma \ref{thm:lem0} to replace the integral over 
$\mathcal{B}_{x,\epsilon^{\mu}}$ with the one over $\R^d$. 


{\bf Integral 2}:
\begin{align}
&\int_{\mathcal{B}_{0,\epsilon^{\mu}}}k_{\epsilon}(x,x+z)z_idz\notag\\
=&
\int_{\mathcal{B}_{0,\epsilon^{\mu}}}e^{-\frac{z^\top \tilde{M} z}{2\epsilon}}z_i\left[1- \frac{1}{4\epsilon} \left({z}^\top [\nabla \tilde{M}{z} + r_2(z) + r_3(z)+ O(\|z\|^4)] {z}\right)+\ldots\right]dz\notag\\
=& \frac{(2\pi\epsilon)^{d/2}}{|\tilde{M}|^{1/2}}\left[ -\epsilon\omega_{1,i}(x) + O(\epsilon^2)\right], \label{I2}
\end{align} 
where, with the aid of Lemma \ref{thm:lem0},
\begin{equation}
\label{eqn:o1}
\omega_{1,i}(x):=
 \frac{|\tilde{M}|^{1/2}}{(2\pi\epsilon)^{d/2}}  \frac{1}{4\epsilon^2}\int_{\R^d}e^{-\frac{z^\top \tilde{M} z}{2\epsilon} }z_i \left[z^\top [\nabla \tilde{M}{z}]z\right]dz,\quad 1\le i\le d.
\end{equation}


{\bf Integral 3}:
\begin{align}
&\int_{\mathcal{B}_{0,\epsilon^{\mu}}}k_{\epsilon}(x,x+z)z^\top Hzdz\notag\\
=&
\int_{\mathcal{B}_{0,\epsilon^{\mu}}}e^{-\frac{z^\top \tilde{M} z}{2\epsilon}}z^\top H z\left[1- \frac{1}{4\epsilon} \left({z}^\top [\nabla \tilde{M}(x){z} + r_2(z) + r_3(z)+ O(\|z\|^4)] {z}\right)\right]dz\notag\\
=& \frac{(2\pi\epsilon)^{d/2}}{|\tilde{M}|^{1/2}}\left[ \epsilon{\sf tr}(\tilde{M}^{-1}H) + O(\epsilon^2)\right]. \label{I3}
\end{align} 

Using Integrals 1, 2, and 3, we calculate:
$$
\mathcal{G}_{\epsilon}f(x)  = \int_{\R^d} e^{-\frac{z^\top [\tilde{M}(x) + \tilde{M}(x+z)] z}{4\epsilon}}f(x+z) dz 
= \int_{\mathcal{B}_{0,\epsilon^{\mu}}} [\ldots]dz + \int_{\R^d\backslash \mathcal{B}_{0,\epsilon^{\mu}}}[\ldots]dz.
$$
The second integral in the right-hand side decays exponentially as $\epsilon\rightarrow 0$ by Lemma \ref{thm:lem0}. 
Therefore, we will incorporate its value into $O(\epsilon^2)$ term below. We continue, omitting the argument $x$ for brevity and recalling that $\tilde{M}^{-1}\equiv M$:
\begin{align*}
\mathcal{G}_{\epsilon}f& = 
 \int_{\mathcal{B}_{0,\epsilon^{\mu}}}  e^{-\frac{z^\top [\tilde{M}(x) + \tilde{M}(x+z)] z}{2\epsilon}}\left[f + \nabla f^\top z + \frac{1}{2}z^\top \nabla\nabla f z +\ldots\right]dz + \int_{\R^d\backslash \mathcal{B}_{0,\epsilon^{\mu}}}[\ldots]dz
\\
&=
(2\pi\epsilon)^{d/2}|M|^{1/2}\left( f +\epsilon\left[- \nabla f^\top \omega_1 -f\omega_2 +\frac{1}{2}{\sf tr}(M\nabla\nabla f)\right] + O(\epsilon^2)\right).
\end{align*}
\end{proof}

Now we prove Theorem \ref{thm:main-theorem}.
\begin{proof}
To carry out the proof of Theorem \ref{thm:main-theorem},  we need to calculate the limit
\begin{equation}
\label{eqn:limstar}
\lim_{\epsilon\rightarrow0}\mathcal{L}_{\epsilon,\alpha}f(x)\equiv \lim_{\epsilon\rightarrow0}\frac{\mathcal{P}_{\epsilon,\alpha}f(x) - f(x)}{\epsilon}
\end{equation}
for any fixed $x\in\mathcal{M}$.
Central to the calculation of $\mathcal{P}_{\epsilon,\alpha}f(x)$ is the calculation of integrals over $\mathcal{M}$ which is done 
by splitting each integral into the sum 
$$
\int_{\mathcal{M}} = \int_{\mathcal{B}_{x,\epsilon^{\mu}}}+ \int_{\mathcal{M}\backslash \mathcal{B}_{x,\epsilon^{\mu}}},
$$
where $\mu\in(0,\sfrac{1}{2})$ meaning that $\epsilon^{\mu}\rightarrow 0$ as $\epsilon\rightarrow 0$.
{ Since the manifold $\mathcal{M}$ is either $\R^d$ or $\mathbb{T}^k\times \mathbb{R}^{d-k}$ for some $1\le 1\le d$ with the Euclidean metric within any open ball of radius $R_{Euc}$ \eqref{eqn:Reuc}, for the purpose of integration over it, $\mathcal{M}$ can be treated either as $\mathbb{R}^d$ or as a hyperstrip or a hyperbox in $\mathbb{R}^d$ (See Assumption \ref{Ass1}).
Therefore, if $\epsilon$ is small enough so that the whole ellipse $\mathcal{B}_{x,\epsilon^{\mu}}$ lies within a ball of radius $R_{Euc}$, for each integral with an integrand satisfying the assumptions of Lemma \ref{thm:lem0} we have:
$$
\int_{\mathcal{M}}  = \int_{\mathcal{B}_{x,\epsilon^{\mu}}}+ \int_{{\mathcal{M}}\backslash \mathcal{B}_{x,\epsilon^{\mu}}}\quad{\rm and}\quad 
\left| \int_{{\mathcal{M}}\backslash \mathcal{B}_{x,\epsilon^{\mu}}}\right| \le \left| \int_{\R^d \backslash \mathcal{B}_{x,\epsilon^{\mu}}}\right|.
$$
}
According to Lemma \ref{thm:lem0}, this last integral over $\R^d \backslash \mathcal{B}_{x,\epsilon^{\mu}}$ decays exponentially as $\epsilon\rightarrow 0$.
Therefore, the integrals  over $\mathcal{M}\backslash \mathcal{B}_{x,\epsilon^{\mu}}$ do not affect the limit \eqref{eqn:limstar}, i.e.,
\eqref{eqn:limstar} is completely determined by the integrals over the ellipse $\mathcal{B}_{x,\epsilon^{\mu}}$ 
which are the same whether $\mathcal{M}$ is $\R^d$ or $\mathbb{T}^k\times \mathbb{R}^{d-k}$  provided that it satisfies Assumption \ref{Ass1}.
{ Hence, Lemma \ref{thm:lem2} remains valid for the manifold $\mathcal{M}$.}

We will omit the argument $x$ in the calculations within this proof to shorten expressions.
Lemma \ref{thm:lem2} implies that 
\begin{align}
\rho_{\epsilon}(x) &= \int_{\mathcal{M}} k_{\epsilon}(x,y)\rho(y) dy \notag \\
&=(2\pi\epsilon)^{d/2}|M|^{1/2}
\left( \rho +\epsilon\left[- \nabla \rho^\top \omega_1 -\rho\omega_2 +\frac{1}{2}{\sf tr}(M\nabla\nabla \rho)\right] + O(\epsilon^2)\right). \label{eq:c13}
\end{align}
Therefore,
\begin{equation}
\label{eqn:c14}
\rho_{\epsilon}^{-\alpha} =\frac{(2\pi\epsilon)^{-\alpha d/2} |\tilde{M}|^{\alpha /2}}{\rho^{\alpha}}
\left[ 1 - \alpha \epsilon \frac{- \nabla \rho^\top \omega_1 -\rho\omega_2 +\frac{1}{2}{\sf tr}(M\nabla\nabla \rho)}{\rho} + O(\epsilon^2)\right].
\end{equation}
To perform right renormalization, we need to multiply the kernel $k_{\epsilon}(x,y)$ by $\rho_{\epsilon}^{-\alpha}(y)$.
The dependence of $k_{\epsilon}(x,y)\rho_{\epsilon}^{-\alpha}(y)$ on $y$ will be shifted to terms of Taylor expansions. As before, let $z = y-x$. 
First, we expand $|\tilde{M}(y)|^{\alpha/2}$
\begin{align}
&|\tilde{M}(x+z)|^{\alpha/2} =\left[|\tilde{M}(x)| +\nabla|\tilde{M}(x)|^\top z+ \frac{1}{2}z^\top \nabla\nabla|\tilde{M}(x)|z +O(\|z\|^3)\right]^{\alpha/2}\notag \\
=&  |\tilde{M}(x)|^{\alpha/2}\left[1 +\frac{\nabla|\tilde{M}(x)|^\top z}{|\tilde{M}(x)|}+\frac{z^\top \nabla\nabla|\tilde{M}(x)|z}{2|\tilde{M}(x)|} +O(\|z\|^3)\right]^{\alpha/2}\notag\\
=&|\tilde{M}(x)|^{\alpha/2}\left[1 +\frac{\alpha}{2}\frac{\nabla |\tilde{M}(x)|^\top z}{|\tilde{M}(x)|} + \frac{1}{2}z^\top A_1(x)z +  O(\|z\|^3)\right] \label{eqn:qmaTaylor}
\end{align}
where 
$$
A_1(x) = \frac{\alpha \nabla \nabla |\tilde{M}(x)|)}{2|\tilde{M}(x)| } +  \frac{\alpha(\alpha-2)\nabla |\tilde{M}(x)|\nabla |\tilde{M}(x)|^\top}{4|\tilde{M}(x)|^2}.
$$
Second, we denote the term multiplied by $\alpha\epsilon$ in \eqref{eqn:c14} by $R$:
$$
R: =  \frac{- \nabla \rho^\top \omega_1 -\rho\omega_2 +\frac{1}{2}{\sf tr}(M\nabla\nabla \rho)}{\rho}.
$$
Now, using \eqref{eqn:qmaTaylor},  we start the calculation of $\mathcal{K}_{\epsilon,\alpha}f(x)$:
\begin{align}
\mathcal{K}_{\epsilon,\alpha}f(x) &:= \int_{\mathcal{M}}\frac{ k_{\epsilon}(x,y)}{\rho^{\alpha}_{\epsilon}(y)}\rho(y) f(y)dy \notag\\
& = \int_{\mathcal{M}}k_{\epsilon}(x,y)\left[\rho^{-\alpha}_{\epsilon}(y)\rho(y)f(y)\right]dy \notag\\
& =\left[ \frac{|\tilde{M}(x)|}{(2\pi\epsilon)^d} \right]^{\alpha/2} \int_{\mathcal{M}}k_{\epsilon}(x,y)\left[\rho^{1-\alpha}(y)f(y)\right]\label{eqn:Kea1} \\
&\times
\left[ 1 - \alpha \epsilon R(y)+ O(\epsilon^2)\right]\notag\\
&\times
\left[1 +\frac{\alpha}{2}\frac{\nabla|\tilde{M}(x)|^\top (y-x)}{|\tilde{M}(x)|}+ \frac{1}{2}(y-x)^\top A_1(x)(y-x) +  O(\|y-x\|^3)\right] dy.\notag
\end{align}
To tackle the integral in \eqref{eqn:Kea1}, we split it to several integrals each of which we evaluate using Lemma \ref{thm:lem2}. 
For  brevity, we will omit arguments $x$ in the gradients and Hessians and in the matrix $\tilde{M}$. 
We continue:
\begin{align}
&
\int_{\mathcal{M}}k_{\epsilon}(x,y)\left[\rho^{1-\alpha}(y)f(y) \right]
\left[ 1 - \alpha \epsilon R(y)+ O(\epsilon^2)\right]\notag\\
&\times
\left[1 +\frac{\alpha}{2}\frac{\nabla|\tilde{M}|^\top z}{|\tilde{M}|}+ \frac{1}{2}z^\top A_1z+  O(\|z\|^3)\right] dz\notag\\
=&
\mathcal{G}_{\epsilon}[\rho^{1-\alpha}(y)f(y)](x) -\alpha\epsilon \mathcal{G}_{\epsilon}[ \rho^{1-\alpha}(y)f(y)R(y)](x)\notag\\
+&
\frac{\alpha}{2}\mathcal{G}_{\epsilon}\left[\rho^{1-\alpha}(y)f(y)\frac{\nabla|\tilde{M}|^\top (y-x)}{|\tilde{M}|}\right] (x)\notag\\
+& 
\frac{1}{2} \mathcal{G}_{\epsilon}\left[\rho^{1-\alpha}(y)f(y)(y-x)^\top A_1(y-x)\right](x) +  \frac{(2\pi\epsilon)^{d/2}}{|\tilde{M}|^{1/2}}O(\epsilon^2). \label{eqn:Gea2a}
\end{align}
Applying Lemma \ref{thm:lem2} we compute the four operators $\mathcal{G}_{\epsilon}$ in the last equation:
\begin{align}
\mathcal{G}_{\epsilon}[\rho^{1-\alpha}f] &=  \frac{(2\pi\epsilon)^{d/2}}{|\tilde{M}|^{1/2}}
\left[\rho^{1-\alpha}f\left[1-\epsilon\omega_2(x)\right]\right.\notag\\
&\left.+\epsilon \left\{ -\nabla(f\rho^{1-\alpha})^\top\omega_1 +
 \frac{1}{2} 
{\sf tr} (M\nabla\nabla \left[\rho^{1-\alpha}f\right]) \right\}  +O(\epsilon^2)\right];
\label{eqn:Gea1}\\
\alpha\epsilon\mathcal{G}_{\epsilon}[ \rho^{1-\alpha}fR] & = \frac{(2\pi\epsilon)^{d/2}}{|\tilde{M}|^{1/2}}\left[\alpha\epsilon \rho^{1-\alpha}fR + O(\epsilon^2)\right]; \label{eqn:Gea2}
\end{align}
\begin{align}
& \mathcal{G}_{\epsilon} \left[\rho^{1-\alpha}(y)f(y)\frac{\nabla|\tilde{M}|^\top (y-x)}{|\tilde{M}|}\right](x) =
\frac{(2\pi\epsilon)^{d/2}}{|\tilde{M}|^{1/2}}
\left\{-\epsilon\frac{\rho^{1-\alpha}f\nabla|\tilde{M}|^\top\omega_1}{|\tilde{M}|} \right.\notag\\
+&\left.
 \frac{\epsilon}{2}
 {\sf tr}\left(M\nabla_y\nabla_y\left[\rho^{1-\alpha}(y)f(y)\frac{\nabla|\tilde{M}|^\top (y-x)}{|\tilde{M}|}\right]_{y=x} + O(\epsilon^2)\right)\right\}.
 \label{eqn:Gea3}
\end{align}
Let us calculate the Hessian matrix in \eqref{eqn:Gea3}:
\begin{align}
 \nabla_y\nabla_y\left[\rho^{1-\alpha}(y)f(y)\frac{\nabla|\tilde{M}|^\top (y-x)}{|\tilde{M}|}\right]_{y=x} & = 
\nabla_y\left[\nabla_y\left[\rho^{1-\alpha}(y)f(y)\right] \frac{\nabla|\tilde{M}|^\top (y-x)}{|\tilde{M}|}\right.\notag\\
+  \left. 
\rho^{1-\alpha}(y)f(y)\nabla_y\frac{\nabla|\tilde{M}|^\top (y-x)}{|\tilde{M}|} \right]_{y=x} &= 2\nabla\left[\rho^{1-\alpha}f\right] \frac{\nabla|\tilde{M}|^\top}{|\tilde{M}|}.
\label{yyterm}
\end{align}
Finally, we compute the operator $\mathcal{G}_{\epsilon}$ in \eqref{eqn:Gea2a}:
\begin{equation}
\label{eqn:Gea4}
\mathcal{G}_{\epsilon}\left[\rho^{1-\alpha}(y)f(y)(y-x)^\top A_1(y-x)\right](x) = \frac{(2\pi\epsilon)^{d/2}}{|\tilde{M}|^{1/2}}
\left[\frac{\epsilon}{2} \rho^{1-\alpha } f{\sf tr}(M A_1) + O(\epsilon^2)\right].
\end{equation}
Putting together \eqref{eqn:Kea1}--\eqref{eqn:Gea4} we obtain:
\begin{align}
&\mathcal{K}_{\epsilon,\alpha}f(x) = 
\left[ \frac{|\tilde{M}(x)|}{(2\pi\epsilon)^d} \right]^{\frac{\alpha-1}{2}} 
\left[ \rho^{1-\alpha}f \left[1 +\epsilon h(x)\right] \right. \notag\\
+ &\epsilon \left\{-\nabla(f\rho^{1-\alpha})^\top\omega_1
+ \frac{1}{2} {\sf tr} (M\nabla\nabla \left[\rho^{1-\alpha}f\right]) +
\frac{\alpha}{2} {\sf tr} \left(M\nabla\left[\rho^{1-\alpha}f\right] \frac{\nabla|\tilde{M}|^\top}{|\tilde{M}|}\right) \right\}\notag\\
+&\left.O(\epsilon^2)\right],\label{eqn:Kea2}
\end{align}
where 
$$
h(x) = -\omega_2(x) -\alpha R(x)-\frac{\alpha}{2}\frac{\nabla|\tilde{M}|^\top\omega_1}{|\tilde{M}|} + \frac{1}{4}{\sf tr}(MA_1).
$$ 
 To facilitate the calculation, we denote the expression in the curly brackets in \eqref{eqn:Kea2} divided by $q^{1-\alpha}$ by
 $$
 B(\rho,f): =  \left[ \frac{- \nabla(f\rho^{1-\alpha})^\top\omega_1 +
\frac{1}{2} 
{\sf tr} (M\nabla\nabla \left[\rho^{1-\alpha}f\right]) +
\frac{\alpha}{2} {\sf tr} \left(M\nabla\left[\rho^{1-\alpha}f\right] \frac{\nabla|\tilde{M}|^\top}{|\tilde{M}|}\right)
}
{\rho^{1-\alpha}}\right] .
$$
Then $\mathcal{K}_{\epsilon,\alpha}f(x)$ can be written as:
\begin{equation}
\label{eqn:Kea3}
\mathcal{K}_{\epsilon,\alpha}f(x) = 
\left[ \frac{|\tilde{M}(x)|}{(2\pi\epsilon)^d} \right]^{\frac{\alpha-1}{2}}\rho^{1-\alpha}
\left[ f \left[1 +\epsilon h(x)\right] 
+\epsilon B(\rho,f) +O(\epsilon^2)\right].
\end{equation}

Observing that $\rho_{\epsilon,\alpha}(x) = K_{\epsilon,\alpha}1$, i.e., we need to use $f\equiv 1$ to get $\rho_{\epsilon,\alpha}(x)$, we calculate
the operator $\mathcal{P}_{\epsilon,\alpha}$:
\begin{align}
\mathcal{P}_{\epsilon,\alpha}f(x) & = \int_{\mathcal{M}}\frac{k_{\epsilon,\alpha}(x,y)}{\rho_{\epsilon,\alpha}(x)}  f(y) \rho(y) dy \notag \\
& =
 \frac{f\left\{1+\epsilon h(x)\right\} + \epsilon B(\rho,f) + O(\epsilon^2)}
{1+\epsilon h(x) + \epsilon B(\rho,1) + O(\epsilon^2)}. \label{eqn:c15anew}
 \end{align}
Expanding $\mathcal{P}_{\epsilon,\alpha}f(x) $ in powers of $\epsilon$ we obtain:
\begin{align}
\mathcal{P}_{\epsilon,\alpha}f(x) & =
\left[
f\left(1+\epsilon{h}(x)\right) + \epsilon B(\rho,f) + O(\epsilon^2)
\right]
\left[1-\epsilon{h}(x) - \epsilon B(\rho,1) + O(\epsilon^2)\right] 
\notag \\
 &=f +\epsilon\left[ B(\rho,f) - f B(\rho,1)\right]\notag\\
 &=
 f +\epsilon  \frac{- \nabla(f\rho^{1-\alpha})^\top\omega_1 +
\frac{1}{2} 
{\sf tr} (\tilde{M}^{-1}\nabla\nabla \left[\rho^{1-\alpha}f\right]) + \frac{\alpha}{2} {\sf tr} \left(\tilde{M}^{-1}\nabla\left[\rho^{1-\alpha}f\right] \frac{\nabla|\tilde{M}|^\top}{|\tilde{M}|}\right)
}
{\rho^{1-\alpha}} \notag\\
&- \epsilon f\frac{- \nabla(\rho^{1-\alpha})^\top\omega_1 +
\frac{1}{2} 
{\sf tr} (\tilde{M}^{-1}\nabla\nabla \left[\rho^{1-\alpha}\right]) + 
\frac{\alpha}{2} {\sf tr} \left(\tilde{M}^{-1}\nabla\left[\rho^{1-\alpha}\right] \frac{\nabla|\tilde{M}|^\top}{|\tilde{M}|}\right)
}
{\rho^{1-\alpha}}
\label{eqn:c16}
\end{align}
Finally, we compute the operator $\mathcal{L}_{\epsilon,\alpha}$, take the limit $\epsilon\rightarrow 0$, and obtain the desired result:
\begin{align} 
\lim_{\epsilon\rightarrow 0} \frac{\mathcal{P}_{\epsilon,\alpha}f(x) -f(x)}{\epsilon} &= 
\frac{1}{2}\left(
\frac{{\sf tr}
\left(M
\left[ \nabla\nabla \left[\rho^{1-\alpha}f\right] - f\nabla\nabla \rho^{1-\alpha}\right]   
\right)
}{\rho^{1-\alpha}} 
\right) 
\notag\\
&+
\frac{\alpha}{2}\left(
\frac{{\sf tr}
\left(M
\left[ 
\nabla\left[\rho^{1-\alpha}f\right]  - 
f\nabla\left[\rho^{1-\alpha}\right] 
\right]   \frac{\nabla|\tilde{M}|^\top}{|\tilde{M}|}
\right)
}{\rho^{1-\alpha}} 
 \right) \notag \\
 &-\left(\frac{ [\nabla(f\rho^{1-\alpha})-f\nabla(\rho^{1-\alpha})]^\top \omega_1]}{\rho^{1-\alpha}}
\right).
\label{eqn:Lea1}
\end{align}
\end{proof}

%
%
\section{Proof of Corollary  \ref{thm:main_corollary}}
\label{appendix:alpha0.5}
\begin{proof}
{\bf Term 1 in \eqref{eqn:Lea1}.}
First we compute
\begin{equation}
\label{eqn:Le05}
 \frac{1}{2}\left(\frac{
{\sf tr} (M\nabla\nabla \left[\rho^{1/2}f\right])
}{\rho^{1/2}}  -
 f\frac{
{\sf tr} (M\nabla\nabla \left[\rho^{1/2}\right])
}{\rho^{1/2}}  \right) .
\end{equation}
Since $\rho$ is the Gibbs density, we have:
$$
\rho = Z^{-1}e^{-\beta F}.\quad{\rm Hence}\quad  \rho^{1/2} = Z^{-1/2}e^{-\tfrac{\beta}{2} F},\quad \nabla \rho^{1/2} = -\left[\frac{\beta}{2}\nabla F\right] \rho^{1/2}.
$$
We will use the fact that
$$
\nabla\nabla[ \rho^{1/2} f] = f\nabla\nabla \rho^{1/2} + [\nabla \rho^{1/2}(\nabla f)^\top + \nabla f(\nabla \rho^{1/2})^\top] + \rho^{1/2}\nabla\nabla f.
$$
Applying  the property ${\sf tr}(AB) = {\sf tr}(BA)$ and recalling that $M$ is symmetric, we obtain:
\begin{align*}
{\sf tr}[M[\nabla \rho^{1/2}(\nabla f)^\top + \nabla f(\nabla \rho^{1/2})^\top]] =& 
{\sf tr}[M\nabla \rho^{1/2}\nabla f^\top] +
 {\sf tr}[M^{\top}\nabla f \nabla \rho^{1/2}]  
 \\
 =&
   {\sf tr}[M\nabla \rho^{1/2}\nabla f^\top]  +   {\sf tr}[\nabla f (\nabla \rho^{1/2})^\top M^{\top}]  \\
 =&
      {\sf tr}[M\nabla \rho^{1/2}\nabla f^\top] +{\sf tr}[\nabla f (M\nabla \rho^{1/2})^\top]  \\
 =&
      {\sf tr}[M\nabla \rho^{1/2}\nabla f^\top] +{\sf tr}[(M\nabla \rho^{1/2})\nabla f ^\top]  \\
  =&2     {\sf tr}[M\nabla \rho^{1/2}\nabla f^\top] \\
  =&2(M\nabla \rho^{1/2})^\top\nabla f.
\end{align*}
This yields:
$$
 \frac{1}{2}\left(\frac{
{\sf tr} (M\nabla\nabla \left[\rho^{1/2}f\right])
}{\rho^{1/2}}  -
 f\frac{
{\sf tr} (M\nabla\nabla \left[\rho^{1/2}\right])
}{\rho^{1/2}}  \right) 
=
\frac{1}{2}\left({\sf tr}[M\nabla\nabla f] -\beta (M\nabla F)^\top\nabla f \right).
$$
Note that this is $\sfrac{\beta}{2}$ times the generator \eqref{eqn:Lgen} for the case where the diffusion matrix $M$ is constant. 

{\bf Term 2 in \eqref{eqn:Lea1}.}
Utilizing the fact that
$$
\frac{ [\nabla(f\rho^{1/2})-f\nabla(\rho^{1/2})]}{\rho^{1/2}} =\nabla f,
$$
we simplify the second term in \eqref{eqn:Lea1}:
$$
\frac{\alpha}{2}\left(
\frac{{\sf tr}
\left(M
\left[ 
\nabla\left[\rho^{1-\alpha}f\right]  - 
f\nabla\left[\rho^{1-\alpha}\right] 
\right]   \frac{\nabla|\tilde{M}|^\top}{|\tilde{M}|}
\right)
}{\rho^{1-\alpha}} 
 \right) = 
 \frac{\alpha}{2}\left({\sf tr}\left[M\nabla f \right] \frac{\nabla|\tilde{M}|^\top}{|\tilde{M}|}
\right).
 $$
Let us introduce the notation
\begin{equation}
\label{eqn:RR}
R_k:= \tilde{M}^{-1/2} \frac{\partial \tilde{M}}{\partial x_k}\tilde{M}^{-1/2}
\end{equation}
It follows from Jacobi's formula for the derivative of the determinant that 
\begin{equation}
\label{eqn:Rk}
\frac{1}{|\tilde{M}|}\frac{\partial  |\tilde{M}|}{\partial x_k} = {\sf tr}\left(\tilde{M}^{-1}\frac{\partial \tilde{M}}{\partial x_k}\right) \equiv
 {\sf tr}\left(\tilde{M}^{-1/2}\frac{\partial \tilde{M}}{\partial x_k} \tilde{M}^{-1/2}\right)
 \equiv{\sf tr} R_k.
\end{equation}
Hence, we obtain:
\begin{equation}
\label{eqn:term2}
\frac{1}{4}{\sf tr}\left(M\nabla f [{\sf tr} R_1,\ldots,{\sf tr} R_d]\right)
=
\frac{1}{4}\sum_{k=1}^d\sum_{i=1}^dM_{ki}\frac{\partial f}{\partial x_i}{\sf tr}R_k.
\end{equation}

{\bf Term 3 in \eqref{eqn:Lea1}.}
Finally, we compute
\begin{align}
&\frac{ [\nabla(f\rho^{1/2})-f\nabla(\rho^{1/2})]^\top \omega_1}{\rho^{1/2}} =\nabla f^\top\omega_1,\quad{\rm where}\label{eqn:term3} \\
 \frac{(2\pi\epsilon)^{d/2}}{|\tilde{M}|^{1/2}}\omega_{1,i}(x) &=
  \frac{1}{4\epsilon^2}\int_{\mathcal{B}_{0,\sqrt{\epsilon}}}e^{-\frac{z^\top \tilde{M} z}{2\epsilon} }z_i \left[z^\top [\nabla \tilde{M}(x){z}]z\right]dz,\quad 1\le i\le d.\label{eqn:o11}
\end{align}
To compute the integral in \eqref{eqn:o11}, we do the variable change $t:=\epsilon^{-1/2}\tilde{M}^{1/2}z$. Then 
$$
z_i:=\epsilon^{1/2} e_i^\top \tilde{M}^{-1/2}t,\quad{\rm  where}\quad e_i \quad\text{is the standard unit vector}.
$$
The polynomial in the integrand in \eqref{eqn:o11} resulting from this change is:
\begin{align}
z_i \left[z^\top [\nabla \tilde{M}(x){z}]z \right]&=z_i z^\top\left[\sum_{k=1}^d \frac{\partial \tilde{M}}{\partial x_k}z_k\right]z 
 = \sum_{k=1}^d z_iz^\top \frac{\partial \tilde{M}}{\partial x_k} z z_k\notag \\
& = \epsilon^2 \sum_{k=1}^d e_i^\top \tilde{M}^{-1/2}t t^\top \tilde{M}^{-1/2} \frac{\partial \tilde{M}}{\partial x_k}\tilde{M}^{-1/2} tt^\top \tilde{M}^{-1/2} e_k. \label{z31}
\end{align}
Using the notation $R_k$ introduced in \eqref{eqn:RR} we get: 
\begin{equation}
\label{eqn:o12}
\omega_{1,i}(x) = \frac{1}{4(2\pi)^{d/2}} \sum_{k=1}^d e_i^\top \tilde{M}^{-1/2}\left[
 \int_{\R^d}e^{-\frac{t^2}{2} }t t^\top R_k tt^\top dt 
\right]
 \tilde{M}^{-1/2} e_k.
\end{equation}
First we compute the integral in the square brackets in \eqref{eqn:o12}.
This integral is a $d\times d$ matrix, and its entries are:
$$
(t t^\top R_k tt^\top)_{ij}  = \sum_{l=1}^d\sum_{m=1}^dt_it_l[R_k]_{lm}t_mt_j.
$$
{\bf Case $i=j$:}
Taking into account that in order to produce a nonzero integral, we must have $l=m$ in this case. Hence
\begin{align}
 \int_{\R^d}e^{-\frac{t^2}{2} }(t t^\top R_k tt^\top)_{ii} dt &= 
  \int_{\R^d}e^{-\frac{t^2}{2} } \sum_{l=1}^d[R_k]_{ll}t_l^2t_i^2dt \notag\\
  & =(2\pi)^{d/2}\left(3[R_k]_{ii} + \sum_{l\neq i}R_{ll}\right) =(2\pi)^{d/2}\ \left(2[R_k]_{ii} + {\sf tr}R_k\right).
\label{R1}
\end{align}

{\bf Case $i\neq j$:}
In this case, to produce a nonzero integral, we must have $l=i$ and $m = j$ or the other way around. Hence
\begin{align}
 \int_{\R^d}e^{-\frac{t^2}{2} }(t t^\top R_k tt^\top)_{ii} dt  & = 
  \int_{\R^d}e^{-\frac{t^2}{2} } \left[[R_k]_{ij}+[R_k]_{ji}\right]t_i^2t_j^2dt  \notag\\
  &= (2\pi)^{d/2}\left([R_k]_{ij}+ [R_k]_{ji}\right)= (2\pi)^{d/2}2[R_k]_{ij}\label{R2}
\end{align}
as $R_k$ is symmetric.

Therefore, the integral in the square brackets in \eqref{eqn:o12} is
\begin{equation}
\label{eqn:J}
(2\pi)^{d/2} \left(2R_k + I{\sf tr}R_k\right).
\end{equation}
Plugging this result into \eqref{eqn:o12}  and recalling \eqref{eqn:RR} we obtain:
\begin{align}
\omega_{1,i}(x) &=
\frac{1}{4} \sum_{k=1}^d e_i^\top \tilde{M}^{-1/2}
\left[
2R_k + I{\sf tr}R_k
\right]
 \tilde{M}^{-1/2} e_k\notag \\
 & =  \frac{1}{2}\sum_{k=1}^d e_i^\top \tilde{M}^{-1}\frac{\partial \tilde{M}}{\partial x_k} \tilde{M}^{-1}e_k + 
\frac{1}{4} \sum_{k=1}^de_i^\top \tilde{M}^{-1} {\sf tr}R e_k. \label{eqn:o13}
 \end{align}
Now we recall that 
$$
\frac{\partial \tilde{M}^{-1}}{\partial x_k} = -  \tilde{M}^{-1}\frac{\partial \tilde{M}}{\partial x_k}\tilde{M}^{-1}.
$$
Using this formula, we get:
\begin{equation}
\label{eqn:o14}
\omega_{1,i}(x)  =  -\frac{1}{2}\sum_{k=1}^d \frac{\partial M_{ik}}{\partial x_k}  +
 \frac{ 1 }{4} \sum_{k=1}^dM_{ik}{\sf tr}R_k.
 \end{equation}
 Therefore,
 \begin{equation}
 \label{eqn:term3}
 \nabla f^\top\omega_1 = -\frac{1}{2}\sum_{i=1}^d\sum_{k=1}^d \frac{\partial M_{ik}}{\partial x_k} \frac{\partial f}{\partial x_i}
 + \frac{ 1 }{4} \sum_{i=1}^d\sum_{k=1}^dM_{ik}{\sf tr}R_k\frac{\partial f}{\partial x_i}.
 \end{equation}

{\bf Getting the final result.}
Finally, we plug the calculated terms
in \eqref{eqn:Lea1}. We also use the fact that $M$ is symmetric, i.e. $M_{ik} = M_{ki}$ for $1\le i,k\le d$. We get:
\begin{align}
\lim_{\epsilon\rightarrow 0} L_{\epsilon,\alpha} & = 
\frac{1}{2}\left({\sf tr}[M\nabla\nabla f] -\beta (M\nabla F)^\top\nabla f \right)\notag \\
&+\frac{1}{4}\sum_{k=1}^d\sum_{i=1}^dM_{ki}\frac{\partial f}{\partial x_i}{\sf tr}R_k\notag\\
&+ \frac{1}{2}\sum_{i=1}^d\sum_{k=1}^d \frac{\partial M_{ik}}{\partial x_k} \frac{\partial f}{\partial x_i}
 -\frac{ 1 }{4} \sum_{i=1}^d\sum_{k=1}^dM_{ik}{\sf tr}R_k\frac{\partial f}{\partial x_i}\notag \\
 & = \frac{1}{2}\left({\sf tr}[M\nabla\nabla f] -\beta (M\nabla F)^\top\nabla f \right)
 +\frac{1}{2} \left(\nabla\cdot M\right)^\top \nabla f.
 \label{Lea2}
 \end{align}
The last expression is the generator $\mathcal{L}$ for the dynamics in collective variables \eqref{eqn:Lgen} multiplied by $\sfrac{\beta}{2}$ as desired.

\end{proof}


\section{Obtaining the reactive current and the reaction rate from the committor computed by {\tt mmap}}
\label{appendix:current}
We have computed the reactive current based on the Gamma operator
defined for an Ito diffusion with generator $\mathcal{L}$ as
\begin{equation}
    \Gamma(f,g)(x) = \frac{1}{2}\big(\mathcal{L}(fg)(x) - f\mathcal{L}g(x) - g\mathcal{L}f(x)).
\end{equation}
This operator is sometimes referred to as the 
\emph{carr\'{e} du champ}
operator~\cite{pavliotis2014stochastic,bakry2013analysis}. 
We apply it to the discrete generator matrix
$L$ from {\tt mmap}.

Applying the Gamma operator to the generator $\mathcal{L}$ of~\eqref{eqn:mgen} gives
\begin{equation}
    \label{eqn:chain_rule}
   \mathcal{L}(fg) = f \mathcal{L} g + g \mathcal{L}f + 2\beta^{-1} \nabla f^{\top}M \nabla g,
\end{equation}
and hence $\Gamma(f, g)$ simplifies to 
\begin{equation}
     \Gamma(f,g)(x) = \beta^{-1}\nabla f^{\top} M \nabla g(x).
\end{equation}
Choosing $f(x)$ to be the committor $q(x)$ and $g(x)$ to be $\chi_{\nu}:\R^d\rightarrow\R$, mapping $x$ to its $\nu$th component, 
we obtain the $\nu$th component of the reactive current by:
\begin{equation}
    \label{eqn:coord_carre}
    \rho(x)\Gamma(q,\chi_{\nu})(x) = \beta^{-1}\rho(x)[M\nabla q(x)]_{\nu}.
\end{equation}
Therefore, in order to obtain the reactive current
discretized to a dataset, we need to construct a discrete counterpart of the Gamma operator and obtain an estimate for the density $\rho$.

We recall that the discrete generator $L$ approximates $\frac{\beta}{2}\mathcal{L}$ pointwise on a dataset $\{x_i\}_{i=1}^{n}.$
Let $f$ and $g$ be arbitrary smooth functions and $[f], [g] \in \R^{n}$ be their discretization to the dataset, i.e., $[f]_i = f(x_i)$ and $[g]_i=g(x_i)$.
Since $L$ approximates $\frac{\beta}{2}\mathcal{L}$ pointwise on the dataset, we have:
\begin{equation}
\label{eq:D-4}
\sum\limits_{j} L_{ij}([f]_j [g]_j) - [f]_i L_{ij} [g]_j -
[g]_i L_{ij} [f]_j \approx \beta \Gamma(f, g)(x_i) =  \nabla
f^{\top} M \nabla g(x_i).
\end{equation}
Since the row sums of the matrix $L$  are zeros,  the left-hand side of \eqref{eq:D-4} can be written as
 \begin{equation}
\sum\limits_{j} L_{ij}([f]_j [g]_j) - [f]_i L_{ij} [g]_j - [g]_i L_{ij} [f]_j = \sum\limits_{j} L_{ij}([f]_i - [f]_j) ([g]_i - [g]_j).
\end{equation}
This allows us to define the discrete analogue of the Gamma operator by:
\begin{equation}
[\hat{\Gamma}(f, g)]_i := \beta^{-1}\sum\limits_{j=1}^n L_{ij}([f]_i - [f]_j) ([g]_i - [g]_j).
\end{equation}

Now, it remains to obtain an estimate for the density $\rho$. We proceed as follows.
First we construct an isotropic Gaussian kernel 
$[\tilde{k}_{\tilde{\epsilon}}]_{ij} = \exp[-||x_i - x_j||^2/(2\tilde{\epsilon})]$.
Setting $M(x) \equiv I$ and $f(x)\equiv 1$ in the kernel expansion~\eqref{eqn:Ge1}, we observe that
\begin{equation}
\lim\limits_{n\to \infty} \frac{1}{n(2\pi \tilde{\epsilon})^{d/2}} \sum\limits_{j=1}^n [\tilde{k}_{\tilde{\epsilon}}]_{ij} = \rho(x) + \mathcal{O}(\tilde{\epsilon}).
\end{equation}
So, we define a kernel density estimate with the vector $[p] \in \R^{n}$ defined by
\begin{equation}
[p]_i= 
 \frac{1}{n(2\pi \tilde{\epsilon})^{d/2}} \sum\limits_{j=1}^n [\tilde{k}_{\tilde{\epsilon}}]_{ij} \qquad i = 1,\ldots, n.
 \end{equation}
Alternatively, we can use the Mahalanobis kernel from {\tt mmap} with entries $[k_{\epsilon}]_{ij}$ and utilize~\eqref{eqn:Ge1} to define $[p]_i:= (n(2\pi \epsilon)^{d/2}|M(x_i)|^{1/2})^{-1} \sum_j [k_{\epsilon}]_{ij}$ for $i = 1,\ldots, n.$

Let  $[q] \in \R^{n}$ be the discrete committor obtained by {\tt mmap}.
Using the constructed discrete density $[p]$,  we estimate the reactive current using the formula
\begin{equation}
    \label{eqn:discrete_current}
    [\hat{\mathcal{J}}]_{\nu i} := \big[p\hat{\Gamma}\big([q], x^{\nu}\big)\big]_i= \beta^{-1}[p]_i \sum\limits_{j=1}^n L_{ij}([q]_i - [q]_j)(x_{i}^{\nu} -
    x_{j}^{\nu})
\end{equation}
where $1\le\nu\le d$, $1\le i\le n,$ and
$x_{i}^{\nu},$ 
    $x_{j}^{\nu}$ denote the $\nu$-th coordinate for data points $i$ and $j$ respectively.

The reaction rate $\nu_{AB}$ is given by~\eqref{eqn:nuAB} and can be rewritten as~\cite{EVE2010}
\begin{equation}
\nu_{AB} = \beta^{-1} \int_{\mathcal{M} \backslash (A \cup B)} \nabla q(x)^{\top} M(x) \nabla q(x) \rho(x) dx.
\end{equation}
Observing that $\beta^{-1} \nabla q(x)^{\top} M(x) \nabla q(x) \equiv \Gamma(q,q)$, we get:
\begin{equation}
    \label{eqn:rate_gamma}
\nu_{AB} = \int_{\mathcal{M} \backslash (A \cup B)} \Gamma(q, q)(x) \rho(x)dx.
\end{equation}
Hence, we compute an estimate $\hat{\nu}_{AB}$ as the Monte Carlo integral
\begin{equation}
    \label{eqn:rate_gamma}
\hat{\nu}_{AB} = \frac{1}{|I_{AB}|} \sum\limits_{i \in I_{AB}} \big[\hat{\Gamma}\big([q], [q]\big)\big]_i = \frac{1}{|I_{AB}|} \sum\limits_{i \in I_{AB}}\sum\limits_{j=1}^n   L_{ij}([q]_i - [q]_j)^2,
\end{equation}
where $I_{AB} = \{i: x_i \in \mathcal{M}
\backslash (A \cup B)\}$ .
\bibliographystyle{abbrv} 


\end{document}